\DeclareMathOperator\Dyn{Dyn}
\newcommand{\minus}{\mathchoice{\!-\!}{\!-\!}{\raise0.7pt\hbox{$\scriptscriptstyle-$}\scriptstyle}{-}}
\newcommand{\plus}{\mathchoice{\!+\!}{\!+\!}{\raise0.7pt\hbox{$\scriptscriptstyle+$}\scriptstyle}{+}}
\newcommand{\newintegeri}[2]{
	\expandafter\def\csname #1\endcsname{#2}
	\expandafter\def\csname #1o\endcsname{{#2\minus1}}
	\expandafter\def\csname #1t\endcsname{{#2\minus2}}
	\expandafter\def\csname #1p\endcsname{{#2\plus1}}
	\expandafter\def\csname #1pp\endcsname{{#2\plus2}}}
\newcommand{\newintegerii}[1]{\newintegeri{#1#1}{#1}}
\renewcommand{\aa}[2]{a_{#1,#2}}
\newcommand{\aainv}[2]{a_{#1,#2}\inv}
\newcommand{\ABKL}[1]{\Sigma_{#1}^{\scriptstyle*}}
\newcommand{\ABKLP}[1]{\Sigma_{#1}\bidual}
\newcommand{\ABP}[1]{\Sigma_{#1}}
\newcommand{\ABPP}[1]{\Sigma_{#1}^{\raise0.8pt\hbox{$\scriptscriptstyle+$}}}
\newcommand{\app}{\triangleleft}
\newcommand{\BB}[1]{B_{#1}}
\newcommand{\bidual}{^{\hspace{-0.05em}\raise0.6pt\hbox{$\scriptscriptstyle+$}\hspace{-0.05em}*}}
\newcommand{\BKL}[1]{B_{#1}\bidual}
\newcommand{\BP}[1]{B_{#1}\plusexp}
\newcommand{\br}{\beta}
\newcommand{\brbr}{\gamma}
\newcommand{\card}[1]{\mathrm{card}\left(#1\right)}
\newcommand{\ew}{\varepsilon}
\renewcommand{\geq}{\geqslant}
\newcommand{\geo}[3]{g(#1,#2;#3)}
\newcommand{\hash}{\text{hash}}
\newcommand\HH{\hspace{1em}}
\newcommand{\ie}{\emph{i.e.}}
\newcommand{\inv}{^{\minus\hspace{-0.1em}1}}
\renewcommand{\leq}{\leqslant}
\newcommand{\len}[2]{\vert #2\vert_#1}
\newcommand{\napp}{\ntriangleleft}
\newcommand\NN{\mathbb{N}}
\newcommand{\one}[1]{\mathbb{1}}
\newcommand{\plusminus}{\mathchoice{\pm}{\pm}{\raise0.7pt\hbox{$\scriptscriptstyle\pm$}\scriptstyle}{\pm}}
\newcommand{\plusexp}{^{\raise0.8pt\hbox{$\hspace{-0.05em}\scriptscriptstyle+$}}}
\newcommand{\red}{\text{red}}
\newcommand\rem{\text{rem}}
\newcommand\resp{\emph{resp.~}}
\newcommand\RR[1]{\mathrm{R}_#1}
\newcommand{\sig}[1]{\sigma_{\!#1}} 
\newcommand{\siginv}[1]{\sigma_{\!#1}^{\hspace{-0.05em}\raise0.8pt\hbox{$\scriptscriptstyle-$}\hspace{-0.1em}1}}
\newcommand{\sigpm}[1]{\sigma_{\!#1}^{\hspace{-0.05em}\raise0.8pt\hbox{$\scriptscriptstyle\pm$}\hspace{-0.1em}1}}
\newcommand{\sph}[3]{s(#1,#2;#3)}
\newcommand{\SSS}[1]{S_{#1}}
\newcommand{\Sym}[1]{\mathfrak{S}_#1}
\newcommand{\TT}[1]{T_{#1}}
\newcommand{\uu}{u}
\newcommand{\vv}{v}
\newcommand{\ww}{w}
\newcommand{\WW}{W}
\newcommand{\xx}{x}
\newcommand{\yy}{y}
\newcommand{\ZZ}{\mathbb{Z}}
\theoremstyle{definition}
\newtheorem{defi}{Definition}[section]
\newtheorem{nota}[defi]{Notation}
\newtheorem*{rema}{Remark}
\newtheorem{exam}[defi]{Example}
\theoremstyle{plain}
\newtheorem{lemm}[defi]{Lemma}
\newtheorem{prop}[defi]{Proposition}
\newtheorem{coro}[defi]{Corollary}
\newtheorem{conj}[defi]{Conjecture}
\title{Experiments on growth series of braid groups}
\author[1]{Jean Fromentin}
\address{Univ. Littoral Côte d’Opale, UR 2597, LMPA, Laboratoire de Mathématiques Pures et Appliquées Joseph Liouville, F-62100 Calais, France}
\email{fromentin@math.cnrs.fr}
\keywords{Braid group, spherical growth series, geodesic growth series, algorithm}
\subjclass[2020]{Primary 20F36, 20F10; Secondary 20F69, 68R15}
\dedicatory{In memory of Patrick Dehornoy, a great mentor.}
\begin{document}

\begin{abstract}
We introduce an algorithmic framework to investigate spherical and geodesic growth series of braid groups relatively to the Artin's or Birman--Ko--Lee's generators. We  present our experimentations in the case of three and four strands and conjecture rational expressions for the spherical growth series with respect to the Birman--Ko--Lee's generators.  
\end{abstract}

\maketitle

\section{Introduction}

Originally introduced as the group of isotopy classes of $n$-strands geometric braids, 
the braid group $\BB\nn$ admits many finite presentations by generators and relations.
From each finite semigroup generating set $S$ of $\BB\nn$ we can define at least two growth series.
The spherical growth series counts elements of $\BB\nn$ by their distance from the identity in the Cayley 
graph $\text{Cay}(\BB\nn,S)$ of~$\BB\nn$ with respect to~$S$.
The geodesic growth series counts geodesic paths starting from the identity by length in $\text{Cay}(\BB\nn,S)$.

In case of Artin's generators $\Sigma_n=\{\sigpm1,\ldots,\sigpm\nno\}$ of $\BB\nn$
 the only known significant results are for $n\leq3$.
L. Sabalka determines \cite{Sabalka} both the spherical and geodesic growth series of $\BB3$.
To this end, he constructs an explicit deterministic finite automaton recognizing the language of
geodesic $\Sigma_3$-words. 
In particular he obtains the rationality of both series.  
Similar results were obtained by J. Mairesse and F. Mathéus in case of Artin--Tits groups of dihedral 
type~\cite{Mairesse}.
In page~57 of her PhD thesis~\cite{Albenque-thesis},  M. Albenque gives the first $13$ terms of the spherical series of~$\BB4$ relatively to $\Sigma_4$.

Here we introduce a new algorithmic framework to compute the first terms of the spherical and geodesic growth series of $\BB\nn$ relatively to both  Artin's or Birman--Ko--Lee's generators. 
Experimentations allow us to conjecture rational expressions for the spherical growth series of $\BB3$ and $\BB4$ and geodesic growth series of $\BB3$ relatively to the Birman--Ko--Lee's generators.
We also obtain the first $26$ terms of the spherical and geodesic growth series of $\BB4$ with respect to~$\Sigma_4$ but this is not enough to formulate any conjecture in this case.
Experiments presented in this paper were carried out using the \texttt{CALCULCO} computing platform~\cite{Calculco}.

The paper is organized as follows.
Section~\ref{S:Context} recalls basic definitions and presents already known result on the subject.
In section~\ref{S:Counting} we describe a first algorithm to explore spherical and geodesic combinatorics of braids relatively to Artin's or Birman--Ko--Lee generators.
Section~\ref{S:Template} is devoted to the notion of braid template which allows us to parallelize the algorithms obtained in the previous section. 
In section~\ref{S:Reduced} we show how to reduce the exploration space by introducing reduced braid templates.
Experimentation results are detailed in the last section.

\section{Context}

\label{S:Context}
\subsection{Growth series}

Let $S$ be a finite generating set of a semigroup $M$. 
We denote by $S^\ast$ the set of all words on the alphabet $S$, which are called $S$-words.
The empty word is denoted by $\ew$.
For every $S$-word $u$, we denote by $|u|$ its length and  by $\overline{u}$ the element of $M$ it represents.
We say that two $S$-words $\uu$ and $\vv$ are equivalent, denoted $\uu\equiv\vv$, is they represent the same element in $M$.

\begin{defi}
The \emph{S-length} of an element $x\in M$, denoted $\len{S}x$, is the length of a shortest $S$-word representing $x$. 
An $S$-word $\uu$ satisfying $|\uu|=\len{S}{\overline{\uu}}$ is \emph{geodesic}.
\end{defi}

The $S$-length of an element $x\in M$ corresponds to the distance between $x$ and the identity in the Cayley graph of $M$ with respect to the finite generating set $S$.

\begin{defi}
For any $\ell\in\NN$, we denote by $\geo{M}S\ell$ the number of geodesic $S$-words of length $\ell$.
The \emph{geodesic} growth series of $M$ with respect to $S$ is 
\[
\mathcal{G}(M,S)=\sum_{\ell\in\NN} g(M,S;\ell)\, t^\ell.
\]
\end{defi}

If the language of geodesic $S$-words is regular then the series $\mathcal{G}(M,S)$ is rational.

\begin{defi}
For any $\ell\in\NN$, we denote by $\sph{M}S\ell$ the number of elements in~$M$ of length $\ell$.
The \emph{spherical} growth series of $M$ with respect to $S$ is 
\[
 \mathcal{S}(M,S)=\sum_{x \in M} t^{\len{S}x}=\sum_{\ell\in \NN} \sph{M}S\ell t^\ell.
\]
\end{defi}

If there exists a regular language composed of geodesic $S$-words in bijection with~$M$ then the series $\mathcal{S}(M,S)$ is rational.

\subsection{Artin's braid presentation}
The first presentation of the braid group $\BB\nn$ was given by E. Artin in \cite{Artin47} :

\begin{equation}
\label{E:PBn}
\BB\nn\simeq\left<\sig1,...,\sig\nno\left| \begin{array}{cl} \sig\ii\sig\jj=\sig\jj\,\sig\ii 
& \text{for $|\ii-\jj|\geq 2$} \\ \sig\ii\,\sig\jj\,\sig\ii=\sig\jj\,\sig\ii\,\sig\jj
&\text{for $|\ii-\jj|=1$}\end{array}\right. \right>.
\end{equation}

\begin{defi}
 For all $n\geq 2$, we denote by $\ABPP\nn$  the set $\{\sig1,\ldots,\sig\nno\}$ and by~$\ABP\nn$ the set $\ABPP\nn\sqcup\left(\ABPP\nn\right)\inv$. 
\end{defi}

Artin's presentation of $\BB\nn$ implies that $\ABPP\nn$ is a set of group generators of~$\BB\nn$.
However the braid $\siginv1$ cannot be represented by any $\ABPP\nn$-word.
For our purpose, it is fundamental to view a monoid (or a group) as a quotient of a finitely generated free monoid.
As a monoid, the braid group $\BB\nn$ is presented by generators~$\ABP\nn$ and the relations of \eqref{E:PBn} plus relations 
\begin{equation}
\label{E:Pssinv}
 \sig\ii\,\siginv\ii=\siginv\ii\,\sig\ii=\ew\quad \text{for all $1\leq \ii \leq \nno$.}
\end{equation}

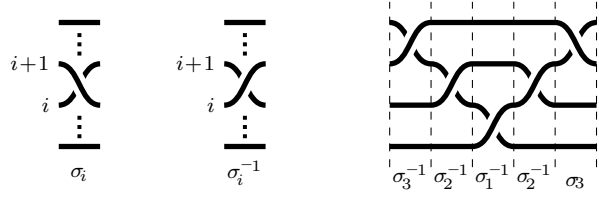
\begin{figure}[h!]
\begin{center}
    \begin{tikzpicture}[x=0.055cm,y=0.055cm]
      \draw[line width=2](0,0) -- (10,0);
      \draw[line width=2](0,10) .. controls (5,10) and (5,20) .. (10,20);
      \draw[line width=2](0,20) .. controls (5,20) and (5,10) .. (10,10);
   \draw[line width=6,color=white](0,20) .. controls (5,20) and (5,10) .. (10,10);
         \draw[line width=2](0,20) .. controls (5,20) and (5,10) .. (10,10);
      \draw[line width=2](0,30) -- (10,30);
      \draw[line width=1.5,dotted](5,2) -- (5,8); 
      \draw[line width=1.5,dotted](5,22) -- (5,28); 
      \draw(0,10) node[left]{\footnotesize$\ii$};
      \draw(0,20) node[left]{\footnotesize$\iip$};
      \draw(5,-3) node[below]{\footnotesize$\sig\ii$};
      \begin{scope}[shift={(40,0)}]
        \draw[line width=2](0,0) -- (10,0);
        \draw[line width=2](0,10) .. controls (5,10) and (5,20) .. (10,20);
        \draw[line width=2](0,20) .. controls (5,20) and (5,10) .. (10,10);
        \draw[line width=6,color=white](0,10) .. controls (5,10) and (5,20) .. (10,20);
        \draw[line width=2](0,10) .. controls (5,10) and (5,20) .. (10,20);
        \draw[line width=2](0,30) -- (10,30);
        \draw[line width=1.5,dotted](5,2) -- (5,8); 
        \draw[line width=1.5,dotted](5,22) -- (5,28); 
        \draw(0,10) node[left]{\footnotesize$\ii$};
        \draw(0,20) node[left]{\footnotesize$\iip$};
        \draw(5,-1) node[below]{\footnotesize$\siginv\ii$};
      \end{scope}
      \begin{scope}[shift={(80,0)}]
        \draw[line width=2](0,30) .. controls (5,30) and (5,20) .. (10,20) .. controls (15,20) and (15,10) .. (20,10) .. controls (25,10) and (25,0) .. (30,0) -- (50,0);
        \draw[line width=6,color=white](0,10) -- (10,10) .. controls (15,10) and (15,20) .. (20,20) -- (30,20) .. controls (35,20) and (35,10) .. (40,10) -- (50,10);
        \draw[line width=2](0,10) -- (10,10) .. controls (15,10) and (15,20) .. (20,20) -- (30,20) .. controls (35,20) and (35,10) .. (40,10) -- (50,10);
        \draw[line width=6,color=white] (0,0) -- (20,0) .. controls (25,0) and (25,10) .. (30,10) .. controls (35,10) and (35,20) .. (40,20) .. controls (45,20) and (45,30) .. (50,30);
        \draw[line width=2] (0,0) -- (20,0) .. controls (25,0) and (25,10) .. (30,10) .. controls (35,10) and (35,20) .. (40,20) .. controls (45,20) and (45,30) .. (50,30);
        \draw[line width=6,color=white] (0,20) .. controls (5,20) and (5,30) .. (10,30) -- (40,30) .. controls (45,30) and (45,20) .. (50,20);
        \draw[line width=2] (0,20) .. controls (5,20) and (5,30) .. (10,30) -- (40,30) .. controls (45,30) and (45,20) .. (50,20);
        \draw[dashed](0,-5) -- (0,35);
        \draw[dashed](10,-5) -- (10,35);
        \draw[dashed](20,-5) -- (20,35);
        \draw[dashed](30,-5) -- (30,35);
        \draw[dashed](40,-5) -- (40,35);
        \draw[dashed](50,-5) -- (50,35);
        \draw(5,-2) node[below]{\footnotesize$\siginv3$};
        \draw(15,-2) node[below]{\footnotesize$\siginv2$};
        \draw(25,-2) node[below]{\footnotesize$\siginv1$};
        \draw(35,-2) node[below]{\footnotesize$\siginv2$};
        \draw(45,-5) node[below]{\footnotesize$\sig3$};
      \end{scope}
      \end{tikzpicture}
      \caption{Geometric interpretation of Artin's generators and representation of a $4$-strands braid as a $\ABP4$-word.}
      \end{center}
      \label{F:mots} 
\end{figure}

In \cite{Sabalka}, L. Sabalka constructed an explicit deterministic finite state automaton recognizing the language of geodesic $\ABP3$-words. 
He obtained the following rational value for the geodesic growth series of $\BB3$ relatively to the Artin's generators $\ABP3$ :
\begin{equation}
\mathcal{G}(\BB3,\ABP3)=\frac{t^4+3t^3+t+1}{(t^2+2t-1)(t^2+t-1)}.
\end{equation}
Moreover, using the finite state automaton recognizing the language of short-lex normal form of $\BB3$ \cite{EpsteinHoltRees} he obtains :
\begin{equation}
\mathcal{S}(\BB3,\ABP3)=\frac{(t+1)(2t^3-t^2+t-1)}{(t-1)(2t-1)(t^2+t-1)}.
\end{equation}

The positive braid monoid $\BP\nn$ is the submonoid of $\BB\nn$ generated by $\ABPP\nn$.
Since every $\ABPP\nn$-word is geodesic, the geodesic growth series $\mathcal{G}(\BP\nn,\ABPP\nn)$ is irrelevant.
An explicit rational formula for the spherical growth series $\mathcal{S}(\BP\nn,\ABPP\nn)$ was obtained by A. Bronfman in \cite{Bronfman} and later by M. Albenque in \cite{Albenque-bpn}. 
These results were extended to positive braid monoids of type \texttt{B} and \texttt{D} in \cite{AlbenqueNadeau} and for each Artin--Tits monoids of spherical type in~\cite{FloresMeneses-combi}.

\subsection{Dual's braid presentation}

In \cite{BKL}, J. Birman, K.\,H.~Ko and S.\,J.~Lee introduced a new generator family of $\BB\nn$, called Birman-Ko-Lee's or dual generators.

\begin{defi}
\label{D:apq}
 For $1\leq \indi< \indii \leq n$ we  define $\aa\indi\indii$ to be the braid
 \begin{equation}
 \label{E:apq}
\aa\indi\indii=\sig\indi\ldots\sig\indiit\ \sig\indiio\ \siginv\indiit\ldots\siginv\indi.
 \end{equation}
For all $n\geq 2$, we put $\ABKLP\nn=\{\aa\indi\indii\,|\,1\leq\indi<\indii\leq n\}$ and $\ABKL\nn=\ABKLP\nn\sqcup\left(\ABKLP\nn\right)\inv$.
\end{defi}

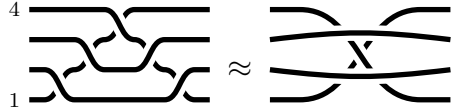
\begin{figure}[h!]
  \begin{center}
    \begin{tikzpicture}[x=0.040cm,y=0.040cm]
      \draw[line width=2] (-5,0) -- (0,0) .. controls (5,0) and (5,10) .. (10,10) .. controls (15,10) and (15,20) .. (20,20) .. controls (25,20) and (25,30) .. (30,30) -- (55,30); 
      \draw[line width=6,color=white] (-5,30) -- (20,30) .. controls (25,30) and (25,20) .. (30,20) .. controls (35,20) and (35,10) .. (40,10) .. controls (45,10) and (45,0) .. (50,0) -- (55,0); 
      \draw[line width=2] (-5,30) -- (20,30) .. controls (25,30) and (25,20) .. (30,20) .. controls (35,20) and (35,10) .. (40,10) .. controls (45,10) and (45,0) .. (50,0) -- (55,0); 
      \draw[line width=6,color=white] (-5,10) -- (0,10) .. controls (5,10) and (5,0) .. (10,0) -- (40,0) .. controls (45,0) and (45,10) .. (50,10) -- (55,10);
      \draw[line width=2] (-5,10) -- (0,10) .. controls (5,10) and (5,0) .. (10,0) -- (40,0) .. controls (45,0) and (45,10) .. (50,10) -- (55,10);
      \draw[line width=6,color=white] (-5,20) -- (10,20) .. controls (15,20) and (15,10) .. (20,10) -- (30,10) .. controls (35,10) and (35,20) .. (40,20) -- (55,20);
      \draw[line width=2] (-5,20) -- (10,20) .. controls (15,20) and (15,10) .. (20,10) -- (30,10) .. controls (35,10) and (35,20) .. (40,20) -- (55,20);
      \draw(-5,0) node[left]{\footnotesize$1$};
      \draw(-5,30) node[left]{\footnotesize$4$};
      \draw(65,10) node{\Large$\approx$};
      \begin{scope}[shift={(75,0)}]
        \draw[line width=2](0,0) -- (10,0) .. controls (30,0) and (30,30) .. (50,30) -- (60,30); 
        \draw[line width=6,color=white](0,30) -- (10,30) .. controls (30,30) and (30,0) .. (50,0) -- (60,0); 
        \draw[line width=2](0,30) -- (10,30) .. controls (30,30) and (30,0) .. (50,0) -- (60,0); 
        \draw[line width=6,color=white](0,10) .. controls (25,7) and (35,7) .. (60,10);
        \draw[line width=2](0,10) .. controls (25,7) and (35,7) .. (60,10);
        \draw[line width=6,color=white](0,20) .. controls (25,23) and (35,23) ..  (60,20);
        \draw[line width=2](0,20)  .. controls (25,23) and (35,23) ..  (60,20);
      \end{scope}
    \end{tikzpicture}
  \end{center}
  \caption{The letter $\aa14$ codes for the braid in which strands $1$ and $4$ cross under strands $2$ and $3$.}
  \label{F:apq}
\end{figure}

We write $[\indi,\indii]$ for the interval $\{\indi,\ldots , \indii\}$ of $\NN$, and we say that $[\indi, \indii]$ is nested in~$[\indiii, \indiv]$ if we have $\indiii < \indi < \indii < \indiv$.

\begin{lemm}\cite{BKL}
In terms of $\ABKLP\nn$, the group $\BB\nn$ is presented by the relations
\begin{align}
\label{E:DualCommutativeRelation}
\aa\indi\indii\aa\indiii\indiv&= \aa\indiii\indiv\aa\indi\indii \text{\quad for $[\indi, \indii]$ and $[\indiii, \indiv]$ disjoint or nested},\\
\label{E:DualNonCommutativeRelation}
\aa\indi\indii\aa\indii\indiii&= \aa\indii\indiii\aa\indi\indiii =\aa\indi\indiii\aa\indi\indii \text{\quad for $1 \le \indi<\indii<\indiii\le\nn$}.
\end{align}
\end{lemm}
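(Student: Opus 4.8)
The plan is to realise the isomorphism through two mutually inverse homomorphisms. Let $G$ denote the group abstractly presented by the generators $\{\aa\indi\indii : 1\le\indi<\indii\le\nn\}$ subject to \eqref{E:DualCommutativeRelation} and \eqref{E:DualNonCommutativeRelation}; the claim amounts to $G\cong\BB\nn$. I would introduce $\phi\colon G\to\BB\nn$ sending each generator $\aa\indi\indii$ to the braid defined by \eqref{E:apq}, and $\psi\colon\BB\nn\to G$ sending $\sig\ii$ to $\aa\ii\iip$. Because the two products in \eqref{E:apq} are empty when $\indii=\indi\plus1$, the braid $\aa\ii\iip$ equals $\sig\ii$, so $\phi\circ\psi$ fixes every $\sig\ii$ and hence equals $\mathrm{id}_{\BB\nn}$ as soon as $\phi$ and $\psi$ are known to be homomorphisms. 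Everything therefore reduces to checking that $\phi$ and $\psi$ are well defined and that $\psi\circ\phi=\mathrm{id}_G$.

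First I would verify that $\psi$ is well defined, that is, that Artin's relations \eqref{E:PBn} survive the substitution $\sig\ii\mapsto\aa\ii\iip$. When $|\ii\minus\jj|\ge2$ the intervals $[\ii,\iip]$ and $[\jj,\jjp]$ are disjoint, so $\aa\ii\iip$ and $\aa\jj\jjp$ commute by \eqref{E:DualCommutativeRelation}. For the braid relation, set $x=\aa\ii\iip$, $y=\aa\iip\iipp$ and $z=\aa\ii\iipp$; relation \eqref{E:DualNonCommutativeRelation} applied to $(\indi,\indii,\indiii)=(\ii,\iip,\iipp)$ reads $xy=yz=zx$, whence
\[
xyx=(xy)x=(yz)x=y(zx)=y(xy)=yxy,
\]
which is exactly the image under $\psi$ of $\sig\ii\sig\iip\sig\ii=\sig\iip\sig\ii\sig\iip$.

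Next I would establish $\psi\circ\phi=\mathrm{id}_G$, for which it suffices to prove $\psi(\phi(\aa\indi\indii))=\aa\indi\indii$ in $G$ for each generator. As $\phi(\aa\indi\indii)$ is the Artin word \eqref{E:apq}, its $\psi$-image is the $G$-word $W_{\indi,\indii}=\aa\indi\indip\cdots\aa\indiit\indiio\,\aa\indiio\indii\,\aainv\indiit\indiio\cdots\aainv\indi\indip$, and I would argue by induction on $\indii\minus\indi$. The case $\indii=\indi\plus1$ is immediate, and otherwise $W_{\indi,\indii}=\aa\indi\indip\,W_{\indip,\indii}\,\aainv\indi\indip$ with $W_{\indip,\indii}=\aa\indip\indii$ by the induction hypothesis; relation \eqref{E:DualNonCommutativeRelation} for $(\indi,\indip,\indii)$ gives $\aa\indi\indip\aa\indip\indii=\aa\indi\indii\aa\indi\indip$, so that $W_{\indi,\indii}=\aa\indi\indip\aa\indip\indii\aainv\indi\indip=\aa\indi\indii$, completing the induction.

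The main obstacle is the well-definedness of $\phi$, namely checking that \eqref{E:DualCommutativeRelation} and \eqref{E:DualNonCommutativeRelation} genuinely hold in $\BB\nn$ once each letter $\aa\indi\indii$ is read as the braid \eqref{E:apq}; this is a direct computation inside the Artin presentation \eqref{E:PBn}. The disjoint case of \eqref{E:DualCommutativeRelation} is again immediate, since the two braids then involve Artin generators whose indices differ by at least two. The nested case of \eqref{E:DualCommutativeRelation} and the two equalities of \eqref{E:DualNonCommutativeRelation} are the delicate computations: here the supports of the bands overlap, and I would treat them by conjugation bookkeeping, pushing the outer factors $\sigpm\indi$ of one band through the other and cancelling, reducing each identity to repeated applications of the braid and commutation relations of \eqref{E:PBn}. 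Once $\phi$ and $\psi$ are both homomorphisms with $\phi\circ\psi=\mathrm{id}_{\BB\nn}$ and $\psi\circ\phi=\mathrm{id}_G$, they are mutually inverse isomorphisms and the lemma follows.
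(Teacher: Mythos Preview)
The paper does not supply its own proof of this lemma: it is quoted from \cite{BKL}, and the only accompanying remark concerns a minor difference in conventions for the definition of $\aa\indi\indii$. There is therefore no argument in the paper against which to compare yours.

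Your outline is the standard route for establishing that two presentations define isomorphic groups, and the steps you do carry out (well-definedness of $\psi$, the inductive computation of $\psi\circ\phi$ on generators) are correct. The only caveat is that you stop short of the actual verifications in the hardest part, namely the nested case of \eqref{E:DualCommutativeRelation} and the identities \eqref{E:DualNonCommutativeRelation} inside $\BB\nn$; saying you would handle them ``by conjugation bookkeeping'' is accurate in spirit but is a promise rather than a proof. If you intend this as a self-contained argument rather than a pointer to \cite{BKL}, those computations need to be written out.
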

Note that the definition of $\aa\indi\indii$ given here is not exactly that of \cite{BKL} but it is coherent with previous papers of the author. 

As for Artin's generators, the braid group $\BB\nn$ admits a monoid presentation with generators $\ABKL\nn$, relations \eqref{E:DualCommutativeRelation} and \eqref{E:DualNonCommutativeRelation} together with
\begin{equation}
\label{E:DualInverses}
 \aa\indi\indii\,\aainv\indi\indii=\aainv\indi\indii\,\aa\indi\indii=\ew\quad \text{for all $1\leq \indi<\indii\leq \nn$.}
\end{equation}

Except in the case $n=2$, which is trivial, there are no results in the literature on the growth series of~$\BB\nn$ with respect to $\ABKL\nn$.

The Birman--Ko--Lee monoid~$\BKL\nn$, also called dual braid monoid in \cite{Bessis} is the submonoid of~$\BB\nn$ generated by $\ABKLP\nn$.
The term \emph{dual} was used by D. Bessis since the Garside structure of $\BP\nn$ and $\BKL\nn$ share symmetric combinatorial values.
In~\cite{AlbenqueNadeau}, M. Albenque and P. Nadeau give a rational expression for the spherical growth series~$\mathcal{S}(\BKL\nn,\ABKLP\nn)$; they also treat the case of dual braid monoids of type~\textbf{B}.

\subsection{Some words about Garside presentations}

The two monoids $\BP\nn$ and $\BKL\nn$ equip the braid group $\BB\nn$ with two Garside structures : the classical one~\cite{Garside} and the dual one~\cite{BKL,Bessis}. 
The reader can consult~\cite{DehornoyParis} and \cite{DehornoyAl} for a general introduction to Garside theory.
Here it is sufficient to  know that each Garside structure provides \emph{simple} elements which generate the corresponding Garside monoid.
Let us denote by $C_n$ and $D_n$ the simple elements of the Garisde monoid $\BP\nn$ and $\BKL\nn$ respectively.

In \cite{Dehornoy-combinatorics}, P. Dehornoy starts the study of the spherical combinatorics of~$\BP\nn$ relatively to $C_n$. In particular he formulates a divisibility conjecture which has been proven by F. Hivert, J.-C.~Novelli and J.-Y.~Thibon in \cite{HivertNovelliThibon}. 
A similar result was obtained for braid monoids of type \textbf{B} in \cite{FoissyFromentin}.
The spherical combinatorics of $\BKL\nn$ relatively to $D_\nn$ was also considered by P. Biane and P. Dehornoy in~\cite{Biane}: they reduce the computation of $\sph{\BKL\nn}{D_\nn}2$ to that of free cumulants for a product of independent variables.

R. Charney establishes in \cite{Charney} that the spherical growth series of Artin--Tits groups of spherical type with respect to their standard simple elements are rationals. In particular she obtains the rationality of $\mathcal{S}(\BB\nn,C_n)$. This result was generalized for all Garside groups by P. Dehornoy in \cite{DehornoyGarside}.
This implies in particular the rationality of $\mathcal{S}(\BB\nn,D_n)$.

\section{Counting braids}

\label{S:Counting}

We fix an integer $n\geq 2$ and $\SSS\nn$ denotes either $\ABP\nn$ (Artin's generators of $\BB\nn$) or~$\ABP\nn^\ast$ (dual generators of $\BB\nn$).

\begin{defi}
For $\nn\geq 2$ and $\ell\in\NN$ we denote by $\BB\nn(\SSS\nn,\ell)$ the set of braids of~$\BB\nn$ whose $S_\nn$-length is $\ell$.
\end{defi}

Since the equality $s(\BB\nn,S_\nn;\ell)=\card{\BB\nn(\SSS\nn,\ell)}$ holds, we compute $s(\BB\nn,\SSS\nn;\ell)$ by constructing the set $\BB\nn(\SSS\nn,\ell)$.
Each braid of $\BB\nn$ with $\SSS\nn$-length  $\ell$ is the product of a braid of $\SSS\nn$-length~$\llo$ and a generator $x\in \SSS\nn$.
In particular we have 
\begin{equation}
\label{E:B:sub}
\BB\nn(\SSS\nn,\ll)\subseteq \{ \br\cdot x\  \text{for}\ (\br,x)\in\BB\nn(\SSS\nn,\llo)\times\SSS\nn\},
\end{equation}
and so we can construct $\BB\nn(\SSS\nn,\ell)$ by induction on $\ell\geq 1$.

\subsection{Representative sets}

From an algorithmic point of view, a braid is naturally represented by a word.
We extend this notion to any subset of $\BB\nn(\SSS\nn,\ell)$.

\begin{defi}
\label{D:Rep}
We say that a set $\WW$ of $S_\nn$-words represents a subset $X$ of $\BB\nn$ whenever $W$ is a set of unique geodesic representatives for $X$.
\end{defi}

\begin{exam}
For all $\nn\geq 2$, the set $\{\ew\}$ represents $\BB\nn(\SSS\nn,0)$.
Since relations~\eqref{E:PBn},~\eqref{E:Pssinv}, together with relations \eqref{E:DualCommutativeRelation}-\eqref{E:DualInverses} of Artin and dual semigroup presentation of $\BB\nn$ preserve parity of word length we have the following property:
\begin{equation}
\label{E:mod2}
\text{two $S_\nn$-words $\uu$ and $\vv$ are equivalent only if $|u|\equiv|v| \text{ mod } 2$.}
\end{equation}
In particular any $S_\nn$-word of length $\leq 1$ is geodesic. 
As two different letters of~$\SSS\nn$ represent different braids of $\BB\nn$ the set $\SSS\nn$ represents $\BB\nn(\SSS\nn,1)$.
\end{exam}

The previous example gives a representative set of~$\BB\nn(\SSS\nn,\ell)$ for $\ell\leq1$.
We now tackle the construction of a representative set $\WW_\ll$ of $\BB\nn(\SSS\nn,\ell)$ for $\ll\geq2$.
Using an inductive argument we can assume we already have obtained a set~$\WW_\llo$ representing~$\BB\nn(\SSS\nn,\llo)$ and then consider the set 
\begin{equation}
\label{E:X}
\WW'=\{\ww\xx\ \text{for}\ (\ww,\xx)\in \WW_\llo\times \SSS\nn\}.
\end{equation}
A first step to obtain $\WW_\ell$ consists in removing all non-geodesic words from~$\WW'$.
For this we have to test if a given word of~$\WW'$ is geodesic or not.
A naive general solution consists in testing if a word $\uu\in\WW'$ is equivalent to a $\SSS\nn$-word of length at most $\llo$.
However, as words of $\WW'$ are obtained by appending a letter to a geodesic word, we can restrict the search space:

\begin{lemm}
\label{L:Geo}
For $\ell\geq 2$, let $\uu$ be a geodesic $S_\nn$-word of length $\ell-1$ and $x$ a letter of $S_\nn$. 
If the $S_\nn$-word $\vv=\uu\xx$ is not geodesic then there exists a geodesic $S_\nn$-word~$\ww$ of length~$\ell-2$ which is equivalent to $\vv$. \end{lemm}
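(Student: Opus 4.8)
The plan is to determine the exact $\SSS\nn$-length of the braid $\overline\vv$ by squeezing it between two bounds, each of which forces it to equal $\llt$; a shortest representative of $\overline\vv$ will then be the desired word $\ww$.

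First I would extract the upper bound from the parity constraint \eqref{E:mod2}. Let $\ww$ be a shortest $\SSS\nn$-word representing $\overline\vv$, so that $\ww\equiv\vv$ and $|\ww|=\len{\SSS\nn}{\overline\vv}$. Since $\vv=\uu\xx$ has length $\ll$ and is assumed non-geodesic, we have $|\ww|<\ll$. By \eqref{E:mod2} the equivalence $\ww\equiv\vv$ forces $|\ww|\equiv\ll\pmod 2$, which rules out the intermediate value $|\ww|=\llo$; hence $\len{\SSS\nn}{\overline\vv}\leq\llt$.

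For the matching lower bound I would use that the generating set $\SSS\nn$ is closed under inverses, whether it is $\ABP\nn$ or $\ABKL\nn$: the inverse $\xx\inv$ of the letter $\xx$ is again a single letter of $\SSS\nn$, so $\len{\SSS\nn}{\overline{\xx\inv}}=1$. From $\overline\vv=\overline\uu\,\overline\xx$ we obtain $\overline\uu=\overline\vv\,\overline{\xx\inv}$, and subadditivity of the length function gives $\len{\SSS\nn}{\overline\uu}\leq\len{\SSS\nn}{\overline\vv}+\len{\SSS\nn}{\overline{\xx\inv}}=\len{\SSS\nn}{\overline\vv}+1$. Since $\uu$ is geodesic of length $\llo$, the left-hand side equals $\llo$, whence $\len{\SSS\nn}{\overline\vv}\geq\llt$.

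Combining the two inequalities yields $\len{\SSS\nn}{\overline\vv}=\llt$, so the shortest representative $\ww$ is a geodesic $\SSS\nn$-word of length $\llt$ equivalent to $\vv$, as claimed. The only delicate point is the lower bound: it genuinely depends on $\xx\inv$ being available as a generator of $\SSS\nn$, a property that holds for both the Artin and the dual generating sets by their very definition, so the argument covers both choices of $\SSS\nn$ uniformly.
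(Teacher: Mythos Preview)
Your proof is correct and follows essentially the same route as the paper: both use the parity constraint \eqref{E:mod2} to get $\len{\SSS\nn}{\overline\vv}\leq\llt$, and then recover the lower bound from $\overline\uu=\overline\vv\,\overline{\xx\inv}$ together with the geodesicity of $\uu$. The only cosmetic difference is that the paper phrases the lower bound via the concrete representative $\ww\xx\inv$ of $\overline\uu$ (so $|\ww\xx\inv|\geq\llo$), whereas you phrase it as subadditivity of the $\SSS\nn$-length; these are the same step.
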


\begin{proof}
Assume~$\vv$ is not geodesic. 
There exists a $S_\nn$-geodesic word $\ww$ equivalent to~$\vv$ and satisfying $|\ww|<|\vv|$.
By \eqref{E:mod2} we must have $|\ww|\leq|\vv|-2=\ell-2$.
From the equality $\vv=\uu\xx$ we obtain~$\uu\equiv\vv\xx\inv$ and so $\uu\equiv\ww\xx\inv$.
Since $\uu$ is geodesic we must have $|\ww\xx\inv|\geq\ell-1$, implying $|\ww|\geq\ell-2$ and so $|\ww|=\ell-2$.
\end{proof}

\subsection{Geodesic words}
 For all $\ell\in\NN$ the number $g(\BB\nn,\SSS\nn;\ell)$ can be obtained at no cost during the construction of a representative set of $\BB\nn(\SSS\nn,\ell)$.
\begin{defi}
For a braid $\br\in\BB\nn$ we denote by $\omega_{\SSS\nn}(\br)$ the number of geodesic $\SSS\nn$-words representing $\br$. 
\end{defi}

\begin{prop}
\label{P:NGeo}
For $\br\in\BB\nn$ a braid with $\ell=|\br|_{\SSS\nn}\geq 1$, we have
\[
 \omega_{\SSS\nn}(\br)=\sum_{\substack{x \in \SSS\nn\\ |\br\xx\inv|_{\SSS\nn}=\llo}} \omega_{\SSS\nn}(\br\xx\inv).
\]
\end{prop}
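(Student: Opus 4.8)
The plan is to set up a bijection between the set of geodesic $\SSS\nn$-words representing $\br$ and the disjoint union, over suitable letters $x\in\SSS\nn$, of the sets of geodesic $\SSS\nn$-words representing $\br\xx\inv$. Counting cardinalities on both sides will then yield the claimed recurrence, since $\omega_{\SSS\nn}(\br)$ and $\omega_{\SSS\nn}(\br\xx\inv)$ are by definition the cardinalities of these sets of geodesic words.

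First I would note that every geodesic $\SSS\nn$-word $\uu$ representing $\br$ has length $\ell=\len{\SSS\nn}\br\geq1$, so $\uu$ is nonempty and can be written uniquely as $\uu=\uuu\xx$ where $\xx$ is its last letter and $\uuu$ is the prefix of length $\ell-1$. The map $\uu\mapsto(\xx,\uuu)$ is the candidate for the bijection. The key observations are: (i) since $\uu=\uuu\xx$ is geodesic of length $\ell$, its prefix $\uuu$ is geodesic of length $\ell-1$, so $\uuu$ is a geodesic word representing some braid of $\SSS\nn$-length $\ell-1$; (ii) from $\uu\equiv\br$ we get $\uuu\equiv\br\xx\inv$, hence $\len{\SSS\nn}{\br\xx\inv}\leq\ell-1$, and the parity constraint \eqref{E:mod2} together with geodesicity of $\uuu$ forces $\len{\SSS\nn}{\br\xx\inv}=\ell-1$; thus $\xx$ ranges exactly over the letters appearing in the sum, and $\uuu$ is a geodesic representative of $\br\xx\inv$. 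The inverse map sends a pair consisting of a letter $\xx$ with $\len{\SSS\nn}{\br\xx\inv}=\ell-1$ and a geodesic representative $\ww$ of $\br\xx\inv$ to the word $\ww\xx$; I would check that $\ww\xx\equiv\br\xx\inv\xx\equiv\br$ and that $\ww\xx$ is geodesic, since it has length $(\ell-1)+1=\ell=\len{\SSS\nn}\br$.

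The main verification — and the only place requiring care — is that the two maps are mutually inverse and that both are well-defined on the stated index set. Well-definedness reduces to the two length equalities above, which follow from geodesicity and the parity lemma \eqref{E:mod2}; I would spell out that appending $\xx$ to a geodesic word representing $\br\xx\inv$ of length $\ell-1$ cannot produce a word of length $<\ell$, precisely because $\len{\SSS\nn}\br=\ell$. That the composites are identities is immediate from the factorization $\uu=\uuu\xx$: reading off the last letter and then re-appending it recovers $\uu$, and conversely. Once the bijection is established, summing gives
\[
\omega_{\SSS\nn}(\br)=\sum_{\substack{x\in\SSS\nn\\ |\br\xx\inv|_{\SSS\nn}=\llo}}\omega_{\SSS\nn}(\br\xx\inv),
\]
which is the desired identity. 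I do not expect any genuine obstacle here; the subtlety is purely bookkeeping, namely ensuring the index set of the sum is exactly the set of last letters $\xx$ that can terminate a geodesic representative of $\br$, equivalently the set of $\xx$ with $\len{\SSS\nn}{\br\xx\inv}=\ell-1$.
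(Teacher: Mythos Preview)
Your argument is correct and essentially the same as the paper's: both partition the geodesic representatives of $\br$ according to their last letter and identify each piece with the geodesic representatives of $\br x^{-1}$ via removing/appending that letter. One minor simplification: the appeal to the parity constraint~\eqref{E:mod2} is unnecessary, since the geodesicity of the prefix $u'$ already gives $\len{\SSS\nn}{\br x^{-1}}=|u'|=\ell-1$ directly.
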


\begin{proof}
Let $\WW$ be the set of geodesic $\SSS\nn$-words representing $\br$. 
The cardinality of~$\WW$ is then $\omega_{\SSS\nn}(\br)$.
For all $x\in\SSS\nn$ we denote by $\WW_\xx$ the words of $\WW$ ending with $\xx$.
Since all words of $\WW$ have length $\ell\geq 1$ we must have
\[
\WW=\bigsqcup_{x\in\SSS\nn} \WW_\xx.
\]
Let us fix an element $\yy\in\SSS\nn$.
By construction, any word of $\WW_\yy$ has length $\llo$, represents the braid $\br\yy\inv$ and is geodesic.
Hence $\WW_\yy$ is not empty if and only if the $\SSS\nn$-length of $\br\yy\inv$ is $\llo$, which gives 
\[ \omega_{\SSS\nn}(\br)=\card{\WW}=\sum_{x\in\SSS\nn}\card{\WW_\xx}=\hspace{-1.5em}\sum_{\substack{x \in \SSS\nn\\ |\br\xx\inv|_{\SSS\nn}=\llo}} \hspace{-1.5em}\card{\WW_\xx}.
\]
Assume $\br\yy\inv$ has $\SSS\nn$-length $\llo$.
Since for any geodesic $\SSS\nn$-word $\vv$ representing~$\br\yy\inv$, the word $\vv\yy$  is a geodesic representative of $\br$, the braid $\br\yy\inv$ has exactly~$\omega_{\SSS\nn}(\br\yy\inv)$ geodesic representatives in~$\WW_\yy$.
Therefore $\card{\WW_\yy}$ is $\omega_{\SSS\nn}(\br\yy\inv)$ and the result follows.
\end{proof}

\subsection{A first algorithm}

We can now give a first algorithm returning a representative set $\WW_\ell$ of~$\BB\nn(\SSS\nn,\ell)$ for $\ell\geq 2$.
In order to determine $g(\BB\nn,\SSS\nn;\ell)$ we also compute the value of $\omega_{\SSS\nn}$ for all words in $\WW_\ell$. 

In order to construct by induction a representative set $W$, we must test if a given word $u$ is equivalent to a word occuring in $W$ :

\begin{defi}
For a set $\WW$ of $\SSS\nn$-words we say that a $\SSS\nn$-word $\uu$ \emph{appears} in~$\WW$, denoted by $\uu\app \WW$, if $\uu$ is equivalent to a word $\vv$ of $\WW$.
\end{defi}

In an algorithmic context a $\SSS\nn$-word is represented as an array of integers plus another integer $\omega$ which eventually correspond to $\omega_{\SSS\nn}(\overline\uu)$.
Whenever two variables $\texttt{u}$ and $\texttt{v}$ stand for the $\SSS\nn$-words $\uu$ and $\vv$ we use:

-- $\texttt{u}\cdot\omega$ to design the integer $\omega$ associated to the word $\uu$;

-- $\texttt{u}\,\texttt{v}$ to design the product $\uu\vv$.

\begin{algorithm}[H]
\caption{-- \small \textsc{RepSet} : For $\ell \geq 2$, returns a set $\texttt{\WW}_\ll$ representing $\BB\nn(\SSS\nn,\ell)$ from two sets $\texttt{\WW}_\llo$ and $\texttt{\WW}_\llt$ representing $\BB\nn(\SSS\nn,\llo)$ and $\BB\nn(\SSS\nn,\llt)$ respectively. For each word $\uu\in\texttt{\WW}_\llo$, the value of $\texttt{u}\cdot\omega$ is assumed to be $\omega_{\SSS\nn}(\overline{\uu})$.}
\label{A:Rep}
\small
\begin{algorithmic}[1]
\Function{RepSet}{$\texttt{\WW}_{\llo}$,$\texttt{\WW}_\llt$}
\State $\texttt{\WW}_\ll\gets \emptyset$
\For{$\texttt{\xx}\in \SSS\nn$}
	\For{$\texttt{\uu}\in \texttt{\WW}_\llo$}
		\State $\texttt{\vv}\gets \texttt{\uu}\,\texttt{\xx}$
		\If{$\texttt{\vv}\napp \texttt{\WW}_\llt$}
            \If{$\texttt{\vv}\napp\texttt{\WW}_\ll$} \Comment{a new braid $\overline\vv$ of $\SSS\nn$-length $\ll$ is found}
                \State $\texttt{\WW}_\ll\gets \texttt{\WW}_\ll\sqcup \{\texttt{\vv}\}$
                \State $\texttt{\vv}\cdot\omega\gets\texttt{\uu}\cdot\omega$
            \Else \Comment{$\vv$ is another geodesic word representing $\overline\vv$}
                \State $\texttt{\ww}\gets$ the word in $\texttt{\WW}_\ll$ equivalent to $\texttt{\vv}$
                \State $\texttt{\ww}\cdot\omega\gets\texttt{\ww}\cdot\omega+\texttt{\uu}\cdot\omega$ 
            \EndIf
		\EndIf
	\EndFor
\EndFor
\State{\textbf{return} $\WW_\ell$}
\EndFunction
\end{algorithmic}
\end{algorithm}

\begin{prop}
Let $\ell\geq2$ be an integer.
Running on sets $\WW_\llo$ and $\WW_\llt$ representing $\BB\nn(\SSS\nn,\llo)$ and $\BB\nn(\SSS\nn,\llt)$ respectively, algorithm \textsc{RepSet} returns a representing set $\WW_\ll$ of  $\BB\nn(\SSS\nn,\ll)$.
Moreover for all $\uu\in \WW_\ell$, the value of~$\emph{\texttt{\uu}}\cdot\omega$ is the integer $\omega_{\SSS\nn}(\overline\uu)$.
\end{prop}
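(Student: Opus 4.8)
The plan is to establish three facts in turn: that the test on line~6 admits a pair exactly when the word $\vv=\uu\xx$ is geodesic, that the returned set $\WW_\ell$ is a set of unique geodesic representatives of $\BB\nn(\SSS\nn,\ell)$, and that the accumulated values satisfy $\texttt{u}\cdot\omega=\omega_{\SSS\nn}(\overline{\uu})$ for every $\uu\in\WW_\ell$.

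First I would analyse the filter $\vv\napp\WW_\llt$. Fix a pair $(\uu,\xx)\in\WW_\llo\times\SSS\nn$ and put $\vv=\uu\xx$, so that $|\vv|=\ell$. If $\vv$ is not geodesic, then Lemma~\ref{L:Geo} provides a geodesic word of length~$\llt$ equivalent to $\vv$; its underlying braid lies in $\BB\nn(\SSS\nn,\llt)$, hence is represented by a word of $\WW_\llt$, so $\vv\app\WW_\llt$. Contrapositively, $\vv\napp\WW_\llt$ forces $\vv$ to be geodesic, whence $\overline{\vv}\in\BB\nn(\SSS\nn,\ell)$. Conversely, when $\vv$ is geodesic one has $|\overline{\vv}|_{\SSS\nn}=\ell$, so $\overline{\vv}\notin\BB\nn(\SSS\nn,\llt)$; as every word of $\WW_\llt$ represents a braid of $\SSS\nn$-length $\llt$, none is equivalent to $\vv$ and the test is passed. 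Thus line~6 retains precisely the geodesic words of length $\ell$.

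Next I would check that $\WW_\ell$ represents $\BB\nn(\SSS\nn,\ell)$ in the sense of Definition~\ref{D:Rep}. By the previous step, every word inserted on line~8 is geodesic and represents a braid of $\SSS\nn$-length $\ell$; the guard $\vv\napp\WW_\ell$ on line~7 guarantees that each braid is recorded at most once, which gives uniqueness. For surjectivity, let $\br\in\BB\nn(\SSS\nn,\ell)$. By \eqref{E:B:sub} we may write $\br=\overline{\uu}\cdot\xx$ with $\overline{\uu}\in\BB\nn(\SSS\nn,\llo)$ and $\xx\in\SSS\nn$, and since $\WW_\llo$ represents $\BB\nn(\SSS\nn,\llo)$ this braid is $\overline{\uu}$ for some $\uu\in\WW_\llo$. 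The loop then examines $(\uu,\xx)$, the word $\vv=\uu\xx$ is geodesic, and so $\br=\overline{\vv}$ is either already present in $\WW_\ell$ or is inserted at that step. Hence every braid of $\BB\nn(\SSS\nn,\ell)$ admits a unique representative in $\WW_\ell$.

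Finally I would read off the statement about $\omega$ from Proposition~\ref{P:NGeo}. Fix $\br\in\BB\nn(\SSS\nn,\ell)$ with representative $\ww\in\WW_\ell$. Over the whole run, the counter $\texttt{w}\cdot\omega$ is initialised once (line~9) and then incremented (line~12) by $\texttt{u}\cdot\omega$ exactly once for each pair $(\uu,\xx)\in\WW_\llo\times\SSS\nn$ satisfying $\overline{\uu\xx}=\br$; such a pair always survives the filter, since $\vv=\uu\xx$ has length $\ell$ and represents $\br$, hence is geodesic. For a fixed $\xx$ such a $\uu$ exists iff $\br\xx\inv\in\BB\nn(\SSS\nn,\llo)$, i.e.\ iff $|\br\xx\inv|_{\SSS\nn}=\llo$, in which case $\uu$ is unique with $\overline{\uu}=\br\xx\inv$, and by the hypothesis placed on $\WW_\llo$ one has $\texttt{u}\cdot\omega=\omega_{\SSS\nn}(\br\xx\inv)$. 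Summing over $\xx\in\SSS\nn$ therefore yields $\texttt{w}\cdot\omega=\sum_{|\br\xx\inv|_{\SSS\nn}=\llo}\omega_{\SSS\nn}(\br\xx\inv)$, which equals $\omega_{\SSS\nn}(\br)$ by Proposition~\ref{P:NGeo}. I expect the delicate point to be precisely this last bookkeeping: one must argue that the pairs $(\uu,\xx)$ producing $\br$ are in bijection with the letters $\xx$ contributing to the sum of Proposition~\ref{P:NGeo}, and that the accumulation over successive iterations of the two nested loops collects each summand once and only once.
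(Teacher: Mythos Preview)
Your proof is correct and follows essentially the same route as the paper: you use Lemma~\ref{L:Geo} to justify the geodesic filter on line~6, the guard on line~7 for uniqueness, relation~\eqref{E:B:sub} for surjectivity, and Proposition~\ref{P:NGeo} for the $\omega$ values. The only difference is that you spell out the bookkeeping for the $\omega$ accumulation in full, whereas the paper dismisses it as ``a direct consequence of Proposition~\ref{P:NGeo}''.
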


\begin{proof}
Let $\WW'$ be the set of \eqref{E:X} and $\WW_\ll$ be the set returning by \textsc{RepSet}.
The two ``for loops" on line 3 and 4 guarantee $\WW_\ll\subseteq \WW'$. 
By lemma~\ref{L:Geo} and hypotheses on $\WW_\llo$ and $\WW_\llt$, condition $\vv\napp \WW_\llt$ of line 6 tests if the word~$\vv=\uu\xx$ is geodesic.
The second if statement line 7 guarantees we append a word $\vv$ in~$\WW_\ell$ if and only if $\vv$ does not appear in~$\WW_\ell$. The set $W_\ell$ is then a representative set of $\BB\nn(\SSS\nn,\ell)$.
The result about $\omega_{\SSS\nn}$ is a direct consequence of Proposition~\ref{P:NGeo}.
\end{proof}

To be complete we must explain how to test if a $\SSS\nn$-word $\uu$ appears in a set of~$\SSS\nn$-words.
This can be achieved using a normal form (like the Garside's normal form) but such a normal form doesn't provide geodesic representatives. As, for our future research, we want to store braids using geodesic representatives, we prefer to use another method. 

\subsection{Dynnikov's coordinates}

Originally defined in~\cite{Dynnikov} from the geometric interpretation of the braid group~$\BB\nn$ as the mapping class group of the $\nn$-punctured disk of~$\mathbb{R}^2$, the Dynnikov's coordinates admit a purely algebraic definition from the action of $\BB\nn$ on~$\ZZ^{2n}$.

For $x\in\ZZ$, we denote by $x^+$ the non-negative integer $\max(x,0)$ and by $x^-$ the non-positive integer $\min(x,0)$.
We first define an action of Artin's generators on~$\ZZ^4$.

\begin{defi}
For all $\ii\in[1,\nno]$ and all $(x_1,y_1,x_2,y_2)\in\ZZ^4$ we put 
\[
(x_1,y_1,x_2,y_2)\cdot \sig\ii=(x'_1,y'_1,x'_2,y'_2)\quad\text{and}\quad(x_1,y_1,x_2,y_2)\cdot \siginv\ii=(x''_1,y''_1,x''_2,y''_2)
\]
where 
\begin{align*}
x'_1&=x_1+y_1^++(y_2^+-t_1)^+&x''_1&=x_1-y_1^+-(y_2^++t_2)^+\\
y'_1&=y_2-t_1^+&y''_1&=y_2+t_2^-\\
x'_2&=x_2+y_2^-+(y_1^-+t_1)^-&x''_2&=x_2-y_2^--(y_1^--t_2)^-\\
y'_2&=y_1+t_1^+&y''_2&=y_1-t_2^-
\end{align*}
with $t_1=x_1-y_1^--x_2+y_2^+$ and $t_2=x_1+y_1^--x_2-y_2^+$.
\end{defi}

We can now define an action of $\ABP\nn$-words on $\ZZ^{2n}$.

\begin{defi}
For $\ii\in[1,\nno]$, $e=\pm1$ and $(a_1,b_1,\ldots,a_n,b_n)\in\ZZ^{2n}$ we put
\[
(a_1,b_1,\ldots,a_n,b_n)\cdot \sig\ii^e=(a'_1,b'_1,\ldots,a'_n,b'_n)
\]
where $(a'_\ii,b'_\ii,a'_\iip,b'_\iip)=(a_\ii,b_\ii,a_\iip,b_\iip)\cdot \sig\ii^e$ and $a'_k=a_k$, $b'_k=b_k$ for $k$ not belonging to $\{\ii,\iip\}$.
\end{defi}

\begin{defi}
For a $\ABP\nn$-word $\uu$ we define $\Dyn(\uu)$ to be $(0,1,\ldots,0,1)\cdot\uu$.
Similarly for an $\ABKL\nn$-word $\vv$ we define $\Dyn(\vv)$ to be $\Dyn(\uu)$ where $\uu$ is the $\ABP\nn$-word obtained from $\uu$ using relation \eqref{E:apq} of Definition~\ref{D:apq}.
\end{defi}

Naturally defined on braid words, Dynnikov's coordinates is a braid invariant.

\begin{prop}
For all $\SSS\nn$-words $\uu$ and $\vv$ we have $\Dyn(\uu)=\Dyn(\vv)$ if and only if $\uu\equiv\vv$.
\end{prop}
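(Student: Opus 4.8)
The plan is to prove the two implications separately and to reduce the dual case to the Artin case at the outset. By definition, $\Dyn$ on an $\ABKL\nn$-word is computed by first rewriting it as an $\ABP\nn$-word through \eqref{E:apq}, and since that rewriting preserves the braid represented, both the statement and its proof for $\ABKL\nn$ follow from the corresponding statement for $\ABP\nn$: equivalent dual words rewrite to equivalent Artin words, and conversely equal coordinates of dual words are equal coordinates of their Artin rewritings. So I would assume $\SSS\nn=\ABP\nn$ throughout.

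For the direct implication ($\uu\equiv\vv \Rightarrow \Dyn(\uu)=\Dyn(\vv)$), the goal is to show that the generator-wise action on $\ZZ^{2n}$ descends to a genuine right action of $\BB\nn$, so that $\Dyn$ depends only on the braid represented. First I would check that the formulas for $\sig\ii$ and $\siginv\ii$ are mutually inverse bijections of $\ZZ^4$, which validates relation \eqref{E:Pssinv}. The far-commutation relations of \eqref{E:PBn} (for $|\ii-\jj|\geq2$) hold for free, since $\sig\ii^{\pm1}$ and $\sig\jj^{\pm1}$ only modify the coordinate blocks indexed by $\{\ii,\iip\}$ and $\{\jj,\jjp\}$, which are then disjoint, so the two maps commute. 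The remaining point is the braid relation $\sig\ii\sig\jj\sig\ii=\sig\jj\sig\ii\sig\jj$ for $|\ii-\jj|=1$; this is a finite, if tedious, verification on the three overlapping blocks, i.e. on $\ZZ^6$, using the explicit formulas. Once these relations are confirmed, the free-monoid action factors through the monoid presentation \eqref{E:PBn}--\eqref{E:Pssinv} of $\BB\nn$, so equivalent words act identically and $\Dyn(\uu)=\Dyn(\vv)$.

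For the converse ($\Dyn(\uu)=\Dyn(\vv)\Rightarrow\uu\equiv\vv$), I would use the right action just obtained. Writing the base point $\mathbf{b}=(0,1,\ldots,0,1)$, the hypothesis reads $\mathbf{b}\cdot\overline{\uu}=\mathbf{b}\cdot\overline{\vv}$; applying $\overline{\vv}\inv$ gives $\mathbf{b}\cdot(\overline{\uu}\,\overline{\vv}\inv)=\mathbf{b}$, so the braid $\overline{\uu}\,\overline{\vv}\inv$ fixes $\mathbf{b}$. It therefore suffices to show that the stabilizer of $\mathbf{b}$ in $\BB\nn$ is trivial, which is precisely the injectivity of the Dynnikov coordinate map and is the substantial part of the statement. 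Here I would invoke the geometric origin of these coordinates: $\mathbf{b}$ encodes a fixed integral lamination (a canonical multicurve) in the $\nn$-punctured disk, Dynnikov's coordinates record its image under the mapping-class action, and the action of $\BB\nn$ on such laminations is faithful \cite{Dynnikov}. Hence only the trivial braid fixes $\mathbf{b}$, giving $\overline{\uu}=\overline{\vv}$, that is $\uu\equiv\vv$.

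The main obstacle is this last step. The forward invariance direction is a bounded computation on $\ZZ^4$ and $\ZZ^6$, but the converse is the genuinely deep content, resting on the faithfulness of the braid action on integral laminations of the punctured disk rather than on any algebraic manipulation of the coordinate formulas.
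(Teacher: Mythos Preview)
Your argument is correct. The paper's own proof is a single sentence citing Corollary~2.24 on page~225 of~\cite{Ordering}, which packages both directions at once. You instead unpack the forward direction (checking that the formulas respect the defining relations~\eqref{E:PBn} and~\eqref{E:Pssinv}, which is indeed a finite computation on $\ZZ^4$ and $\ZZ^6$) and then appeal to~\cite{Dynnikov} for the faithfulness of the braid action on integral laminations to obtain the converse. Both approaches ultimately rest on the same external input for injectivity; yours is more self-contained on the invariance side and makes explicit the reduction of the $\ABKL\nn$ case to the $\ABP\nn$ case, while the paper's is terser and delegates the entire statement to the cited reference.
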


\begin{proof}
Direct consequence of Corollary 2.24 page 225 of \cite{Ordering}.
\end{proof}

We now go back to the problem of testing if a given $\SSS\nn$-word appears in a set~$\WW$ of~$\SSS\nn$-words.
A solution consists in representing the set $\WW$ in machine by an array. 
To test if the word $\uu$ appears in $\WW$ we can compute $\Dyn(\uu)$ and compare it to all the values of $\Dyn(\vv)$ whenever $\vv$ go through $\WW$.
This method needs at most $1+\card{\WW}$ computations of Dynnikov's coordinates. 
If words in $\WW$ are sorted by their Dynnikov's coordinates we can test if $\uu$ appear in $\WW$ using at most $\log_2(\card{\WW})$ computations of Dynnikov's coordinates.
A more efficient solution is obtained using an \texttt{unordered\_set} \cite{STL} based on a hash function.
The insertion and lookup complexity is then constant in  average on a RAM machine depending of the hash function.

As the objective of the current paper is to deepen our knowledge on combinatorics of~$\BB4$, we define a hash function for four strand braids. 
Assume $\br$ is a braid of~$\BB4$ given by a $\SSS4$-word $\uu$.
The hash of $\br$ is 
\[
\hash(\br)=\sum_{i=1}^4 \left(\rem(a_i,256)\times 256^{2i-2}+\rem(b_i,256)\times 256^{2i-1}\right),
\]
where $(a_1,b_1,\ldots,a_4,b_4)=\Dyn(\uu)$ and $\rem(\kk,256)$ is the positive remainder of~$\kk$ modulo~$256$.
By construction, $\hash(\br)$ is an integer lying in $[0,2^{64}-1]$ and so our hash function is very well suited for $64$ bits computers.

\subsection{Space complexity}

Here again we focus on the case $n=4$.
The smallest addressable unit of memory on common computers is the byte which can have $256$ different values.
As the set $\ABP4$ has $6$ elements we can store three $\ABP4$-letters using one byte ($6^3=216$).
Hence a $\ABP4$-word of length $\ell$ requires $\lceil\frac{\ell}3\rceil$ bytes to be stored.
Since there are $12$ elements in $\ABKL4$, a $\ABKL4$-word of length $\ell$ requires $\lceil\frac{\ell}2\rceil$ bytes to be stored.

Assume we want to determine a representative set of $\BB4(\ABP4,21)$. The memory needed by the algorithm \textsc{RepSet} is at least the space needed to store $\ABP4$-words of~$\WW_{21}$.
By Table~\ref{T:B4Artin} of Section~\ref{S:Results} there are approximatively $60\cdot 10^9$ elements in this set.
With the above storage method of a $\ABP4$-word, the algorithm needs $7\cdot60\cdot 10^9$ bytes, \ie, $391$\texttt{Go} of memory to run, which is too much.
To reduce the memory requirement we can split the sets~$\BB\nn(\SSS\nn,\ell)$ in many subsets depending of the values of certain braid invariants.

In case we want to determine $g(\BB\nn,\SSS\nn;\ell)$ we also store the value of $\omega_{\SSS\nn}(\overline{\uu})$ for all words in obtained representative sets. 

\section{Braid template}

\label{S:Template}

Here again $\nn$ is  an integer $\geq 2$ and $\SSS\nn$  denotes either $\ABP\nn$ or $\ABP\nn^\ast$.
Each braid invariant $\iota$ corresponds to a map from $\BB\nn$ to a set $X$.

\begin{defi}
A set of braid invariants $\iota_1,\ldots,\iota_m$ is said to be \emph{inductively stable} if for every braid $\br\in\BB\nn(\SSS\nn,\ell)$ and every $x\in\SSS\nn$, and every $k=1,\ldots,m$,  
the value of $\iota_\kk(\br\cdot x)$ depends only on $\iota_1(\br),\ldots,\iota_m(\br)$ and $x$ but not on $\br$ itself.
\end{defi}

The aim of this section is to determine an inductively stable set of braid invariants in order to split in 
many pieces the determination of a representative set of~$\BB\nn(\SSS\nn,\ell)$.  

\subsection{Permutation}
For $n\geq2$ we denote by $\Sym\nn$ the set of all bijections of $\{1,\ldots,n\}$ into itself.
The transposition $(\ii\  \iip)$ of $\Sym\nn$ exchanging $i$ and $\iip$ is denoted $s_i$.

\begin{defi}
 We denote by $\pi:\BB\nn\to\Sym\nn$ the surjective homomorphism of~$(\BB\nn,\cdot)$ to $(\Sym\nn,\circ)$ defined by
 $\pi(\sig\ii)= s_i$.
\end{defi}

If $\br$ is a braid of $\BB\nn$ then $\pi(\br)$ is the permutation of $\Sym\nn$ such that the strand ending at position $\ii$ starts at position $\pi(\beta)(\ii)$.

\begin{exam}
For $\br=\sig1\siginv2\sig1\sig2$ we have $\pi(\br)=s_1\,s_2\,s_1\,s_2=\big(\!{\tiny\begin{array}{ccc}1&2&3\\3&1&2\end{array}}\!\big)$, as illustrated on the following diagram :

\vspace{0.2em}

\begin{center}
    \begin{tikzpicture}[x=0.06cm,y=0.06cm]
    \draw[line width=2,color=white!45!black] (-5,0) -- (0,0) .. controls (5,0) and (5,10) .. (10,10);
    \draw[line width=6,color=white] (0,10) .. controls (5,10) and (5,0) .. (10,0); 
    \draw[line width=2,color=white!70!black] (-5,10) -- (0,10) .. controls (5,10) and (5,0) .. (10,0) -- (20,0) .. controls (25,0) and (25,10) .. (30,10) .. controls (35,10) and (35,20) .. (40,20) -- (45,20);
    \draw[line width=6,color=white] (20,10) .. controls (25,10) and (25,0) .. (30,0);
    \draw[line width=2] (-5,20) -- (10,20) .. controls (15,20) and (15,10) .. (20,10) .. controls (25,10) and (25,0) .. (30,0) -- (45,0);
    \draw[line width=6,color=white] (10,10) .. controls (15,10) and (15,20) .. (20,20);
    \draw[line width=6,color=white] (30,20) .. controls (35,20) and (35,10) .. (40,10);
    \draw[line width=2,color=white!45!black] (10,10) .. controls (15,10) and (15,20) .. (20,20) -- (30,20) .. controls (35,20) and (35,10) .. (40,10) -- (45,10);
    \draw(45,0) node[right]{\footnotesize$1$};
    \draw(45,10) node[right]{\footnotesize$2$};
    \draw(45,20) node[right]{\footnotesize$3$};
    \draw(-5,0) node[left]{\footnotesize$\pi(\beta)(2)=1$};  
    \draw(-5,10) node[left]{\footnotesize$\pi(\beta)(3)=2$};  
    \draw(-5,20) node[left]{\footnotesize$\pi(\beta)(1)=3$};  
    \end{tikzpicture}
  \end{center}
\end{exam}

As $\pi$ is a homomorphism, for all $\br\in\BB\nn$ and $x\in \SSS\nn$ we have $\pi(\br\cdot x)=\pi(\br)\circ \pi(x)$ and so the singleton $\{\pi\}$ is inductively stable.

\begin{lemm}
For $1\leq \indi<\indii\leq \nn$ we have $\pi(\aa\indi\indii)=(\indi\ \indii)$.
\end{lemm}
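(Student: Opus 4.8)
The plan is to exploit that $\pi$ is a homomorphism together with the fact that each generator $s_\indi=(\indi\ \indip)$ of $\Sym\nn$ is an involution. Applying $\pi$ to the defining word \eqref{E:apq} of $\aa\indi\indii$ yields $\pi(\aa\indi\indii)=s_\indi\circ\cdots\circ s_\indiit\circ s_\indiio\circ s_\indiit\inv\circ\cdots\circ s_\indi\inv$; since $s_\kk\inv=s_\kk$ for every index $\kk$, the two halves of this product are inverse to one another, so $\pi(\aa\indi\indii)$ is precisely the conjugate of the transposition $s_\indiio=(\indiio\ \indii)$ by the permutation $w=s_\indi\circ\cdots\circ s_\indiit$. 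Thus the statement reduces to a single conjugation computation in $\Sym\nn$, and the recalled formula $g\tau g\inv=(g(a)\ g(b))$ for $\tau=(a\ b)$ will do the work.

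Rather than expand $w(\indiio)$ and $w(\indii)$ head-on, I would argue by induction on $\indii\minus\indi$, which keeps the bookkeeping to conjugation by a \emph{single} transposition at each step. The base case is $\indii=\indip$, where the word \eqref{E:apq} collapses to $\sig\indi$, so $\pi(\aa\indi\indip)=s_\indi=(\indi\ \indip)$, as wanted. For the inductive step I would use the recursive identity $\aa\indi\indii=\sig\indi\,\aa\indip\indii\,\siginv\indi$, valid for $\indii\geq\indipp$, which is read off directly by peeling the outermost letters $\sig\indi$ and $\siginv\indi$ off \eqref{E:apq}.

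Assuming $\pi(\aa\indip\indii)=(\indip\ \indii)$ by the induction hypothesis, the homomorphism property gives $\pi(\aa\indi\indii)=s_\indi\circ(\indip\ \indii)\circ s_\indi\inv=s_\indi\circ(\indip\ \indii)\circ s_\indi$, again using $s_\indi\inv=s_\indi$. Conjugating the transposition $(\indip\ \indii)$ by $s_\indi=(\indi\ \indip)$ replaces its entries by their images under $s_\indi$: here $s_\indi$ sends $\indip$ to $\indi$ and fixes $\indii$ because $\indii\geq\indipp$, so the conjugate is $(\indi\ \indii)$, completing the induction. The only point needing care — the ``main obstacle'', such as it is — is verifying that $\indii$ is genuinely fixed by $s_\indi$ in the inductive step, i.e.\ that $\indii\neq\indip$, which is exactly the hypothesis $\indii\geq\indipp$ distinguishing the step from the base case; everything else is routine.
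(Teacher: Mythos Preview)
Your proof is correct and follows the same line as the paper: apply the homomorphism $\pi$ to the defining word \eqref{E:apq} and simplify the resulting product of adjacent transpositions to $(\indi\ \indii)$. The paper simply writes the product out and asserts the equality $(\indi\ \indip)\circ\cdots\circ(\indiio\ \indii)\circ(\indiit\ \indiio)\circ\cdots\circ(\indi\ \indip)=(\indi\ \indii)$, whereas you make the implicit induction explicit via the recursion $\aa\indi\indii=\sig\indi\,\aa\indip\indii\,\siginv\indi$ and the conjugation rule for transpositions; this is a presentational difference only.
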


\begin{proof}
As $\pi$ is a homomorphism, Definition~\ref{D:apq} gives 
\begin{align*}
\pi(\aa\indi\indii)&=\pi(\sig\indi)\circ\ldots\circ\pi(\sig\indiio)\circ\pi(\sig\indiit)\inv\circ\ldots\circ\pi(\sig\indi)\inv\\
&=(\indi\ \indip)\circ\ldots\circ(\indiio\ \indii)\circ (\indiit\ \indiio)\circ \ldots\circ (\indi\ \indip)\\
&=(p\ q).\qedhere
\end{align*}
\end{proof}

\subsection{Linking numbers}

Assume $\br$ is a braid of $\BB\nn$ and let $\ii$ and $\jj$ be two different integers of $[1,\nn]$.
The linking number of the two strands $\ii$ and $\jj$ in $\br$ is the algebraic number of crossings in $\br$ involving the strands $\ii$ and $\jj$.
A positive crossing ($\sig{\kk}$) counts for $+1$ whereas a negative one $(\siginv{\kk})$ counts for $-1$ :

\begin{center}
\begin{tikzpicture}[x=0.07cm,y=0.07cm]
\draw[line width=2] (0,0) .. controls (5,0) and (5,10) .. (10,10);
\draw[line width=6,color=white] (0,10) .. controls (5,10) and (5,0) .. (10,0);
\draw[line width=2] (0,10) .. controls (5,10) and (5,0) .. (10,0);
\draw (18,5) node{$\rightarrow+1$};
 \begin{scope}[shift={(60,0)}]
\draw[line width=2] (0,10) .. controls (5,10) and (5,0) .. (10,0);
\draw[line width=6,color=white] (0,0) .. controls (5,0) and (5,10) .. (10,10);
\draw[line width=2] (0,0) .. controls (5,0) and (5,10) .. (10,10);
\draw (18,5) node{$\rightarrow-1$};
 \end{scope}
\end{tikzpicture}

\end{center}  

\begin{defi}
 For $\br\in\BB\nn$ and $i$, $j$ two different integers of $[1,\nn]$ we denote by~$\ell_{i,j}(\br)$ the linking number of strands $\ii$ and $\jj$ in $\br$.
 The map $\ell_{i,j}:\BB\nn\to\ZZ$ is then a braid invariant.
\end{defi}

A priori, our definition of linking numbers depends of a diagram coding the braid and not on the braid itself. 
An immediate argument using relations \eqref{E:PBn} and \eqref{E:Pssinv} guarantees this is not the case.
The reader can consult \cite{Dehornoy-calcul} page 29 for a more formal definition of linking number\footnote{In fact, the two definitions are slightly different but we have $\ell_{i,j}(\br)=2\lambda_{i,j}(\br)$.}  based of an integral definition and a geometric realization of $\br$ in $\mathbb{R}^3$.

\begin{lemm}
\label{L:sigap}
Let  $\ii,\jj$ be two integers satisfying $1\leq \ii<\jj\leq \nn$ and $e=\pm1$.

\HH -- For all~$\kk\in[1,\nno]$ we have 
\[
\ell_{\ii,\jj}(\sig\kk^e)=\begin{cases}
e & \text{if $\ii=\kk$ and $\jj=\kkp$},\\
0 & \text{otherwise}.
\end{cases}
\]

\HH -- For all~$1\leq \indi<\indii \leq \nn$ we have
\[
\ell_{\ii,\jj}(\aa\indi\indii^e)=\begin{cases}
e & \text{if $\ii=\indi$ and $\jj=\indii$},\\
1 & \text{if $\ii=\indi$ and $\jj<\indii$},\\
-1 & \text{if $\indi<\ii$ and $\jj=\indii$},\\
0 & \text{otherwise}.
\end{cases}
\]
\end{lemm}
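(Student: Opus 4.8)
The plan is to work directly from the definition of the linking number: $\ell_{\ii,\jj}(\br)$ is the signed count of the crossings of a diagram of $\br$ that involve the two strands $\ii$ and $\jj$, a positive crossing counting $+1$ and a negative one $-1$. Since each letter of a braid word produces exactly one crossing, namely between the two strands occupying the relevant adjacent positions at that point of the diagram, I can evaluate any $\ell_{\ii,\jj}$ by reading a chosen word letter by letter, recording for each letter which pair of strands it makes cross and with which sign, and then summing. Throughout I label each strand by the position it occupies at the bottom of the diagram (its source); this is the convention that makes the statement consistent with Figure~\ref{F:apq}, and it suffices because the intermediate strands are fixed by the underlying permutation, so only the outer strands could be affected by the choice.

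The first item is immediate. The word $\sig\kk^e$ has a single letter, hence a single crossing; it exchanges the strands starting at positions $\kk$ and $\kkp$ and is positive for $e=1$, negative for $e=-1$. Thus $\ell_{\kk,\kkp}(\sig\kk^e)=e$, while every other pair of strands is untouched and contributes $0$.

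For the second item with $e=1$ I would trace the strands through the explicit word $\aa\indi\indii=\sig\indi\cdots\sig\indiit\,\sig\indiio\,\siginv\indiit\cdots\siginv\indi$ of Definition~\ref{D:apq}. Reading the positive block $\sig\indi\cdots\sig\indiio$ from left to right drags the strand starting at position $\indi$ upward from position $\indi$ to position $\indii$, so that it crosses, exactly once and positively, each of the strands starting at $\indip,\ldots,\indii$; this contributes $+1$ to $\ell_{\indi,\jj}$ for every $\indi<\jj\leq\indii$. Reading the negative block $\siginv\indiit\cdots\siginv\indi$ then drags the strand starting at position $\indii$ downward from position $\indiio$ to position $\indi$, crossing once and negatively each of the strands starting at $\indip,\ldots,\indiio$; this contributes $-1$ to $\ell_{\ii,\indii}$ for every $\indi<\ii<\indii$. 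All remaining strands stay fixed and uninvolved, so every other linking number vanishes. Separating the single central crossing (the pair $\indi,\indii$, value $+1$) from the intermediate ones reproduces the four lines of the table for $e=1$.

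The case $e=-1$ follows from the key observation that inverting the dual generator flips only its central letter: reversing and inverting the word leaves the two outer blocks in place and merely turns $\sig\indiio$ into $\siginv\indiio$, so that $\aainv\indi\indii=\sig\indi\cdots\sig\indiit\,\siginv\indiio\,\siginv\indiit\cdots\siginv\indi$. Hence the trace above is unchanged except that the single crossing between the strands starting at $\indi$ and $\indii$ now counts $-1$; this turns $\ell_{\indi,\indii}$ from $+1$ into $-1$ and leaves every intermediate linking number as it was, which is exactly why the second and third lines of the table are independent of $e$. I expect the only real difficulty to be the bookkeeping: keeping track of which strand sits at each position after each letter, fixing the source labelling once and for all so that the intermediate pairs $(\indi,\jj)$ and $(\ii,\indii)$ are attributed to the correct strands, and spotting the asymmetry between $\aa\indi\indii$ and $\aainv\indi\indii$. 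Once these conventions are pinned down, the computation is routine.
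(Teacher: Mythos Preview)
Your argument is correct and is essentially the same as the paper's: both rely on the observation that $\aa\indi\indii^e=\sig\indi\cdots\sig\indiit\,\sig\indiio^{\,e}\,\siginv\indiit\cdots\siginv\indi$, so that only the sign of the central crossing depends on $e$, and then read off the linking numbers from this word. The paper compresses the entire second part into that formula plus a single diagram, whereas you spell out the letter-by-letter trace explicitly; the content is the same.
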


\begin{proof}
The case of $\sig\kk^e$ is immediate. 
The different values of $\ell_{\ii,\jj}(\aa\indi\indii^e)$ can be obtained from the following diagram of $\aa\indi\indii^e=\sig\indi\ldots\sig\indiit\ \sig\indiio^e\ \siginv\indiit\ldots\siginv\indi$ : 
\vspace{0.6em}

\hfill  \begin{tikzpicture}[x=0.05cm,y=0.05cm]
     \draw[line width=2] (-5,-10) -- (55,-10);
      \draw[line width=2] (-5,0) -- (0,0) .. controls (5,0) and (5,10) .. (10,10) .. controls (15,10) and (15,20) .. (20,20) .. controls (25,20) and (25,30) .. (30,30) -- (55,30); 
      \draw[line width=6,color=white] (-5,30) -- (20,30) .. controls (25,30) and (25,20) .. (30,20) .. controls (35,20) and (35,10) .. (40,10) .. controls (45,10) and (45,0) .. (50,0) -- (55,0); 
      \draw[line width=2] (-5,30) -- (20,30) .. controls (25,30) and (25,20) .. (30,20) .. controls (35,20) and (35,10) .. (40,10) .. controls (45,10) and (45,0) .. (50,0) -- (55,0); 
      \draw[line width=6,color=white] (-5,10) -- (0,10) .. controls (5,10) and (5,0) .. (10,0) -- (40,0) .. controls (45,0) and (45,10) .. (50,10) -- (55,10);
      \draw[line width=2] (-5,10) -- (0,10) .. controls (5,10) and (5,0) .. (10,0) -- (40,0) .. controls (45,0) and (45,10) .. (50,10) -- (55,10);
      \draw[line width=6,color=white] (-5,20) -- (10,20) .. controls (15,20) and (15,10) .. (20,10) -- (30,10) .. controls (35,10) and (35,20) .. (40,20) -- (55,20);
      \draw[line width=2] (-5,20) -- (10,20) .. controls (15,20) and (15,10) .. (20,10) -- (30,10) .. controls (35,10) and (35,20) .. (40,20) -- (55,20);
      \draw[line width=2] (-5,40) -- (55,40);
      \draw(-5,0) node[left]{\footnotesize$\indi$};
      \draw(-5,30) node[left]{\footnotesize$\indii$};
      \filldraw[color=white](19,19) rectangle (31,31);
      \draw[color=black](19,19) rectangle (31,31);
      \draw(25,25) node{$e$};
    \end{tikzpicture}
    \qedhere
\end{proof}

\begin{lemm}
\label{L:inv}
For $\br$ and $\brbr$ two braids of $\BB\nn$ and $1\leq\ii<\jj\leq\nn$ we have
\[
\ell_{i,j}(\br\cdot \brbr)=\ell_{i,j}(\br)+\ell_{\pi(\br)\inv(\ii),\pi(\br)\inv(\jj)}(\brbr),
\]
with the convention $\ell_{\indi,\indii}=\ell_{\indii,\indi}$ for $\indi>\indii$.
\end{lemm}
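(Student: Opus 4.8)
The plan is to reduce the general statement to the special case where the second factor is a single Artin generator, and then to bootstrap by induction on length. Concretely, I would first isolate the key identity
\[
\ell_{\ii,\jj}(\br\cdot\sig\kk^e)=\ell_{\ii,\jj}(\br)+\ell_{\pi(\br)\inv(\ii),\pi(\br)\inv(\jj)}(\sig\kk^e)\qquad(\star)
\]
valid for every $\br\in\BB\nn$, every $\kk\in[1,\nno]$ and $e=\pm1$. Since $\pi$ is a homomorphism, I would write $\brbr$ as a word $\sig{\kk_1}^{e_1}\cdots\sig{\kk_\mm}^{e_\mm}$ over $\ABP\nn$ and use $\pi(\br\brbr)\inv=\pi(\brbr)\inv\circ\pi(\br)\inv$ together with $(\star)$ at each step; a routine induction on $\mm$ then yields the lemma, the base case $\mm=0$ holding because the trivial braid has no crossing, so that $\ell_{\ii,\jj}(\ew)=0$. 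Expressing $\brbr$ over $\ABP\nn$ is harmless since $\ell_{\ii,\jj}$ and $\pi$ are braid invariants, so the reduction applies whether $\SSS\nn$ denotes Artin's or the dual generators.

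The heart of the argument is thus $(\star)$, which I would establish geometrically. Recall that $\ell_{\ii,\jj}(\br)$ is the signed number of crossings between two distinguished strands in any diagram of $\br$, this value being independent of the chosen diagram by the invariance already recorded. Fix a diagram of $\br$ and append, at the bottom, the single crossing coded by $\sig\kk^e$. This adds exactly one crossing, of sign $e$, occurring between the two strands that leave $\br$ at the adjacent positions $\kk$ and $\kkp$. Labelling strands by their starting position, the strand reaching position $\kk$ at the bottom of $\br$ is the strand labelled $\pi(\br)(\kk)$, and likewise for $\kkp$. Hence $\ell_{\ii,\jj}(\br\cdot\sig\kk^e)-\ell_{\ii,\jj}(\br)$ equals $e$ when $\{\ii,\jj\}=\{\pi(\br)(\kk),\pi(\br)(\kkp)\}$ and $0$ otherwise. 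On the other hand, by the first part of Lemma~\ref{L:sigap}, the quantity $\ell_{\pi(\br)\inv(\ii),\pi(\br)\inv(\jj)}(\sig\kk^e)$ equals $e$ exactly when $\{\pi(\br)\inv(\ii),\pi(\br)\inv(\jj)\}=\{\kk,\kkp\}$, \ie when $\{\ii,\jj\}=\{\pi(\br)(\kk),\pi(\br)(\kkp)\}$, and $0$ otherwise. The two increments coincide, which is $(\star)$.

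Two points require care. First, the symmetric convention $\ell_{\indi,\indii}=\ell_{\indii,\indi}$ is genuinely needed: although we assume $\ii<\jj$, the transformed indices $\pi(\br)\inv(\ii)$ and $\pi(\br)\inv(\jj)$ need not be in increasing order, and the argument only works because a crossing between two strands is unordered, so the linking number does not depend on which of the two strands is named first. Second, I expect the main obstacle to be precisely the bookkeeping hidden in $(\star)$: one must pin down the labelling convention for strands (here, by their starting position) and justify rigorously that appending a letter $\sig\kk^e$ creates a single crossing between the strands occupying interface positions $\kk$ and $\kkp$, these being the strands labelled $\pi(\br)(\kk)$ and $\pi(\br)(\kkp)$. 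Once this identification is made, the remainder is a routine combination of Lemma~\ref{L:sigap} with the homomorphism property of $\pi$.
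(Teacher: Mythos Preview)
Your argument is correct. The paper, however, takes the more direct route: it simply juxtaposes a diagram of $\br$ and one of $\brbr$ and observes that the strand entering $\br$ at position $\ii$ (resp.\ $\jj$) leaves it at position $\pi(\br)\inv(\ii)$ (resp.\ $\pi(\br)\inv(\jj)$), so the crossings in the $\brbr$-block contributing to $\ell_{\ii,\jj}(\br\cdot\brbr)$ are exactly those counted by $\ell_{\pi(\br)\inv(\ii),\pi(\br)\inv(\jj)}(\brbr)$. No induction, no reduction to generators---the statement is read off a single picture.

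Your geometric verification of $(\star)$ is precisely this same observation, specialised to $\brbr=\sig\kk^e$. Having already accepted that picture, you could just as well have applied it to an arbitrary $\brbr$ and been done; the induction on word length, while perfectly valid, only repackages the direct argument. What your approach buys is a cleaner separation between the invariance of $\ell_{\ii,\jj}$ and the explicit values from Lemma~\ref{L:sigap}, which may feel more rigorous if one is uneasy with purely diagrammatic reasoning. The paper's version buys brevity.
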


\begin{proof}
Immediate as soon as we consider the following diagram :
\vspace{0.6em}

\hfill\begin{tikzpicture}[x=0.05cm,y=0.05cm]
\draw (-10,10) node[left]{$\ii$};
\draw (-10,30) node[left]{$\jj$};
\draw[line width=2] (-10,0) -- (0,0);
\draw[line width=2] (-10,10) -- (0,10);
\draw[line width=2] (-10,20) -- (0,20);
\draw[line width=2] (-10,30) -- (0,30);
\draw[line width=2,color=gray] (0,0) -- (20,0) .. controls (25,0) and (25,10) .. (30,10) -- (40,10);
\draw[line width=2,color=gray] (0,20) -- (0,20) .. controls (5,20) and (5,30) .. (10,30) -- (30,30) .. controls (35,30) and (35,20) .. (40,20);
\draw[line width=6,color=white] (0,30) -- (0,30) .. controls (5,30) and (5,20) .. (10,20) .. controls (15,20) and (15,10) .. (20,10) .. controls (25,10) and (25,0) .. (30,0) -- (40,0);
\draw[line width=2,color=gray] (0,30) -- (0,30) .. controls (5,30) and (5,20) .. (10,20) .. controls (15,20) and (15,10) .. (20,10) .. controls (25,10) and (25,0) .. (30,0) -- (40,0);
\draw[line width=6,color=white] (0,10) -- (10,10) .. controls (15,10) and (15,20) .. (20,20) -- (30,20) .. controls (35,20) and (35,30) .. (40,30);
\draw[line width=2,color=gray] (0,10) -- (10,10) .. controls (15,10) and (15,20) .. (20,20) -- (30,20) .. controls (35,20) and (35,30) .. (40,30);
\draw[line width=1] (0,-5) rectangle (40,35);
\draw[line width=2] (40,0) -- (50,0);
\draw[line width=2] (40,10) -- (90,10);
\draw[line width=2] (40,20) -- (90,20);
\draw[line width=2] (40,30) -- (50,30);
\draw(50,30) node[right]{$\pi(\br)\inv(\ii)$};
\draw(50,0) node[right]{$\pi(\br)\inv(\jj)$};
\draw[line width=2] (80,0) -- (90,0);
\draw[line width=2] (80,30) -- (90,30);
\draw[line width=2,color=gray] (90,20) .. controls (95,20) and (95,10) .. (100,10) -- (110,10) .. controls (115,10) and (115,0) .. (120,0) -- (130,0);
\draw[line width=6,color=white] (90,10) .. controls (95,10) and (95,20) .. (100,20) .. controls (105,20) and (105,30) .. (110,30);
\draw[line width=2,color=gray] (90,10) .. controls (95,10) and (95,20) .. (100,20) .. controls (105,20) and (105,30) .. (110,30) -- (130,30);
\draw[line width=6,color=white] (90,0) -- (110,0) .. controls (115,0) and (115,10)  .. (120,10) .. controls (125,10) and (125,20) .. (130,20);
\draw[line width=2,color=gray] (90,0) -- (110,0) .. controls (115,0) and (115,10)  .. (120,10) .. controls (125,10) and (125,20) .. (130,20);
\draw[line width=6,color=white] (90,30) -- (100,30) .. controls (105,30) and (105,20) .. (110,20) -- (120,20) .. controls (125,20) and (125,10) .. (130,10); 
\draw[line width=2,color=gray] (90,30) -- (100,30) .. controls (105,30) and (105,20) .. (110,20) -- (120,20) .. controls (125,20) and (125,10) .. (130,10); 
\draw[line width=1] (90,-5) rectangle (130,35);
\draw[line width=2] (130,0) -- (140,0);
\draw[line width=2] (130,10) -- (140,10);
\draw[line width=2] (130,20) -- (140,20);
\draw[line width=2] (130,30) -- (140,30);
\draw (20,-10) node{$\br$}; 
\draw (110,-10) node{$\brbr$}; 
\end{tikzpicture}\qedhere
\end{proof}

\begin{coro}
\label{C:InvRS}
The set of invariants $\{\pi\}\cup\{\ell_{\ii,\jj},\ 1 \leq \ii<\jj \leq \nn\}$ is inductively stable. 
\end{coro}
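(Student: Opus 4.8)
The plan is to check the defining condition of inductive stability one invariant at a time, separating the permutation invariant $\pi$ from the linking-number invariants $\ell_{\ii,\jj}$, and in each case to read off the required dependence from results already proved in this section.

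For $\pi$ there is nothing new to do: since $\pi$ is a homomorphism, $\pi(\br\cdot x)=\pi(\br)\circ\pi(x)$, so $\pi(\br\cdot x)$ depends only on $\pi(\br)$ and on $x$, which is precisely the inductive-stability condition for the single invariant $\pi$.

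For the linking numbers I would specialize Lemma~\ref{L:inv} to $\brbr=x$, a single generator of $\SSS\nn$, obtaining
\[
\ell_{\ii,\jj}(\br\cdot x)=\ell_{\ii,\jj}(\br)+\ell_{\pi(\br)\inv(\ii),\,\pi(\br)\inv(\jj)}(x).
\]
The first summand $\ell_{\ii,\jj}(\br)$ is one of our invariants. In the second summand, the indices $\pi(\br)\inv(\ii)$ and $\pi(\br)\inv(\jj)$ are determined by $\pi(\br)$ — another invariant — together with the pair $(\ii,\jj)$, while the linking number of the single generator $x$ between any two prescribed strands is given explicitly by Lemma~\ref{L:sigap} as a fixed function of $x$ and of those two indices (the statement covers both $\sig\kk^e$ and $\aa\indi\indii^e$, hence both choices of $\SSS\nn$). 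Thus the whole right-hand side is a function of $\ell_{\ii,\jj}(\br)$, of $\pi(\br)$, and of $x$, with no residual dependence on $\br$; this is the inductive-stability condition for each $\ell_{\ii,\jj}$.

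I do not expect a genuine obstacle, as the corollary merely repackages Lemma~\ref{L:inv}, Lemma~\ref{L:sigap}, and the homomorphism property of $\pi$. The one point deserving a word of care is the ordering convention $\ell_{\indi,\indii}=\ell_{\indii,\indi}$ used in Lemma~\ref{L:inv}: the permutation $\pi(\br)\inv$ may swap the roles of the two indices, but since $\ell_{\pi(\br)\inv(\ii),\,\pi(\br)\inv(\jj)}(x)$ depends only on the unordered index pair and on $x$, this reversal does not affect the argument.
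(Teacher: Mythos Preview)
Your argument is correct and is exactly the approach the paper takes: its proof is the single sentence ``A direct consequence of Lemma~\ref{L:inv} together with the fact that $\pi$ is a homomorphism,'' and you have simply unpacked that sentence. Your invocation of Lemma~\ref{L:sigap} is harmless but unnecessary, since for inductive stability one only needs that $\ell_{\pi(\br)^{-1}(i),\pi(\br)^{-1}(j)}(x)$ depends on $x$ and on the (unordered) index pair, not an explicit formula for it.
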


\begin{proof}
A direct consequence of Lemma~\ref{L:inv} together with the fact that $\pi$ is a homomorphism.
\end{proof}

\subsection{Template}

We now introduce the notion of template of a braid which will be used to parallelize the determination of a representative set of $\BB\nn(\SSS\nn,\ell)$.

\begin{defi}
\label{D:Template}
The \emph{template} of a braid $\br\in\BB\nn$ is the tuple
\[
\tau(\br)=(\pi(\br),\ell_{1,2}(\br),\ell_{1,3}(\br),\ell_{2,3}(\br),\ldots,\ell_{1,n}(\br),\ldots,\ell_{\nno,\nn}(\br))\in \Sym\nn\times \ZZ^{\frac{n(n-1)}2},
\]
where integer $\ell_{i,j}(\br)$ appears before $\ell_{r,s}(\br)$ whenever $(i,j)$ is smaller than $(r,s)$ with respect to the the co-lexicographic ordering on $\NN^2$  : $(i,j)<(p,q)$ if $j<q$ or if $j=q$ and $i<p$.
For a braid template $t$ we denote by $t[\pi]$, \resp $t[\ell_{\ii,\jj}]$ the corresponding component.
For $\ell\in\NN$ we denote by~$\TT\nn(\SSS\nn,\ell)$ the set $\{\tau(\br),\ \br\in\BB\nn(\SSS\nn,\ell)\}$ and by $\TT\nn$ the set~$\{\tau(\br),\ \br\in\BB\nn\}$ of all templates on $\BB\nn$.

\end{defi}

\begin{lemm}
For all $\br\in\BB\nn$ and all $x\in\SSS\nn$, the template $\tau(\br)\ast x=\tau(\br\cdot x)$ depends only on~$\tau(\br)$ and $x$.
\end{lemm}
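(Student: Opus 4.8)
The plan is to read the statement off directly from the inductive stability already established in Corollary~\ref{C:InvRS}, since the template merely repackages the invariants appearing there. By Definition~\ref{D:Template}, $\tau(\br)$ is nothing but the tuple assembling the permutation $\pi(\br)$ and the linking numbers $\ell_{\ii,\jj}(\br)$ for $1\leq\ii<\jj\leq\nn$. Corollary~\ref{C:InvRS} asserts precisely that the family $\{\pi\}\cup\{\ell_{\ii,\jj}\}$ is inductively stable, so by the very definition of inductive stability each of these invariants, evaluated at $\br\cdot x$, is a function of the values of the whole family at $\br$ together with $x$, and not of $\br$ itself. As $\tau(\br\cdot x)$ is built coordinate by coordinate from exactly these evaluated invariants, it is itself a function of $\tau(\br)$ and $x$; this is what defines the operation $\ast$ by $\tau(\br)\ast x=\tau(\br\cdot x)$.

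To make $\ast$ genuinely computable rather than merely abstractly well defined, I would next recall the explicit update rules. For the permutation coordinate, the homomorphism property of $\pi$ gives $\pi(\br\cdot x)=\pi(\br)\circ\pi(x)$, which depends only on $\tau(\br)[\pi]$ and on $x$. For each linking coordinate, Lemma~\ref{L:inv} specialized to $\brbr=x$ gives
\[
\ell_{\ii,\jj}(\br\cdot x)=\ell_{\ii,\jj}(\br)+\ell_{\pi(\br)\inv(\ii),\pi(\br)\inv(\jj)}(x),
\]
under the convention $\ell_{\indi,\indii}=\ell_{\indii,\indi}$ for $\indi>\indii$.

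I would then verify that the right-hand side is a function of $\tau(\br)$ and $x$ only. The term $\ell_{\ii,\jj}(\br)$ is the coordinate $\tau(\br)[\ell_{\ii,\jj}]$; the index pair $(\pi(\br)\inv(\ii),\pi(\br)\inv(\jj))$ is computed from $\tau(\br)[\pi]$ alone; and once this pair is fixed, the value $\ell_{\pi(\br)\inv(\ii),\pi(\br)\inv(\jj)}(x)$ is determined by $x$ alone through the explicit tables of Lemma~\ref{L:sigap}, whether $x$ is an Artin generator $\sigpm\kk$ or a dual generator $\aa\indi\indii^e$. Hence every coordinate of $\tau(\br\cdot x)$ is expressed through $\tau(\br)$ and $x$ without any reference to $\br$.

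The argument is essentially a repackaging of Corollary~\ref{C:InvRS}, so I do not expect a serious obstacle. The only point requiring genuine care is the bookkeeping of the convention $\ell_{\indi,\indii}=\ell_{\indii,\indi}$, so that the conjugated indices $\pi(\br)\inv(\ii),\pi(\br)\inv(\jj)$ are always reordered into a pair actually stored by the template, together with the observation that $\ell_{\pi(\br)\inv(\ii),\pi(\br)\inv(\jj)}(x)$ depends only on $x$ and on that index pair, which is exactly the content of Lemma~\ref{L:sigap}.
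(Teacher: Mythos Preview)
Your proof is correct and follows the same approach as the paper, which simply notes that the statement is a direct consequence of Corollary~\ref{C:InvRS} and Definition~\ref{D:Template}. Your additional explicit formulas for the update of each coordinate are accurate and useful, but go beyond what the paper records.
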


\begin{proof}
A direct consequence of Corollary~\ref{C:InvRS} and Definition~\ref{D:Template}.
\end{proof}

\begin{exam}
Let $t$ be a template of $T_3$ with $t[\pi]$ the cycle $(1\ 3\ 2)$. 
Let us compute the template $t\ast \aa13\inv$. We write $t=(\pi,\ell_{1,2},\ell_{1,3},\ell_{2,3})$.
The inverse of $\pi$ is the cycle $(1\ 2\ 3)$ and so we obtain $\pi\inv(\{1,2\})=\{2,3\}$, $\pi\inv(\{1,3\})=\{1,2\}$ and $\pi\inv(\{2,3\})=\{1,3\}$.
Eventually, from $\ell_{1,2}(\aa13\inv)=1$, $\ell_{1,3}(\aa13\inv)=-1$ and $\ell_{2,3}(\aa13\inv)=-1$ we obtain
\begin{align*}
t\ast \aa13\inv&=\left( (1\ 3\ 2)\circ (1\ 3), \ell_{1,2}+\ell_{2,3}(\aa13\inv), \ell_{1,3}+\ell_{1,2}(\aa13\inv), \ell_{2,3}+\ell_{1,3}(\aa13\inv)\right)\\
&=\left((1\ 2),\ell_{1,2}-1,\ell_{1,3}+1,\ell_{2,3}-1\right).
\end{align*}
\end{exam}

\begin{defi}
\label{D:GeoSet}
For $\ell\in \NN$ and $t\in \TT\nn$ we denote by $\BB\nn(\SSS\nn,\ell,t)$ the set of all braids of $\BB\nn$ with $\SSS\nn$-length $\ell$ and template $t$.
\end{defi} 

By very definitions we have
\begin{equation}
\BB\nn(\SSS\nn,\ell)=\bigsqcup_{t\in \TT\nn(\SSS\nn,\ell)} \BB\nn(\SSS\nn,\ell,t).
\end{equation}

Algorithm~\ref{A:TempRepSet} -- \textsc{TempRepSet} is  a ``template'' version of Algorithm~\ref{A:Rep} -- \textsc{RepSet} for which we assume we dispose of a function $\textsc{Load}(\nn,\ell,t)$ loading a representative set of $\BB\nn(\SSS\nn,\ell,t)$ from a storage memory like a hard disk.
 We also assume we have a function $\textsc{Save}(\WW,\nn,\ell,t)$ saving a representative set of~$\BB\nn(\SSS\nn,\ell,t)$ to that storage memory.
 
\begin{algorithm}[h]
\caption{-- \small \textsc{TempRepSet} : For an integer $\ell \geq 1$ and a template $t$ of $\TT\nn(\SSS\nn,\ell)$, saves a representative set $\texttt{\WW}_{\ll,t}$ of $\BB\nn(\SSS\nn,\ell,t)$ and returns the pair $(\card{\texttt{\WW}_\ell},\sum_{\uu\in\texttt{\WW}_\ell} \omega_{\SSS\nn}(\overline\uu))$}
\label{A:TempRepSet}
\small
\begin{algorithmic}[1]
\Function{TempRepSet}{$\ell,\,t$}
\State $\texttt{\WW}_{\ll,t}\gets \emptyset$
\State $\texttt{\WW}_{\llt,t}\gets \textsc{Load}(\nn,\llt,t)$ \Comment{$\texttt{\WW}_{\llt,t}$ is empty whenever $\ell=1$.}
\State $n_\texttt{g}\gets 0$
\For{$\texttt{x}\in \SSS\nn$}
	\State $t_\texttt{x}\gets t\ast \texttt{x}\inv$
	\State $\texttt{\WW}_{\llo,\texttt{x}}\gets \textsc{Load}(\nn,\llo,t_\texttt{x})$
	\For{$\texttt{\uu}\in \texttt{\WW}_{\llo,\texttt{x}}$}	
        \State $\texttt{\vv}\gets \texttt{\uu}\,\texttt{\xx}$
		\If{$\texttt{\vv}\napp \texttt{\WW}_{\llt,t}$}
            \If{$\texttt{\vv}\napp\texttt{\WW}_{\ll,t}$}
                \State $\texttt{\WW}_{\ell,t}\gets\texttt{\WW}_{\ell,t}\sqcup\{\texttt{\vv}\}$
                \State $\texttt\vv\cdot\omega\gets\texttt{u}\cdot\omega$
            \Else
             \State $\texttt{\ww}\gets$ the word in $\texttt{\WW}_{\ll,t}$ equivalent to $\texttt{\vv}$
            \State $\texttt\ww\cdot\omega\gets\texttt\ww\cdot\omega+\texttt{u}\cdot\omega$
            \EndIf
             \State $n_\texttt{g}\gets n_\texttt{g}+\texttt{u}\cdot\omega$
		\EndIf
	\EndFor
\EndFor
\State $\textsc{Save}(\texttt{W}_{\ell,t},n,\ell,t)$
\State \textbf{return} $(\card{\texttt{W}_{\ell,t}},n_\texttt{g})$
\EndFunction
\end{algorithmic}
\end{algorithm}

In order to compute a representative set of $\BB\nn(\SSS\nn,\ell)$ using Algorithm \textsc{TempRepSet} 
we must first compute the template set $\TT\nn(\SSS\nn,\ell)$. 
From inclusion \eqref{E:B:sub} we obtain
\begin{equation}
\label{E:TempSub}
\TT\nn(\SSS\nn,\ll)\subseteq \{ t\ast x\ \text{for}\ (t,x)\in \TT\nn(\SSS\nn,\llo)\times \SSS\nn\}.
\end{equation}
A template $t$ from the set in the right-hand side of  \eqref{E:TempSub} belongs to $\TT\nn(\SSS\nn,\ll)$ if and only
if there exists a braid $\br\in \BB\nn(\SSS\nn,\ell)$ such that $\tau(\br)=t$.
Hence a full run consists in calling the function \textsc{TempRepSet} for each template $t$ from the set in the right-hand side of~\eqref{E:TempSub}.
Such a template $t$ will belongs to~$\TT\nn(\SSS\nn,\ll)$ if and only if the returned value is different from $(0,0)$.
Putting all pieces together we obtain :

\begin{algorithm}
\caption{-- \small \textsc{Combi} : Returns a pair of arrays of positive integers $(n_\texttt{s},n_\texttt{g})$ satisfying relations  $n_\texttt{s}[\ell]=s(\BB\nn,\SSS\nn;\ell)$ and $n_\texttt{g}[\ell]=g(\BB\nn,\SSS\nn;\ell)$ for all $\ell\leq\ell_{\text{max}}$.}
\label{A:NumB}
\small
\begin{algorithmic}[1]
\Function{Combi}{$\ell_{\text{max}}$}
\State $n_\texttt{s}[0]\gets 1$
\State $n_\texttt{g}[0]\gets 1$
\State $T\gets \{(1_{\Sym\nn}, 0, \ldots, 0)\}$ \Comment{template set $\TT\nn(\SSS\nn,0)$}
\For{$\ell$ \textbf{from} $1$ \textbf{to} $\ell_{\text{max}}$}
	\State $T'\gets\emptyset$
	\State $n_\texttt{s}[\ell]\gets 0$;\ $n_\texttt{g}[\ell]\gets 0$
	\For{$t\in T$}
		\For{$\texttt{x}\in \SSS\nn$}
			\State $t_\texttt{x}\gets t\ast \texttt{x}$
			\State $(n'_\texttt{s},n'_\texttt{g})\gets \textsc{TempRepSet}(\ell,t_\texttt{x})$
			\If{$(n'_\texttt{s},n'_\texttt{g})\not=(0,0)$}
				\State $T'\gets T'\cup\{t'\}$
				\State $n_\texttt{s}[\ell]\gets n_\texttt{s}[\ell]+n_\texttt{s}'$
				\State $n_\texttt{g}[\ell]\gets n_\texttt{g}[\ell]+n'_\texttt{g}$
			\EndIf
		\EndFor
	\EndFor
	\State $T\gets T'$\Comment{template set $\TT\nn(\SSS\nn,\ell)$}
\EndFor
\State \textbf{return} $(n_\texttt{s},n_\texttt{g})$
\EndFunction
\end{algorithmic}
\end{algorithm}

 \section{Reduced braid templates}

 \label{S:Reduced}

 Here again $n$ is an integer $\geq2$ and $\SSS\nn$ denotes either $\ABP\nn$ or $\ABKL\nn$.
 Experiments using Algorithm~\ref{A:TempRepSet} -- \textsc{TempRepSet} suggest that some sets~$\BB\nn(\SSS\nn,\ell,t)$ are in bijection for a given $\ell$.
 We can use this fact to improve the efficiency of Algorithm~\ref{A:NumB} -- \textsc{Combi} and reduce the needed storage space.

 \subsection{Stable word maps}
 
 \begin{defi}
 \label{D:Stable}
 A bijection $\mu$ of the set of $\SSS\nn$-words is \emph{$\SSS\nn$-stable} if
 
 \HH -- $i)$ for all $\SSS\nn$-word $\ww$ we have $|\mu(\ww)|=|\ww|$;
 
 \HH -- $ii)$ for all $\SSS\nn$-words $\uu$ and $\vv$ we have $\mu(\uu)\equiv\mu(\vv)\Leftrightarrow \uu\equiv\vv$;
 
 \HH -- $iii)$ for all $\SSS\nn$-word $\uu$ the template $\tau(\overline{\mu(\uu)})$ depends only on $\tau(\overline{\uu})$.
 
 \noindent For such a $\SSS\nn$-stable map $\mu$ we denote by $\mu^T$ the map of $\TT\nn$ defined by 
 \[
 \mu^T(t)=\tau(\overline{\mu(\uu)})
 \]
 where $\uu$ is any $\SSS\nn$-word satisfying $\tau(\overline{\uu})=t$.
 We also define a bijection $\overline\mu$ of $\BB\nn$ by 
 \[
 \overline\mu(\br)=\overline{\mu(\uu)},
 \] 
 where $\uu$ is any $\SSS\nn$-word satisfying $\overline{\uu}=\br$.
 \end{defi}

Whenever $\mu$ is $\SSS\nn$-stable, Condition $iii)$ of Definition~\ref{D:Stable} guarantees that the template of the image by $\overline{\mu}$ of a braid $\br$ does not depend on $\br$ but on its template~$t$ and so $\mu^T$ is well defined.

\begin{lemm}
\label{L:Stable}
For every $\SSS\nn$-stable bijection $\mu$, we have

\HH -- $i)$ $\mu^T$ is a permutation of $\TT\nn$,

\HH -- $ii)$ a $\SSS\nn$-word $\uu$ is geodesic if and only if $\mu(\uu)$ is.
 \end{lemm}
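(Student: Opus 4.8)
My plan is to establish part $ii)$ first and then use it to drive part $i)$, so I would present them in that order. For part $ii)$, I would fix an $\SSS\nn$-word $\uu$ and show that $\mu$ carries the $\equiv$-class of $\uu$ bijectively and length-preservingly onto the $\equiv$-class of $\mu(\uu)$. The forward inclusion is the ``$\Leftarrow$'' half of condition $ii)$ of Definition~\ref{D:Stable}, while surjectivity onto the class of $\mu(\uu)$ comes from writing any $\ww\equiv\mu(\uu)$ as $\ww=\mu(\vv)$ (using that $\mu$ is onto) and applying the ``$\Rightarrow$'' half. Since $\mu$ is injective and preserves length by condition $i)$, the least length attained in the class of $\uu$ equals that attained in the class of $\mu(\uu)$, that is $\len{\SSS\nn}{\overline{\uu}}=\len{\SSS\nn}{\overline{\mu(\uu)}}$; as $|\mu(\uu)|=|\uu|$, the word $\uu$ realises this minimum exactly when $\mu(\uu)$ does. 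Note that this argument uses only conditions $i)$ and $ii)$.

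From part $ii)$ I would immediately read off that the induced bijection $\overline\mu$ of $\BB\nn$ preserves $\SSS\nn$-length: taking a geodesic representative $\uu$ of a braid $\br$, the word $\mu(\uu)$ is a geodesic representative of $\overline\mu(\br)$ of length $|\uu|=\len{\SSS\nn}{\br}$, so $\len{\SSS\nn}{\overline\mu(\br)}=\len{\SSS\nn}{\br}$. Consequently $\overline\mu$, and hence its inverse $\overline\mu\inv$, is a length-preserving bijection of $\BB\nn$.

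For part $i)$, I would start from the defining identity $\mu^T(\tau(\br))=\tau(\overline\mu(\br))$, valid by condition $iii)$, that is $\mu^T\circ\tau=\tau\circ\overline\mu$, where $\tau\colon\BB\nn\to\TT\nn$ is surjective and $\overline\mu$ is a bijection; surjectivity of $\mu^T$ is then immediate, since any $s=\tau(\br')$ equals $\mu^T(\tau(\overline\mu\inv(\br')))$. For injectivity I would localise to bounded length: for $\ell\in\NN$ let $\TT\nn^{\leq\ell}$ denote the set of templates of braids of $\SSS\nn$-length at most $\ell$, which is finite since only finitely many braids are represented by words of length $\leq\ell$. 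Because $\overline\mu$ and $\overline\mu\inv$ preserve length, the identity above shows $\mu^T$ maps $\TT\nn^{\leq\ell}$ onto itself; a surjective self-map of a finite set is bijective, so $\mu^T$ is injective on each $\TT\nn^{\leq\ell}$. Since any two templates $t_1,t_2\in\TT\nn$ lie in a common $\TT\nn^{\leq\ell}$—it suffices to take $\ell$ at least the minimal $\SSS\nn$-length of a braid with template $t_1$ and of one with template $t_2$—global injectivity follows, and $\mu^T$ is a permutation of $\TT\nn$.

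The step I expect to be the crux is this final localisation. A single template is in general realised at several different $\SSS\nn$-lengths—already the trivial template is carried by $\ew$ and by nontrivial pure braids lying in the commutator subgroup—so the exact-length sets $\TT\nn(\SSS\nn,\ell)$ overlap, and one cannot run the finite-set argument on them directly to obtain injectivity on all of $\TT\nn$. Passing to the nested, exhausting balls $\TT\nn^{\leq\ell}$, each finite and stable under $\mu^T$, is what converts surjectivity into global injectivity, and it is the one place where finiteness, rather than purely formal manipulation of the commuting square $\mu^T\circ\tau=\tau\circ\overline\mu$, enters.
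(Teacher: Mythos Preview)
Your proof is correct and close in spirit to the paper's, with two organisational differences worth noting. First, the paper proves $i)$ before $ii)$, while you reverse the order and use $ii)$ (more precisely, the length-preservation of $\overline\mu$) as input to $i)$; both orderings work, since the fact that $\overline\mu$ preserves $\SSS\nn$-length follows directly from conditions $i)$ and $ii)$ of Definition~\ref{D:Stable}. Second, for injectivity in $i)$ the paper localises to the exact-length sets $\TT\nn(\SSS\nn,\ell)$ and introduces the invariant $\lambda(t)=\min\{\ell:t\in\TT\nn(\SSS\nn,\ell)\}$, arguing that $\mu^T$ preserves $\lambda$ and then using that $\mu^T$ is bijective on $\TT\nn(\SSS\nn,\lambda(t))$; you instead localise to the nested balls $\TT\nn^{\leq\ell}$. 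Your closing paragraph correctly identifies the subtlety that the spheres $\TT\nn(\SSS\nn,\ell)$ overlap, so knowing that $\mu^T$ permutes each one does not by itself give global injectivity; the ball formulation you chose sidesteps this and makes the finite-set step immediate, while the paper's $\lambda$-based argument handles the same issue via a slightly different bookkeeping. Both routes are short and essentially equivalent; yours makes the role of finiteness a touch more transparent.
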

 
 \begin{proof}
By $i)$ and $ii)$ of Definition~\ref{D:Stable} we obtain that $\mu$ induces a permutation on the finite set $\BB\nn(\SSS\nn,\ell)$. It follows that $\mu^T$ induces a permutation on $\TT\nn(\SSS\nn,\ell)$. For a template $t$ of $T_n$ there exists an integer $\ell\in\NN$ such that $t$ belongs to $T_n(\SSS\nn,\ell)=\mu^T\left(T_n(\SSS\nn,\ell)\right)$ and so $\mu^T$ is surjective.
We now prove the injectivity. For $t\in \TT\nn$, we denote by $\lambda(t)$ the minimal integer $\ell$ such that $t$ belongs to $\TT\nn(\SSS\nn,\ell)$.
Since~$\mu^T$ induces a permutation on  $\TT\nn(\SSS\nn,\ell)$ for all $\ell$ we have $\lambda(\mu^T(t))=\lambda(t)$.
Let~$t$ and $t'$ be two templates of $\TT\nn$ satisfying $\mu^T(t)=\mu^T(t')$. By the above we have 
\[\lambda(t)=\lambda(\mu^T(t))=\lambda(\mu^T(t'))=\lambda(t'),\]
and so there exists $\ell$ such that $t$ and $t'$ belong to $\TT\nn(\SSS\nn,\ell)$.
Since~$\mu^T$ induces a permutation on $T_\nn(\SSS\nn,\ell)$ we obtain $t=t'$, proving the injectivity of $\mu^T$.

Let us now prove $ii)$. Let $\uu$ be a $\SSS\nn$-word.
If the word $\vv=\mu(\uu)$ is not geodesic then there exists a strictly shorter $\SSS\nn$-word $\vv'$ equivalent to $\vv$.
As $\mu$ is a bijection we put $\uu'=\mu\inv(\vv')$.
We obtain $\mu(\uu)=\vv\equiv\vv'=\mu(\uu')$.
From conditions $ii)$ and $i)$ of Definition~\ref{D:Stable} we have $\uu\equiv\uu'$ together with $|\uu|=|\vv|>|\vv'|=|\uu'|$ and so $\uu$ is not geodesic.
A similar argument establishes the converse implication.
 \end{proof}

\subsection{Examples}

Let us now introduce some useful examples of $\SSS\nn$-stable bijections. 
Eventually such a $\SSS\nn$-stable bijection $\mu$ will be used to obtain a representative set of~$\TT\nn(\SSS\nn,\ell,\mu^T(t))$ from a representative set of $\TT\nn(\SSS\nn,\ell,t)$. This is why it is necessary to specify how to obtain $\mu^T(t)$ from $t$ in propositions~\ref{P:inv:stable}, \ref{P:theta:stable}, \ref{P:PhiAcc} and~\ref{P:phi:stable}.
However the reader may choose to ignore these parts without affecting the understanding of the rest ot the article.

\subsubsection{First examples}

\begin{prop}
\label{P:inv:stable}
The map $\mathrm{inv}_{\SSS\nn}$ of $\SSS\nn$-words defined by 
\[
\mathrm{inv}_{\SSS\nn}(x_1\cdots x_t)=x_t\inv\cdots x_1\inv
\]
 is $\SSS\nn$-stable.
 Moreover for every template $t\in\TT\nn$  we have
 \[
 \mathrm{inv}_{\SSS\nn}^T(t)[\pi]=t[\pi]\inv\quad\text{and}\quad\mathrm{inv}_{\SSS\nn}^T(t)[\ell_{\ii,\jj}]=-t[\ell_{t[\pi](\ii),t[\pi](\jj)}]\ \ \text{for $1\leq\ii<\jj\leq\nn$.}
 \]
\end{prop}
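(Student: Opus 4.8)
The plan is to reduce everything to the single observation that $\mathrm{inv}_{\SSS\nn}$ realizes, at the level of braids, the group inversion map $\br\mapsto\br\inv$. First I would note that for each letter $x\in\SSS\nn$ the inverse $x\inv$ again lies in $\SSS\nn$ (since $\SSS\nn=\ABPP\nn\sqcup(\ABPP\nn)\inv$, resp.\ $\ABKL\nn$, is closed under inversion by its very definition), so $\mathrm{inv}_{\SSS\nn}$ genuinely maps $\SSS\nn$-words to $\SSS\nn$-words; it is visibly an involution, hence a bijection, and it manifestly preserves length, giving condition $i)$. The fundamental identity is $\overline{\mathrm{inv}_{\SSS\nn}(x_1\cdots x_t)}=\overline{x_t\inv\cdots x_1\inv}=(\overline{x_1\cdots x_t})\inv$, so that $\overline{\mathrm{inv}_{\SSS\nn}}(\br)=\br\inv$.

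Condition $ii)$ then follows immediately: $\mathrm{inv}_{\SSS\nn}(\uu)\equiv\mathrm{inv}_{\SSS\nn}(\vv)$ means $\overline{\uu}\inv=\overline{\vv}\inv$, which is equivalent to $\overline{\uu}=\overline{\vv}$, i.e.\ to $\uu\equiv\vv$, because inversion is a bijection of $\BB\nn$. For condition $iii)$, together with the ``moreover'' claim, I would compute the template of $\br\inv$ directly from that of $\br$. The permutation component is immediate: since $\pi$ is a homomorphism, $\pi(\br\inv)=\pi(\br)\inv=t[\pi]\inv$.

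For the linking numbers I would invoke Lemma~\ref{L:inv} with the second braid equal to $\br\inv$. Writing $\pi$ for $\pi(\br)$ and using that $\br\cdot\br\inv$ is the identity, whose linking numbers all vanish, the lemma gives $0=\ell_{i,j}(\br)+\ell_{\pi\inv(i),\pi\inv(j)}(\br\inv)$, whence $\ell_{\pi\inv(i),\pi\inv(j)}(\br\inv)=-\ell_{i,j}(\br)$. Replacing $i$ by $\pi(i)$ and $j$ by $\pi(j)$ yields $\ell_{i,j}(\br\inv)=-\ell_{\pi(i),\pi(j)}(\br)$, which is exactly $\mathrm{inv}_{\SSS\nn}^T(t)[\ell_{\ii,\jj}]=-t[\ell_{t[\pi](\ii),t[\pi](\jj)}]$. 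Since both $\pi(\br\inv)$ and every $\ell_{i,j}(\br\inv)$ are thus expressed through $\tau(\br)=t$ alone, $\tau(\overline{\mathrm{inv}_{\SSS\nn}(\uu)})$ depends only on $\tau(\overline{\uu})$, establishing $iii)$ and simultaneously the explicit formulas for $\mathrm{inv}_{\SSS\nn}^T$.

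The only point demanding care is the bookkeeping of indices in the last step: the pair $(\pi(i),\pi(j))$ need not be increasing, so the computation silently uses the stated convention $\ell_{a,b}=\ell_{b,a}$, and one must check that reindexing by $\pi$ (a bijection of $\{1,\dots,\nn\}$) correctly ranges over all the required ordered pairs. Beyond this indexing subtlety the argument is routine, as the substantive geometric input has already been isolated in Lemma~\ref{L:inv}.
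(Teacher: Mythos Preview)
Your argument is correct and follows essentially the same route as the paper: reduce to the fact that $\overline{\mathrm{inv}_{\SSS\nn}(\uu)}=\overline{\uu}\inv$, use that $\pi$ is a homomorphism for the permutation component, and apply Lemma~\ref{L:inv} to the product of $\br$ with its inverse to extract the linking numbers. The only cosmetic difference is that the paper applies Lemma~\ref{L:inv} to $\overline{\vv}\,\overline{\uu}$ (with $\overline{\vv}=\overline{\uu}\inv$ as the first factor), which yields the formula $\ell_{i,j}(\overline{\vv})=-\ell_{\pi(\overline{\uu})(i),\pi(\overline{\uu})(j)}(\overline{\uu})$ directly, whereas you apply it to $\br\cdot\br\inv$ and then reindex by $\pi$; both are equally valid.
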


\begin{proof}

Condition $i)$ of Definition~\ref{D:Stable} is immediate. 
For two $\SSS\nn$-words $\uu$ and~$\vv$, the relation $\uu\equiv\vv$ is equivalent to $\vv\inv\uu\equiv\ew$ which is itself equivalent to $\vv\inv\equiv\uu\inv$, hence Condition $ii)$ is established. 
Let $\uu$ be a $\SSS\nn$-word and~$\vv$ be $\mathrm{inv}_{\SSS\nn}(\uu)$. 
By definition, we have $\overline{\vv}=\overline{\uu}\inv$.
Since $\pi$ is a homomorphism we have $\pi(\overline{\vv})=\pi(\overline{\uu})\inv$.
Let $1\leq\ii<\jj\leq\nn$ be two integers. 
From $1=\overline{\vv}\overline{\uu}$, Lemma~\ref{L:inv} implies
\[
0=\ell_{\ii,\jj}(1)=\ell_{\ii,\jj}(\overline{\vv})+\ell_{\pi(\overline{\uu})(\ii),\pi(\overline{\uu})(\jj)}(\overline{\uu})
\]
and so $\ell_{\ii,\jj}(\overline{\vv})=-\ell_{\pi(\overline{\uu})(\ii),\pi(\overline{\uu})(\jj)}(\overline{\uu})$.
Therefore Condition $iii)$ is also satisfied.
\end{proof}

We now point out a divergence between the Artin and dual presentations of the braid group $\BB\nn$.

\begin{prop}
\label{P:theta:stable}
For $\nn\geq 3$, the map of $\SSS\nn$-words $\theta_{\SSS\nn}$ defined by 
\[
\theta_{\SSS\nn}(x_1\cdots x_t)=x_1\inv\cdots x_t\inv
\]
is $\SSS\nn$-stable if and only if $\SSS\nn=\ABP\nn$.
 Moreover for every template $t\in\TT\nn$  we have
 \[
 \theta_{\ABP\nn}^T(t)[\pi]=t[\pi]\quad\text{and}\quad
  \theta_{\ABP\nn}^T(t)[\ell_{\ii,\jj}]=-t[\ell_{\ii,\jj}]\ \ \text{for $1\leq\ii<\jj\leq\nn$.}
 \]
\end{prop}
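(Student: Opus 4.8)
The plan is to treat the two presentations separately, the whole dichotomy resting on one structural observation: the assignment $x\mapsto x\inv$ on the letters of $\SSS\nn$ extends to a length‑preserving endomorphism of the free monoid, and $\theta_{\SSS\nn}$ is exactly this letter‑inversion performed \emph{without} reversing the order of the letters. Consequently $\theta_{\SSS\nn}$ respects braid equivalence (Condition $ii)$ of Definition~\ref{D:Stable}) precisely when letter‑inversion carries each defining relation of the semigroup presentation to a valid relation. Since the order of letters is kept, $\theta_{\SSS\nn}$ sends a relation word to the inverse of the reversed word, so it automatically preserves any relation both of whose sides are palindromes, but it need not preserve a relation that lacks this symmetry. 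This is what separates the two cases.

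For $\SSS\nn=\ABP\nn$ I would first check that $\theta_{\ABP\nn}$ descends to a group homomorphism $\overline\theta$ of $\BB\nn$ with $\overline\theta(\sig\ii)=\siginv\ii$, by verifying that letter‑inversion sends each relation of \eqref{E:PBn} and \eqref{E:Pssinv} to a relation. The relations \eqref{E:Pssinv} and the commutation relations are immediate (inverses of commuting elements commute), while the braid relation $\sig\ii\sig\jj\sig\ii=\sig\jj\sig\ii\sig\jj$ is preserved because each side is a palindrome, so $\theta_{\ABP\nn}$ maps each side to its inverse and the equality of the two sides forces the equality of their images. As $\theta_{\ABP\nn}$ is an involution on words, $\overline\theta$ is an automorphism; this gives Condition $i)$ trivially and Condition $ii)$ at once, since $\uu\equiv\vv\Leftrightarrow\overline\uu=\overline\vv\Leftrightarrow\overline\theta(\overline\uu)=\overline\theta(\overline\vv)\Leftrightarrow\theta_{\ABP\nn}(\uu)\equiv\theta_{\ABP\nn}(\vv)$, the middle step using bijectivity of $\overline\theta$. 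For Condition $iii)$ I would prove the two announced formulas. The permutation part follows from $\pi\circ\overline\theta=\pi$, which holds on generators as $\pi(\siginv\ii)=\pi(\sig\ii)$, giving $\theta_{\ABP\nn}^T(t)[\pi]=t[\pi]$. For the linking numbers I would show $\ell_{\ii,\jj}(\overline{\theta_{\ABP\nn}(\uu)})=-\ell_{\ii,\jj}(\overline\uu)$ by induction on $|\uu|$: the one‑letter case is Lemma~\ref{L:sigap} (flipping the sign of a generator negates its linking numbers), and the inductive step applies Lemma~\ref{L:inv} to $\uu$ and to $\theta_{\ABP\nn}(\uu)$, the key point being that the shifted index pairs $\pi(\overline\uu)\inv(\ii),\pi(\overline\uu)\inv(\jj)$ coincide for the two words because $\overline\theta$ preserves the permutation. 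This yields $\theta_{\ABP\nn}^T(t)[\ell_{\ii,\jj}]=-t[\ell_{\ii,\jj}]$.

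For $\SSS\nn=\ABKL\nn$ I would refute Condition $ii)$ with an explicit pair drawn from the non‑palindromic relation \eqref{E:DualNonCommutativeRelation}. Taking $\uu=\aa12\aa23$ and $\vv=\aa23\aa13$ (valid for every $\nn\geq3$), relation \eqref{E:DualNonCommutativeRelation} gives $\uu\equiv\vv$, whereas $\theta_{\ABKL\nn}(\uu)=\aainv12\aainv23$ and $\theta_{\ABKL\nn}(\vv)=\aainv23\aainv13$. Feeding Lemma~\ref{L:sigap} into Lemma~\ref{L:inv} (and keeping track of the convention $\ell_{\indi,\indii}=\ell_{\indii,\indi}$), one computes the linking triples $(\ell_{1,2},\ell_{1,3},\ell_{2,3})$ to be $(-1,-1,0)$ for $\overline{\theta_{\ABKL\nn}(\uu)}$ and $(-1,1,-2)$ for $\overline{\theta_{\ABKL\nn}(\vv)}$. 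Since linking numbers are braid invariants, these differ, so $\theta_{\ABKL\nn}(\uu)\not\equiv\theta_{\ABKL\nn}(\vv)$ although $\uu\equiv\vv$; hence Condition $ii)$ fails and $\theta_{\ABKL\nn}$ is not $\SSS\nn$‑stable.

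The relation checks and Conditions $i)$, $ii)$ are painless; the main work is Condition $iii)$ in the Artin case, where the induction must thread Lemma~\ref{L:inv} correctly, the crucial input being that $\overline\theta$ fixes the permutation so that the index pairs arising in the recursion are unchanged under $\theta_{\ABP\nn}$. The only other delicate point is the careful application of the $\ell_{\indi,\indii}=\ell_{\indii,\indi}$ convention when evaluating the two linking triples of the dual counterexample.
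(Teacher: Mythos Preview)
Your proposal is correct and follows essentially the same approach as the paper: verify Conditions $i)$--$iii)$ of Definition~\ref{D:Stable} for $\theta_{\ABP\nn}$ by checking that letter-inversion preserves the Artin relations, deducing the permutation and linking-number formulas via $\pi(\sig\ii)=\pi(\siginv\ii)$ and an induction through Lemma~\ref{L:inv}, and then refute Condition~$ii)$ for $\theta_{\ABKL\nn}$ using the very same counterexample $\aa12\aa23\equiv\aa23\aa13$ distinguished by $\ell_{1,3}$. The only cosmetic differences are your explicit ``palindrome'' framing of why the braid relation is preserved and your computation of the full linking triple rather than the single entry $\ell_{1,3}$ used in the paper.
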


\begin{proof} 
By construction, Condition $i)$ of Definition \ref{D:Stable} is satisfied.
Let us verify Condition~$ii)$ for $\theta_{\ABP\nn}$.
It is sufficient to prove $\theta(\uu)\equiv\theta(\vv)$ whenever $\uu=\vv$ is 
a relation of the Artin's semigroup presentation of $\BB\nn$.
Let $\ii\in[1,\nno]$.
We have~$\theta(\sig\ii\siginv\ii)=\siginv\ii\sig\ii\equiv \ew$,  
$\theta(\siginv\ii\sig\ii)=\sig\ii\siginv\ii\equiv \ew$ and so we get 
\[
\theta(\sig\ii\siginv\ii)=\theta(\siginv\ii\sig\ii)=\theta(\ew).
\]
Assume now $\ii$ and $\jj$ are integers of $[1,\nno]$ satisfying $|\ii-\jj|\geq2$.
From $\sig\ii\sig\jj\equiv\sig\jj\sig\ii$ we obtain successively 
\[
\siginv\jj\sig\ii\sig\jj\equiv\sig\ii,\quad  \siginv\jj\sig\ii\equiv\sig\ii\siginv\jj,\quad  \siginv\ii\siginv\jj\sig\ii\equiv\siginv\jj,\quad \siginv\ii\siginv\jj\equiv\siginv\jj\siginv\ii,
\] 
and so $\theta(\sig\ii\sig\jj)=\siginv\ii\siginv\jj\equiv\siginv\jj\siginv\ii=\theta(\sig\jj\sig\ii)$.
A similar sequence of equivalences implies $\theta(\sig\ii\sig\jj\sig\ii)\equiv\theta(\sig\jj\sig\ii\sig\jj)$ for $\ii,\jj$ in $[1,\nno]$ with $|\ii-\jj|\leq 1$.

Let $\uu$ be an $\ABP\nn$-word.
For $\xx\in\ABP\nn$, the permutation~$\pi(\overline{x})$ is a transposition and so the relation $\pi(\overline{x})=\pi(\overline{x\inv})$ holds. 
Hence we obtain~$\pi(\overline{\theta_{\ABP\nn}(\uu)})=\pi(\overline{\uu})$.
We denote by $\uu_k$ the prefix of~$\uu$ of length $\kk$.
An immediate induction on $\kk$, together with~$\pi(\overline{\theta_{\ABP\nn}(\uu_\kk)})=\pi(\overline{\uu_\kk})$ and Lemma~\ref{L:inv} establish $\ell_{i,j}(\overline{\theta_{\ABP\nn}(\uu)})=-\ell_{i,j}(\overline{\uu})$.
Condition $iii)$ is then satisfied by $\theta_{\ABP\nn}$.

Let us focus now on the map $\theta_{\ABKL\nn}$.
In $\BB4$ we have the relation $\aa12\aa23\equiv\aa23\aa13$ while $\aa12\inv\aa23\inv$ is not equivalent to $\aa23\inv\aa13\inv$ as shown by the following diagrams. 
\begin{center}
\begin{tikzpicture}[x=0.06cm,y=0.06cm]
\draw[line width=2] (0,0) .. controls (5,0) and (5,10) .. (10,10) .. controls (15,10) and (15,20) .. (20,20);
\draw[line width=6,color=white] (0,10) .. controls (5,10) and (5,0) .. (10,0) -- (20,0);
\draw[line width=2] (0,10) .. controls (5,10) and (5,0) .. (10,0) -- (20,0);
\draw[line width=6,color=white] (0,20) -- (10,20) .. controls (15,20) and (15,10) .. (20,10);
\draw[line width=2] (0,20) -- (10,20) .. controls (15,20) and (15,10) .. (20,10);
\draw(5,-8) node{\small$\aa12$};
\draw(15,-8) node{\small$\aa23$};
\draw[dashed](10,-5) -- (10,23);
\begin{scope}[shift={(36,0)}]
\draw[line width=2] (0,10) .. controls (5,10) and (5,20) .. (10,20);

\draw[line width=6,color=white] (0,20) .. controls (5,20) and (5,10) .. (10,10);
\draw[line width=2] (0,20) .. controls (5,20) and (5,10) .. (10,10) ;

\draw[line width=6,color=white] (0,0) -- (10,0) .. controls (15,0) and (15,10) .. (20,10) .. controls (25,10) and (25,20) .. (30,20) -- (40,20);
\draw[line width=2] (0,0) -- (10,0) .. controls (15,0) and (15,10) .. (20,10) .. controls (25,10) and (25,20) .. (30,20) -- (40,20);

\draw[line width=6,color=white] (10,20) -- (20,20) .. controls (25,20) and (25,10) .. (30,10) .. controls (35,10) and (35,0) .. (40,0);
\draw[line width=2] (10,20) -- (20,20) .. controls (25,20) and (25,10) .. (30,10) .. controls (35,10) and (35,0) .. (40,0);

\draw[line width=6,color=white] (10,10) .. controls (15,10) and (15,0) .. (20,0) -- (30,0) .. controls (35,0) and (35,10) .. (40,10);
\draw[line width=2] (10,10) .. controls (15,10) and (15,0) .. (20,0) -- (30,0) .. controls (35,0) and (35,10) .. (40,10);

\draw(5,-8) node{\small$\aa23$};
\draw(25,-8) node{\small$\aa13$};
\draw[dashed](10,-5) -- (10,23);
\end{scope}
\draw(28,10) node{$\approx$};

\begin{scope}[shift={(100,0)}]

\draw[line width=2] (0,10) .. controls (5,10) and (5,0) .. (10,0) -- (20,0);
\draw[line width=6,color=white] (0,20) -- (10,20) .. controls (15,20) and (15,10) .. (20,10);
\draw[line width=2] (0,20) -- (10,20) .. controls (15,20) and (15,10) .. (20,10);
\draw[line width=6,color=white] (0,0) .. controls (5,0) and (5,10) .. (10,10) .. controls (15,10) and (15,20) .. (20,20);
\draw[line width=2] (0,0) .. controls (5,0) and (5,10) .. (10,10) .. controls (15,10) and (15,20) .. (20,20);
\draw(5,-6.5) node{\small$\aa12\inv$};
\draw(15,-6.5) node{\small$\aa23\inv$};
\draw[dashed](10,-5) -- (10,23);
\begin{scope}[shift={(36,0)}]
\draw[line width=2] (0,0) -- (10,0) .. controls (15,0) and (15,10) .. (20,10);
\draw[line width=6,color=white] (0,20) .. controls (5,20) and (5,10) .. (10,10) .. controls (15,10) and (15,0) .. (20,0);
\draw[line width=2] (0,20) .. controls (5,20) and (5,10) .. (10,10) .. controls (15,10) and (15,0) .. (20,0);
\draw[line width=6,color=white] (0,10) .. controls (5,10) and (5,20) .. (10,20) -- (20,20) .. controls (25,20) and (25,10) .. (30,10) .. controls (35,10) and (35,0) .. (40,0);
\draw[line width=2] (0,10) .. controls (5,10) and (5,20) .. (10,20) -- (20,20) .. controls (25,20) and (25,10) .. (30,10) .. controls (35,10) and (35,0) .. (40,0);
\draw[line width=6,color=white] (20,0) -- (30,0) .. controls (35,0) and (35,10) .. (40,10);
\draw[line width=2] (20,0) -- (30,0) .. controls (35,0) and (35,10) .. (40,10);
\draw[line width=6,color=white] (20,10) .. controls (25,10) and (25,20) .. (30,20) -- (40,20);
\draw[line width=2] (20,10) .. controls (25,10) and (25,20) .. (30,20) -- (40,20);
\draw(5,-6.5) node{\small$\aa23\inv$};
\draw(25,-6.5) node{\small$\aa13\inv$};
\draw[dashed](10,-5) -- (10,23);
\end{scope}
\draw(28,10) node{$\not\approx$};
\end{scope}

\draw(88,10) node{while};
\end{tikzpicture}
\end{center}
The non isotopy of the two right-most diagrams can be established evaluating~$\ell_{1,3}$ for example.
Indeed we have $\ell_{1,3}(\aa12\inv\aa23\inv)=-1$ and $\ell_{1,3}(\aa23\inv\aa13\inv)=1$.
\end{proof}
 
 \subsubsection{Garside homorphisms}
 
 We now consider the ``word version" of the classical and dual Garside automorphisms of $\BB\nn$.
 
 \begin{defi} 
The Garside automorphism of $\BB\nn$ is  $\overline{\Phi}_\nn(\br)=\Delta_\nn\,\br\,\Delta_\nn\inv$ where $\Delta_\nn$ is given by $\Delta_2=\sig1$ and $\Delta_\kk=\sig1\cdots\sig\kko\Delta_\kko$ for $\kk\geq 3$.
 \end{defi}
 
For example we have $\Delta_4=\sig1\sig2\sig3\cdot\Delta_3=\sig1\sig2\sig3\cdot\sig1\sig2\cdot\Delta_2=\sig1\sig2\sig3\cdot\sig1\sig2\cdot\sig1$, which corresponds to the following diagram:
 \begin{center}
 \begin{tikzpicture}[x=0.045cm,y=0.045cm]
  \draw[line width=2,color=white!75!black] (0,0) .. controls (5,0) and (5,10) .. (10,10) .. controls (15,10) and (15,20) .. (20,20) .. controls (25,20) and (25,30) .. (30,30) -- (60,30);
  
  \draw[line width=6,color=white] (0,10) .. controls(5,10) and (5,0) .. (10,0) -- (30,0) .. controls (35,0) and (35,10) .. (40,10) .. controls (45,10) and (45,20) .. (50,20) -- (60,20);
  \draw[line width=2,color=white!55!black] (0,10) .. controls(5,10) and (5,0) .. (10,0) -- (30,0) .. controls (35,0) and (35,10) .. (40,10) .. controls (45,10) and (45,20) .. (50,20) -- (60,20);
  
  \draw[line width=6,color=white] (0,20) -- (10,20) .. controls (15,20) and (15,10) .. (20,10) -- (30,10) .. controls (35,10) and (35,0) .. (40,0) -- (50,0) .. controls(55,0) and (55,10) .. (60,10);
  \draw[line width=2,color=white!35!black] (0,20) -- (10,20) .. controls (15,20) and (15,10) .. (20,10) -- (30,10) .. controls (35,10) and (35,0) .. (40,0) -- (50,0) .. controls(55,0) and (55,10) .. (60,10);

  \draw[line width=6,color=white] (0,30) -- (20,30) .. controls (25,30) and (25,20) .. (30,20) -- (40,20) .. controls (45,20) and (45,10) .. (50,10) ..controls(55,10) and (55,0) .. (60,0); 
  \draw[line width=2] (0,30) -- (20,30) .. controls (25,30) and (25,20) .. (30,20) -- (40,20) .. controls (45,20) and (45,10) .. (50,10) ..controls(55,10) and (55,0) .. (60,0); 
   \end{tikzpicture}
 \end{center}
For all $\kk\in[1,\nn]$ we have:
\begin{equation}
\label{E:Delta:pi}
\pi(\Delta_\nn)(\kk)=\nnp-\kk.
\end{equation}
As we can notice in the previous diagram, the braid $\Delta_\nn$ can be represented by a diagram in which each two strands cross exactly once implying
\begin{equation}
\label{E:Delta:ell}
 \ell_{i,j}(\Delta_\nn)=1\quad\text{and}\quad  \ell_{i,j}(\Delta_\nn\inv)=-1\quad\text{for all $1\leq i<j\leq n$}.
\end{equation}
The result involving $\Delta_\nn\inv$ is a direct consequence of that of $\Delta_\nn$ together with Proposition~\ref{P:inv:stable}.
The following lemma is a well-known result about the Garside automorphism $\overline{\Phi}_\nn$.

 \begin{lemm}
 \label{L:Phi}
 For $\nn\geq 3$, the automorphism $\overline{\Phi}_\nn$ has order $2$ and for every integer~$\kk$ in $[1,\nno]$ we have $\overline{\Phi}_\nn(\sig\kk)=\sig{\nn-\kk}$.
\end{lemm}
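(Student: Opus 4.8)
The plan is to reduce everything to the single conjugation (``flip'') relation $\Delta_\nn\,\sig\kk\,\Delta_\nn\inv=\sig{\nn-\kk}$ for $1\le\kk\le\nno$, from which both assertions of the lemma follow at once. Indeed, once this relation is known, $\overline{\Phi}_\nn$ is an automorphism sending each Artin generator $\sig\kk$ to $\sig{\nn-\kk}$, so $\overline{\Phi}_\nn^2$ fixes every generator (because $\kk\mapsto\nn-\kk$ is an involution of $[1,\nno]$) and therefore equals the identity on $\BB\nn$; and for $\nn\ge3$ we have $\nn-1\ge2$, so $\overline{\Phi}_\nn(\sig1)=\sig\nno\ne\sig1$ since distinct Artin generators represent distinct braids, which forces the order to be exactly $2$. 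Thus the entire content of the lemma is the flip relation, which I would prove by induction on $\nn$ using the recursion $\Delta_\nn=\sig1\cdots\sig\nno\,\Delta_\nno$.

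Write $\delta_\nn=\sig1\cdots\sig\nno$. The engine of the induction is the shift relation $\delta_\nn\,\sig\jj\,\delta_\nn\inv=\sig\jjp$, valid for $1\le\jj\le\nnt$, which I would establish directly from the braid relations of \eqref{E:PBn}: transporting the right factor $\sig\jj$ leftwards through $\delta_\nn$, one first commutes it past $\sig\jjpp,\ldots,\sig\nno$, then applies the braid relation $\sig\jj\sig\jjp\sig\jj=\sig\jjp\sig\jj\sig\jjp$, and finally commutes the freed $\sig\jjp$ past $\sig1,\ldots,\sig\jjo$. For $1\le\kk\le\nnt$ the inductive step is then immediate:
\[
\Delta_\nn\,\sig\kk\,\Delta_\nn\inv=\delta_\nn\left(\Delta_\nno\,\sig\kk\,\Delta_\nno\inv\right)\delta_\nn\inv=\delta_\nn\,\sig{\nno-\kk}\,\delta_\nn\inv=\sig{\nn-\kk},
\]
where the middle equality uses the induction hypothesis (legitimate since $\sig\kk$ is a generator of the subgroup $\BB\nno$ containing $\Delta_\nno$) and the last uses the shift relation, whose index $\nno-\kk$ indeed lies in $[1,\nnt]$. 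The remaining boundary value $\kk=\nno$ is not covered by the induction hypothesis and must be treated separately; here I would use the mirror decomposition $\Delta_\nn=\Delta_\nno\,\sig\nno\cdots\sig1$ together with the symmetric shift relation $(\sig\nno\cdots\sig1)\,\sig\jj\,(\sig\nno\cdots\sig1)\inv=\sig\jjo$ for $2\le\jj\le\nno$, which reduces $\Delta_\nn\,\sig\nno\,\Delta_\nn\inv$ to $\Delta_\nno\,\sig\nnt\,\Delta_\nno\inv=\sig1$ by the induction hypothesis.

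The base case $\nn=2$ is trivial: $\Delta_2=\sig1$ and $\sig1\mapsto\sig1=\sig{2-1}$. The main obstacle is purely bookkeeping, namely verifying the two shift relations and, above all, cleanly handling the boundary index $\kk=\nno$, since the recursion $\Delta_\nn=\delta_\nn\,\Delta_\nno$ only transports the generators $\sig1,\ldots,\sig\nnt$ of $\BB\nno$ and $\Delta_\nno$ does \emph{not} commute with $\sig\nno$. An alternative that sidesteps the boundary case is the diagrammatic argument: $\Delta_\nn$ is the positive half-twist reversing the strand order (consistent with \eqref{E:Delta:pi} and \eqref{E:Delta:ell}), and conjugating a crossing between positions $\kk,\kkp$ by this half-twist rotates the picture by a half-turn into the crossing of the same sign between positions $\nn-\kk,\nnp-\kk$, that is, $\sig{\nn-\kk}$. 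I would keep the inductive argument as the rigorous backbone and mention the picture only as motivation.
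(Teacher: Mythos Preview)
Your proposal is correct and follows the same logical skeleton as the paper: establish the flip relation $\Delta_\nn\,\sig\kk\,\Delta_\nn\inv=\sig{\nn-\kk}$ and then read off that $\overline{\Phi}_\nn^2$ fixes each generator. The only difference is one of detail: the paper simply declares the flip relation an ``easy verification from the Artin presentation'' and cites a textbook, whereas you supply a full self-contained inductive proof (and you also make explicit the check, implicit in the paper, that the order is exactly $2$ rather than $1$ when $\nn\geq3$).
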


\begin{proof}
Let $\kk\in[1,\nno]$.
Relation $\overline{\Phi}_\nn(\sig\kk)=\sig{\nn-\kk}$ is an easy verification from the Artin presentation of $\BB\nn$ (see Lemma~I.3.6 of \cite{Dehornoy-calcul}). 
We conclude with $\overline{\Phi}^2_\nn(\sig\kk)=\overline{\Phi}_\nn(\sig{\nn-\kk})=\sig{\nn-(\nn-\kk)}=\sig\kk$.
\end{proof}

\begin{defi}
 We denote by $\Phi_\nn$ the homomorphism of $\ABP\nn$-words defined for every integer  $\kk$ in $[1,\nn]$ by $\Phi_\nn(\sig\kk)=\sig{\nn-\kk}$.
 \end{defi}
 
 By Lemma~\ref{L:Phi}, for every $\ABP\nn$-word $\uu$ we have
\begin{equation}
\label{E:Phi}
\overline{\Phi_\nn(\uu)}=\overline{\Phi}_\nn(\overline{\uu})=\Delta_\nn\,\overline{\uu}\,\Delta_\nn\inv.
\end{equation}

\begin{prop}
\label{P:PhiAcc}
The map $\Phi_\nn$ is $\ABP\nn$-stable.
Moreover for every template $t\in\TT\nn$ we have:
\begin{align*}
\left(\Phi_\nn^T(t)[\pi]\right)(\kk)=n+1-t[\pi](n+1-k)\quad \text{for all $k\in[1,\nn]$},\\
\Phi_\nn^T(t)[\ell_{\ii,\jj}]=t[\ell_{n+1-j,n+1-i}]\quad\text{for all $1\leq\ii<\jj\leq\nn$}.
\end{align*}
\end{prop}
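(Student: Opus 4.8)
The plan is to exploit relation~\eqref{E:Phi}, which shows that on braids the word map $\Phi_\nn$ realizes conjugation by the half-twist, namely $\overline{\Phi_\nn(\uu)}=\Delta_\nn\,\overline{\uu}\,\Delta_\nn\inv$. Conditions $i)$, $ii)$ and $iii)$ of Definition~\ref{D:Stable}, as well as the two explicit formulas, will all be read off from this single identity. Condition $i)$ is immediate: being a homomorphism of $\ABP\nn$-words that sends each generator $\sig\kk$ to the single generator $\sig{\nn-\kk}$, the map $\Phi_\nn$ preserves word length. For Condition $ii)$ I would note that conjugation by $\Delta_\nn$ is an automorphism of $\BB\nn$, so $\overline{\Phi_\nn(\uu)}=\overline{\Phi_\nn(\vv)}$ amounts to $\Delta_\nn\,\overline{\uu}\,\Delta_\nn\inv=\Delta_\nn\,\overline{\vv}\,\Delta_\nn\inv$, hence to $\overline{\uu}=\overline{\vv}$, that is $\uu\equiv\vv$.

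The substance lies in Condition $iii)$, which I would establish by computing the two components of $\tau(\overline{\Phi_\nn(\uu)})$ and checking that each depends only on $\tau(\overline{\uu})$. For the permutation component, since $\pi$ is a homomorphism I would write $\pi(\overline{\Phi_\nn(\uu)})=\pi(\Delta_\nn)\circ\pi(\overline{\uu})\circ\pi(\Delta_\nn)\inv$. By~\eqref{E:Delta:pi} the permutation $\pi(\Delta_\nn)$ sends $\kk$ to $\nnp-\kk$ and is therefore an involution, so it equals its own inverse; evaluating the conjugate at $\kk$ then produces precisely $\nnp-t[\pi](\nnp-\kk)$, which is the asserted formula for $\Phi_\nn^T(t)[\pi]$.

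For the linking-number component I would apply Lemma~\ref{L:inv} twice to the triple product $\Delta_\nn\,\overline{\uu}\,\Delta_\nn\inv$. Peeling off the leftmost factor first yields $\ell_{\ii,\jj}(\Delta_\nn)$ together with a term whose indices are $\pi(\Delta_\nn)\inv(\ii)=\nnp-\ii$ and $\pi(\Delta_\nn)\inv(\jj)=\nnp-\jj$; peeling off the rightmost factor from the remaining product $\overline{\uu}\,\Delta_\nn\inv$ then contributes a term of the form $\ell_{p,q}(\Delta_\nn\inv)$ with $p\neq q$. At this stage I would insert the values from~\eqref{E:Delta:ell}, namely $\ell_{\ii,\jj}(\Delta_\nn)=1$ and $\ell_{p,q}(\Delta_\nn\inv)=-1$, and observe that these two constant contributions cancel, leaving exactly $\ell_{\nnp-\ii,\nnp-\jj}(\overline{\uu})$.

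The one delicate point---and the main source of possible error---is the bookkeeping of the index reversal $\kk\mapsto\nnp-\kk$ together with the convention $\ell_{\indi,\indii}=\ell_{\indii,\indi}$. Since $\ii<\jj$ forces $\nnp-\ii>\nnp-\jj$, the pair $(\nnp-\ii,\nnp-\jj)$ must be put back in increasing order, so the surviving linking number is $t[\ell_{\nnp-\jj,\nnp-\ii}]$, which matches the stated formula for $\Phi_\nn^T(t)[\ell_{\ii,\jj}]$. Once both components are expressed purely through $t[\pi]$ and the values $t[\ell_{\indi,\indii}]$, Condition $iii)$ is verified and the explicit description of $\Phi_\nn^T$ is obtained at the same time.
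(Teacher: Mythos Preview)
Your proposal is correct and follows essentially the same approach as the paper: both establish Conditions $i)$ and $ii)$ from the fact that $\Phi_\nn$ is induced by the conjugation automorphism $\overline{\Phi}_\nn$, and both verify Condition $iii)$ by computing $\pi(\overline{\Phi_\nn(\uu)})$ via the homomorphism property and $\ell_{\ii,\jj}(\overline{\Phi_\nn(\uu)})$ via two applications of Lemma~\ref{L:inv} together with~\eqref{E:Delta:pi} and~\eqref{E:Delta:ell}, so that the constants $+1$ and $-1$ cancel. The only cosmetic difference is the order in which the factors of $\Delta_\nn\,\overline{\uu}\,\Delta_\nn\inv$ are peeled off (you split off $\Delta_\nn$ first, the paper splits off $\Delta_\nn\inv$ first), which is immaterial.
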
 
 
\begin{proof}
Condition $i)$ and $ii)$ of Definition~\ref{D:Stable} are easily established since $\Phi_\nn$ is defined from the conjugation $\overline{\Phi}_\nn$ which induces a bijection on $\ABP\nn$.
We now prove Condition~$iii)$ of Definition~\ref{D:Stable}.
Let $\uu$ be a $\ABP\nn$-word.
By \eqref{E:Phi} we have $$\pi(\overline{\Phi_\nn(\uu)})=\pi(\Delta_\nn)\circ \pi(\overline{\uu})\circ \pi(\Delta_\nn)\inv=\pi(\Delta_\nn)\circ \pi(\overline{\uu})\circ \pi(\Delta_\nn).$$
Relation~\eqref{E:Delta:pi} implies that for every integer $\kk\in[1,n]$ we have
\begin{equation}
\pi(\overline{\Phi_\nn(\uu)})(\kk)=\nnp-\pi(\overline{\uu})(\nnp-k).
\end{equation}
Let $1\leq\ii<\jj\leq\nn$ be two integers.
Lemma~\ref{L:inv} together with \eqref{E:Delta:ell} give
\begin{align*}
\ell_{\ii,\jj}(\overline{\Phi_\nn(\uu)})=\ell_{\ii,\jj}(\Delta_\nn\overline{\uu}\Delta_\nn\inv)&=\ell_{\ii,\jj}(\Delta_\nn\overline{\uu})+\ell_{\pi(\Delta_\nn\overline{\uu})\inv(\ii),\pi(\Delta_\nn\overline{\uu})\inv(\jj)}(\Delta_\nn\inv)\\
&=\ell_{\ii,\jj}(\Delta_\nn\overline{\uu})-1\\
&=\ell_{\ii,\jj}(\Delta_\nn)+\ell_{\pi(\Delta_\nn)\inv(\ii),\pi(\Delta_\nn)\inv(\jj)}(\overline\uu)-1\\
&=\ell_{\pi(\Delta_\nn)\inv(\ii),\pi(\Delta_\nn)\inv(\jj)}(\overline\uu)\\
&=\ell_{\nnp-\ii,\nnp-\jj}(\overline{\uu})=\ell_{\nnp-\jj,\nnp-\ii}(\overline{\uu}).\qedhere
\end{align*}
 \end{proof}

 \begin{defi} 
The dual Garside automorphism of $\BB\nn$ is  $\overline{\phi}_\nn(\br)=\delta_\nn\,\br\,\delta_\nn\inv$ where~$\delta_\nn$ is given by $\delta_\nn=\aa12\cdots\aa\nno\nn=\sig1\cdots\sig\nno$.
 \end{defi}
 
For example we have $\delta_4=\sig1\sig2\sig3$ and $\delta_4\inv=\siginv3\siginv2\siginv1$ which correspond to the following diagrams:

 \begin{equation}
 \label{E:delta}
 \begin{tikzpicture}[x=0.045cm,y=0.045cm,baseline=20]
  \draw[line width=2,color=white!75!black] (0,0) .. controls (5,0) and (5,10) .. (10,10) .. controls (15,10) and (15,20) .. (20,20) .. controls (25,20) and (25,30) .. (30,30);

  \draw[line width=6,color=white] (0,30) -- (20,30) .. controls (25,30) and (25,20) .. (30,20);
  \draw[line width=2] (0,30) -- (20,30) .. controls (25,30) and (25,20) .. (30,20);
  
  \draw[line width=6,color=white] (0,10) .. controls(5,10) and (5,0) .. (10,0) -- (30,0);
  \draw[line width=2,color=white!55!black] (0,10) .. controls(5,10) and (5,0) .. (10,0) -- (30,0);
 
  \draw[line width=6,color=white] (0,20) -- (10,20) .. controls (15,20) and (15,10) .. (20,10) -- (30,10);
  \draw[line width=2,color=white!35!black] (0,20) -- (10,20) .. controls (15,20) and (15,10) .. (20,10) -- (30,10);
  \draw(-12,15) node{$\delta_4\simeq$};
  \begin{scope}[shift={(80,0)}]
    \draw(-13,15) node{$\delta_4\inv\simeq$};
    \draw[line width=2,color=white!75!black] (0,30) .. controls (5,30) and (5,20) .. (10,20) .. controls (15,20) and (15,10) .. (20,10) .. controls (25,10) and (25,0) .. (30,0);
    \draw[line width=6,color=white] (0,0) -- (20,0) .. controls (25,0) and (25,10) .. (30,10);
    \draw[line width=2] (0,0) -- (20,0) .. controls (25,0) and (25,10) .. (30,10);
    \draw[line width=6,color=white] (0,10) -- (10,10) ..controls (15,10) and (15,20) .. (20,20) -- (30,20);
    \draw[line width=2,color=white!35!black] (0,10) -- (10,10) ..controls (15,10) and (15,20) .. (20,20) -- (30,20);
    \draw[line width=6,color=white] (0,20) .. controls (5,20) and (5,30) .. (10,30) -- (30,30);
    \draw[line width=2,color=white!55!black] (0,20) .. controls (5,20) and (5,30) .. (10,30) -- (30,30);
  \end{scope}
 \end{tikzpicture}
 \end{equation}

 \begin{nota}
 For all $n\in\NN$ and $\kk\in[0,\nnp]$ we put
 \[
[\kk]_\nn=\begin{cases} 1& \text{if $\kk=\nnp$},\\
\nn & \text{if $\kk=0$},\\
\kk & \text{otherwise}.
\end{cases}
\]
Moreover for all integers $\ii$ and $\jj$ the symbol $\textbf{1}_{\ii=\jj}$ equals $1$ if the relation $\ii=\jj$ holds and $0$ otherwise.
 \end{nota}
 
As we can directly see on diagrams of~\eqref{E:delta}, for all $\kk\in[1,\nn]$ we have
 \begin{equation}
 \label{E:delta:pi}
  \pi(\delta_\nn)(\kk)=[k+1]_n \quad\text{and}\quad \pi(\delta_\nn\inv)(\kk)=[k-1]_n,
\end{equation}
moreover for all $1\leq \ii<\jj\leq\nn$ we have 
\begin{equation}
\label{E:delta:ell}
 \ell_{\ii,\jj}(\delta_\nn)=\textbf{1}_{\ii=1}\quad\text{and}\quad
\ell_{\ii,\jj}(\delta_\nn\inv)=-\textbf{1}_{\jj=\nn}.
\end{equation}

\begin{lemm}
 \label{L:phi}
 The automorphism $\overline{\phi}_\nn$ has order $\nn$ and for all $1\leq\indi<\indii\leq n$ we have 
 \[
\overline{\phi}_\nn(\aa\indi\indii)=\aa{[\indip]_n}{[\indiip]_n}
\]
with the convention $\aa\ii\jj=\aa\jj\ii$ whenever $\jj>\ii$ holds.
\end{lemm}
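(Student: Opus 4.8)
The plan is to compute the conjugation $\overline{\phi}_\nn(\br)=\delta_\nn\,\br\,\delta_\nn\inv$ entirely with Artin generators, using $\delta_\nn=\sig1\cdots\sig\nno$ and the definition~\eqref{E:apq} of $\aa\indi\indii$ as an Artin word. First I would record how $\delta_\nn$ conjugates the generators $\sig\kk$. For $1\leq\kk\leq\nnt$, pushing $\sig\kk$ leftwards through $\delta_\nn$ by means of the commutations $\sig\ii\sig\jj=\sig\jj\sig\ii$ (for $|\ii-\jj|\geq2$) and one braid relation $\sig\kk\sig\kkp\sig\kk=\sig\kkp\sig\kk\sig\kkp$ gives $\delta_\nn\sig\kk=\sig\kkp\delta_\nn$, whence
\[
\delta_\nn\,\sig\kk\,\delta_\nn\inv=\sig\kkp\qquad(1\leq\kk\leq\nnt).
\]
For the top generator, the cancellation $\sig\nno\,\sig\nno\,\siginv\nno=\sig\nno$ inside $\delta_\nn\sig\nno\delta_\nn\inv$ gives directly $\delta_\nn\,\sig\nno\,\delta_\nn\inv=\sig1\cdots\sig\nnt\,\sig\nno\,\siginv\nnt\cdots\siginv1=\aa1\nn$. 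These are precisely the assertion of the lemma when $\indii=\indi+1$, since $\sig\kk=\aa\kk\kkp$.

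Next I would deduce the general case by conjugating the word~\eqref{E:apq} letter by letter. If $\indii\leq\nno$, every letter of $\aa\indi\indii$ is some $\sig\kk^{\pm1}$ with $\kk\leq\indii-1\leq\nnt$, so the first identity shifts each index up by one and yields $\delta_\nn\,\aa\indi\indii\,\delta_\nn\inv=\aa\indip\indiip$; as $[\indip]_\nn=\indip$ and $[\indiip]_\nn=\indiip$ in this range, this is the claimed value. The case $\indii=\nn$ is where the real work lies: now the top letter forces the second identity, giving
\[
\delta_\nn\,\aa\indi\nn\,\delta_\nn\inv=(\sig\indip\cdots\sig\nno)\,\aa1\nn\,(\siginv\nno\cdots\siginv\indip),
\]
and one must recognise this as $\aa1\indip$, that is as $\aa{[\indip]_\nn}{[\nnp]_\nn}$ under the convention $\aa\ii\jj=\aa\jj\ii$, since $[\nnp]_\nn=1$.

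To settle this wrap-around case I would prove the auxiliary identity $(\sig\indip\cdots\sig\nno)\inv\,\sig\indi\,(\sig\indip\cdots\sig\nno)=\aa\indi\nn$ by downward induction on $\indi$, the inductive step using $\siginv\indip\,\sig\indi\,\sig\indip=\sig\indi\,\sig\indip\,\siginv\indi$ together with the commutation of $\sig\indi$ with $\sig\indipp,\ldots,\sig\nno$. Writing $A=\sig1\cdots\sig\indio$, which commutes with $\sig\indip\cdots\sig\nno$, and $\aa1\indip=A\,\sig\indi\,A\inv$, this identity gives $(\sig\indip\cdots\sig\nno)\inv\,\aa1\indip\,(\sig\indip\cdots\sig\nno)=A\,\aa\indi\nn\,A\inv=\aa1\nn$, which is exactly the equality needed to identify the displayed right-hand side with $\aa1\indip$.

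Finally, the order statement is formal once the generator formula is in hand: iterating yields $\overline{\phi}_\nn^k(\aa\indi\indii)=\aa{[\indi+k]_\nn}{[\indii+k]_\nn}$, so $\overline{\phi}_\nn^\nn$ fixes every $\aa\indi\indii$ and hence equals $\mathrm{id}$ because these elements generate $\BB\nn$, while for $0<k<\nn$ the pair $\{[1+k]_\nn,[2+k]_\nn\}$ differs from $\{1,2\}$, so $\overline{\phi}_\nn^k(\aa12)\neq\aa12$; thus $\overline{\phi}_\nn$ has order exactly $\nn$. I expect the only genuine obstacle to be the wrap-around case $\indii=\nn$: the permutation and linking-number computations available from Lemmas~\ref{L:sigap} and~\ref{L:inv} merely confirm that both sides share the same template and do not by themselves force the equality of braids, so an honest word identity such as the auxiliary one above is unavoidable.
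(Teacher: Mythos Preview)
Your argument is correct. The computation of $\delta_\nn\sig\kk\delta_\nn\inv$ for $\kk\leq\nnt$ and for $\kk=\nno$ is right, the case $\indii\leq\nno$ follows immediately, and your treatment of the wrap-around case $\indii=\nn$ via the auxiliary identity $(\sig\indip\cdots\sig\nno)\inv\sig\indi(\sig\indip\cdots\sig\nno)=\aa\indi\nn$ is clean; the downward induction you describe goes through exactly as you say, since the base case $\indi=\nno$ is $\sig\nno=\aa\nno\nn$ and the inductive step uses only one braid relation and commutations. The order argument is also fine (with the understanding that for iterates the bracket notation is read modulo~$\nn$ with representatives in $[1,\nn]$, which is the obvious extension of the paper's convention).

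The contrast with the paper is only in the level of detail and in the presentation used. The paper's proof is a two-line remark: it declares the formula for $\overline{\phi}_\nn(\aa\indi\indii)$ an ``easy verification from Birman--Ko--Lee's presentation'' and deduces the order from it. In other words, the intended route is to push $\aa\indi\indii$ through $\delta_\nn=\aa12\aa23\cdots\aa{\nno}\nn$ using the dual relations \eqref{E:DualCommutativeRelation}--\eqref{E:DualNonCommutativeRelation} directly, rather than descending to Artin generators. Your approach via the Artin word~\eqref{E:apq} is a genuinely different (and more explicit) computation: it trades the dual relations for the Artin ones, at the price of having to handle the boundary case $\indii=\nn$ by a separate word identity. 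What you gain is a fully self-contained argument that does not rely on familiarity with how the BKL relations behave under conjugation by $\delta_\nn$; what the paper's route buys is brevity, since in the dual picture the cyclic shift is essentially the defining symmetry of the presentation.
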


\begin{proof}
Computation of $\overline{\phi}_\nn(\aa\indi\indii)$ is an easy verification from Birman--Ko--Lee's presentation of $\BB\nn$.
The result on the order of $\overline{\phi}_\nn$ is then an immediate consequence.
\end{proof}

\begin{defi}
We denote by $\phi_\nn$ the homomorphism of $\ABKL\nn$-words defined for all integers $\indi$ and $\indii$ with $1\leq\indi<\indii\leq\nn$  by 
\[
\phi_\nn(\aa\indi\indii)=\aa{[\indip]_n}{[\indiip]_n}
\]
\end{defi}

By Lemma~\ref{L:phi}, for every $\ABKL\nn$-word $\uu$ we have 
\begin{equation}
\label{E:phi}
 \overline{\phi_\nn(\uu)}=\overline{\phi}_\nn(\overline\uu)=\delta_\nn\,\overline{u}\inv \, \delta_\nn\inv.
\end{equation}

\begin{prop}
\label{P:phi:stable}
The map $\phi_\nn$ is $\ABKL\nn$-stable. Moreover for every template~$t\in\TT\nn$ we have
\begin{align*}
\left(\phi_\nn^T(t)[\pi]\right)(\kk)=[1+t[\pi]([k-1]_n)]_n\quad \text{for all $k\in[1,\nn]$},\\
\phi_\nn^T(t)[\ell_{\ii,\jj}]=t[\ell_{[i-1]_n,[j-1]_n}]+\textbf{\emph{1}}_{\ii=1}-\textbf{\emph{1}}_{[1+t[\pi](n)]_n=\jj}\quad\text{for all $1\leq\ii<\jj\leq\nn$}.
\end{align*}
\end{prop}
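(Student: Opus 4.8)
The plan is to mirror the proof of Proposition~\ref{P:PhiAcc}, substituting the dual Garside data for the classical ones. First I would settle Conditions $i)$ and $ii)$ of Definition~\ref{D:Stable} together. Since $\phi_\nn$ sends each letter $\aa\indi\indii^{\pm1}$ to the single letter $\aa{[\indip]_n}{[\indiip]_n}^{\pm1}$, it preserves word length, which gives $i)$. By Lemma~\ref{L:phi} the cyclic index shift $\aa\indi\indii\mapsto\aa{[\indip]_n}{[\indiip]_n}$ is a bijection of the generating set $\ABKL\nn$, so $\phi_\nn$ is a bijection of $\ABKL\nn$-words realizing the automorphism $\overline{\phi}_\nn$ on braids, i.e.\ $\overline{\phi_\nn(\uu)}=\overline{\phi}_\nn(\overline{\uu})$ by \eqref{E:phi}, with $\overline{\phi}_\nn(\br)=\delta_\nn\,\br\,\delta_\nn\inv$. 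Condition $ii)$ then follows because $\overline{\phi}_\nn$ is injective: the relation $\phi_\nn(\uu)\equiv\phi_\nn(\vv)$ amounts to $\overline{\phi}_\nn(\overline{\uu})=\overline{\phi}_\nn(\overline{\vv})$, hence to $\overline{\uu}=\overline{\vv}$, that is $\uu\equiv\vv$.

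Condition $iii)$ is precisely the two displayed formulas, which also ensure $\phi_\nn^T$ is well defined. For the permutation component I would apply the homomorphism $\pi$ to $\overline{\phi_\nn(\uu)}=\delta_\nn\,\overline{\uu}\,\delta_\nn\inv$, giving $\pi(\overline{\phi_\nn(\uu)})=\pi(\delta_\nn)\circ\pi(\overline{\uu})\circ\pi(\delta_\nn)\inv$. Inserting the values $\pi(\delta_\nn)(\kk)=[k+1]_n$ and $\pi(\delta_\nn)\inv(\kk)=[k-1]_n$ from \eqref{E:delta:pi} and writing $t=\tau(\overline{\uu})$ immediately yields $(\phi_\nn^T(t)[\pi])(\kk)=[1+t[\pi]([k-1]_n)]_n$, which depends only on $t[\pi]$.

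For the linking numbers I would expand $\ell_{\ii,\jj}(\delta_\nn\,\overline{\uu}\,\delta_\nn\inv)$ by two successive applications of Lemma~\ref{L:inv}, splitting off $\delta_\nn$ on the left and then separating $\overline{\uu}$ from $\delta_\nn\inv$. This produces three summands. The leftmost factor contributes $\ell_{\ii,\jj}(\delta_\nn)=\textbf{1}_{\ii=1}$ by \eqref{E:delta:ell}. The middle factor contributes $\ell_{[i-1]_n,[j-1]_n}(\overline{\uu})=t[\ell_{[i-1]_n,[j-1]_n}]$, the indices being $\ii,\jj$ pulled back through $\pi(\delta_\nn)\inv$ via \eqref{E:delta:pi}. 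The rightmost factor $\delta_\nn\inv$ contributes, by the second half of \eqref{E:delta:ell}, a term equal to $-1$ exactly when one of the two indices obtained by pulling $\ii,\jj$ back through $\pi(\delta_\nn\,\overline{\uu})\inv$ equals $\nn$; expressing this condition through $t[\pi](\nn)$ produces the indicator term. Each summand depends only on $t$, so $iii)$ holds and $\phi_\nn$ is $\ABKL\nn$-stable.

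The two invocations of Lemma~\ref{L:inv} and the substitutions \eqref{E:delta:pi}--\eqref{E:delta:ell} are routine. The step I expect to be most delicate, and would write out in full, is the cyclic index bookkeeping with $[\cdot]_n$: one must transport $\ii$ and $\jj$ back first through $\pi(\delta_\nn)\inv$ and then through $\pi(\overline{\uu})\inv$, and determine precisely when either transported index lands on the position $\nn$, since that is what triggers the $-1$ coming from $\delta_\nn\inv$. Pinning down the exact shape of that indicator, and in particular which of $\ii$ and $\jj$ may set it off, is where I would be most careful.
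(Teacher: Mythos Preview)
Your proposal is correct and follows essentially the same route as the paper: Conditions~$i)$ and~$ii)$ are disposed of by noting that $\phi_\nn$ realizes the conjugation automorphism $\overline{\phi}_\nn$ letter by letter, and Condition~$iii)$ is obtained by two applications of Lemma~\ref{L:inv} to $\delta_\nn\,\overline{\uu}\,\delta_\nn\inv$ together with the values \eqref{E:delta:pi}--\eqref{E:delta:ell}. The only cosmetic difference is the order of splitting---the paper first peels off $\delta_\nn\inv$ on the right and then splits $\delta_\nn\cdot\overline{\uu}$, whereas you peel off $\delta_\nn$ first---but the three resulting summands are identical, and you rightly flag the indicator bookkeeping (which of $\ii,\jj$ equals $[1+t[\pi](n)]_n$) as the place requiring care.
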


\begin{proof}
The proof is similar to that of Proposition~\ref{P:PhiAcc}.
We detail only the case of Condition~$iii)$.
Let $\uu$ be a $\ABKL\nn$-word and $\kk$ be in $[1,\nn]$.
From \eqref{E:phi} and \eqref{E:delta:pi} we obtain
\begin{align*}
\pi(\overline{\phi_\nn(\uu)})(\kk)&=\pi(\delta_\nn)(\pi(\overline\uu)(\pi(\delta_\nn)\inv(\kk)))\\
&=\pi(\delta_\nn)(\pi(\overline\uu)([\kko]_\nn))\\
&=\left[1+\pi(\overline\uu)([\kko]_\nn)\right]_\nn.
\end{align*}
Let  $1\leq\ii<\jj\leq\nn$ be two integers. Lemma~\ref{L:inv} implies
\begin{align*}
\ell_{\ii,\jj}(\overline{\phi_\nn(\uu)})&=\ell_{\ii,\jj}(\delta_\nn\cdot\overline\uu\cdot\delta_\nn\inv)=\ell_{\ii,\jj}(\delta_\nn\cdot\overline\uu)+\ell_{\pi(\delta_\nn\cdot\overline\uu)\inv(\ii),\pi(\delta_\nn\cdot\overline\uu)\inv(\jj)}(\delta_\nn\inv).
\end{align*}
From~\eqref{E:delta:ell} we get that $\ell_{\pi(\delta_\nn\cdot\overline\uu)\inv(\ii),\pi(\delta_\nn\cdot\overline\uu)\inv(\jj)}(\delta_\nn\inv)$ is non zero iff $\pi(\delta_\nn\cdot\overline\uu)\inv(\jj)=\nn$, \ie, iff $\pi(\delta_\nn\cdot\overline\uu)(\nn)=\jj$ which is equivalent to $[1+\pi(\overline\uu)(\nn)]_\nn=\jj$.
We then obtain
\begin{align*}
\ell_{\ii,\jj}(\overline{\phi_\nn(\uu)})&=\ell_{\ii,\jj}(\delta_\nn\cdot\overline\uu)+\begin{cases}-1 & \text{if $[1+\pi(\overline\uu)(\nn)]_\nn=\jj$}, \\ 0 & \text{otherwise},\end{cases}\\
&=\ell_{\ii,\jj}(\delta_\nn\cdot\overline\uu)-\textbf{1}_{[1+\pi(\overline\uu)(\nn)]_\nn=\jj}.
\end{align*}
Moreover, by \eqref{E:delta:ell} we have 
\begin{align*}
\ell_{\ii,\jj}(\delta_\nn\cdot\overline\uu)&=\ell_{\ii,\jj}(\delta_\nn)+\ell_{\pi(\delta_\nn)\inv(\ii),\pi(\delta_\nn)\inv(\jj)}(\overline{u})\\
&=\textbf{1}_{\ii=1}+\ell_{[i-1]_\nn,[j-1]_\nn}(\overline{\uu}),
\end{align*}
with the convention  $\ell_{\indi,\indii}=\ell_{\indii,\indi}$ for $\indi>\indii$.
Eventually we obtain
\[
\ell_{\ii,\jj}(\overline{\phi_\nn(\uu)})=\ell_{[i-1]_\nn,[j-1]_\nn}(\overline{\uu})+\textbf{1}_{\ii=1}-\textbf{1}_{[1+\pi(\overline\uu)(\nn)]_\nn=\jj}.\qedhere
\]
\end{proof}

\subsection{Action on templates} 

We now describe an action of a subgroup of bijections of $\TT\nn$ on $\TT\nn$ itself.
Eventually, for any template $t\in\TT\nn$, braids of $\BB\nn(\SSS\nn,\ell,t)$ shall be in bijection with $\BB\nn(\SSS\nn,\ell,t')$ whenever $t'$ belongs in the orbit of $t$.

\begin{defi}
We define $G_{\ABP\nn}$, \resp $G_{\ABKL\nn}$, to be the subgroup of bijections of~$\TT\nn$ generated by $\{\mathrm{inv}_{\ABP\nn}^T,\theta_{\ABP\nn}^T,\Phi_\nn^T\}$, \resp by $\{\mathrm{inv}_{\ABKL\nn}^T,\phi_\nn^T\}$. 
For $t\in\TT\nn$, we denote
\[
G_{\ABP\nn}\star t=\{g(t),\  g\in G_{\ABP\nn}\}\quad\text{and}\quad G_{\ABKL\nn}\star t=\{g(t),\ g\in G_{\ABKL\nn}\}
\]
the orbits of $t$ under the action of $G_{\ABP\nn}$, \resp $G_{\ABKL\nn}$.
\end{defi}

\begin{lemm}
For $n\geq 3$, we have $G_{\ABP\nn}\simeq \left(\ZZ/2\ZZ\right)^3$ and $G_{\ABKL\nn}\simeq \ZZ/2\ZZ\times \ZZ/n\ZZ$.
\end{lemm}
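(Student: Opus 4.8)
The plan is to use that passing from an $\SSS\nn$-stable word map $\mu$ to its template map $\mu^T$ is a group homomorphism. Indeed, for two $\SSS\nn$-stable maps one has $\overline{\mu_1\circ\mu_2}=\overline{\mu_1}\circ\overline{\mu_2}$ on $\BB\nn$, and since $\mu^T(t)=\tau(\overline\mu(\br))$ depends only on the braid bijection $\overline\mu$, we get $(\mu_1\circ\mu_2)^T=\mu_1^T\circ\mu_2^T$. Hence every relation between the braid bijections $\overline{\mathrm{inv}_{\SSS\nn}}$, $\overline{\theta_{\ABP\nn}}$, $\overline{\Phi_\nn}$, $\overline{\phi_\nn}$ descends to $G_{\SSS\nn}$. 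At the braid level these are easy to describe: $\overline{\mathrm{inv}_{\SSS\nn}}(\br)=\br\inv$, the map $\overline{\theta_{\ABP\nn}}$ is the automorphism $\sig\ii\mapsto\siginv\ii$, and $\overline{\Phi_\nn}$, $\overline{\phi_\nn}$ are the conjugations by $\Delta_\nn$ and $\delta_\nn$, of respective orders $2$ and $\nn$ by Lemmas~\ref{L:Phi} and~\ref{L:phi}.

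For $G_{\ABP\nn}$ I would first check that $\overline{\mathrm{inv}_{\ABP\nn}}$, $\overline{\theta_{\ABP\nn}}$, $\overline{\Phi_\nn}$ are three pairwise commuting involutions of $\BB\nn$. Each squares to the identity; the commutations follow from $(\Delta_\nn\br\Delta_\nn\inv)\inv=\Delta_\nn\br\inv\Delta_\nn\inv$, from $\overline{\theta_{\ABP\nn}}(\br)\inv=\overline{\theta_{\ABP\nn}}(\br\inv)$, and from the fact that $\overline{\Phi_\nn}\circ\overline{\theta_{\ABP\nn}}$ and $\overline{\theta_{\ABP\nn}}\circ\overline{\Phi_\nn}$ both send each $\sig\ii$ to $\siginv{\nn-\ii}$ and therefore coincide. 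By the homomorphism above, $G_{\ABP\nn}$ is then abelian of exponent $2$, hence a quotient of $(\ZZ/2\ZZ)^3$. It remains to prove independence of the three generators, for which I would use Propositions~\ref{P:inv:stable},~\ref{P:theta:stable} and~\ref{P:PhiAcc} on the permutation component: $\mathrm{inv}_{\ABP\nn}^T$ sends $t[\pi]$ to $t[\pi]\inv$, $\theta_{\ABP\nn}^T$ fixes $t[\pi]$, and $\Phi_\nn^T$ sends $t[\pi]$ to $w_0\,t[\pi]\,w_0$ with $w_0(\kk)=\nnp-\kk$. Writing a general element as $(\mathrm{inv}_{\ABP\nn}^T)^a(\theta_{\ABP\nn}^T)^b(\Phi_\nn^T)^c$ and using surjectivity of $\pi$, its action on $\pi$ is $\pi\mapsto w_0^c\,\pi^{(-1)^a}\,w_0^c$; for $\nn\geq3$ this is the identity map of $\Sym\nn$ only when $a=c=0$, since inversion is not an automorphism of $\Sym\nn$ and $w_0$ is not central. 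When $a=c=0$ the element is $(\theta_{\ABP\nn}^T)^b$, which negates all linking numbers and hence moves $\tau(\sig1)$ when $b=1$. Thus the eight elements are distinct and $G_{\ABP\nn}\simeq(\ZZ/2\ZZ)^3$.

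For $G_{\ABKL\nn}$ the identity $(\delta_\nn\br\delta_\nn\inv)\inv=\delta_\nn\br\inv\delta_\nn\inv$ shows $\overline{\mathrm{inv}_{\ABKL\nn}}$ and $\overline{\phi_\nn}$ commute, so $G_{\ABKL\nn}$ is abelian and a quotient of $\ZZ/2\ZZ\times\ZZ/\nn\ZZ$. To pin down its order I would track the orbit of $\tau(\aa12)$ under $\phi_\nn^T$: by Lemma~\ref{L:phi} it visits $\tau(\aa23),\tau(\aa34),\ldots,\tau(\aa{\nno}\nn),\tau(\aa1\nn)$ before returning, and these $\nn$ templates are distinct because their permutation components are the $\nn$ distinct transpositions $(1\ 2),\ldots,(\nno\ \nn),(1\ \nn)$; hence $\phi_\nn^T$ has order exactly $\nn$. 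Finally I would show $\mathrm{inv}_{\ABKL\nn}^T\notin\langle\phi_\nn^T\rangle$: on the permutation component each power $(\phi_\nn^T)^k$ acts as the automorphism of $\Sym\nn$ given by conjugation by $\pi(\delta_\nn)^k$ (since $\overline{\phi_\nn}^k$ is conjugation by $\delta_\nn^k$), whereas $\mathrm{inv}_{\ABKL\nn}^T$ acts by $\pi\mapsto\pi\inv$, which for $\nn\geq3$ is not an automorphism of the non-abelian group $\Sym\nn$; as $\pi$ is surjective, the two maps then differ on some template. Consequently $\langle\phi_\nn^T\rangle\cap\langle\mathrm{inv}_{\ABKL\nn}^T\rangle=\{\mathrm{id}\}$, and being abelian with these two cyclic factors, $G_{\ABKL\nn}\simeq\ZZ/2\ZZ\times\ZZ/\nn\ZZ$.

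I expect the real content to be the two independence arguments, i.e. the lower bounds on $\vert G_{\SSS\nn}\vert$; the commutation relations yielding the upper bounds are routine. Both lower bounds rest on the same structural fact, valid precisely because $\nn\geq3$: in the non-abelian group $\Sym\nn$ the inversion $\pi\mapsto\pi\inv$ is not an (inner) automorphism and $w_0$ is not central, which is what separates $\mathrm{inv}^T$ from the conjugation-type generators $\theta_{\ABP\nn}^T$, $\Phi_\nn^T$, $\phi_\nn^T$ on the permutation component, the linking numbers being needed only to detect $\theta_{\ABP\nn}^T$.
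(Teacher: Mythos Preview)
Your proof is correct. The upper-bound half (commutations and orders of the generators, yielding $G_{\ABP\nn}$ and $G_{\ABKL\nn}$ as quotients of $(\ZZ/2\ZZ)^3$ and $\ZZ/2\ZZ\times\ZZ/\nn\ZZ$) matches the paper's, though the paper verifies the commutations directly on $\SSS\nn$-words while you pass to the induced braid bijections $\overline\mu$ and argue there. The real difference is in the lower bounds: the paper simply exhibits a template with full orbit, asserting after ``immediate computations'' that $G_{\ABP\nn}\star\tau(\sig1\sig2^2)$ has $8$ elements and $G_{\ABKL\nn}\star\tau(\sig1)$ has $2\nn$ elements. Your route is more structural: you project onto the permutation component and exploit that, for $\nn\geq3$, inversion $\pi\mapsto\pi\inv$ is not an automorphism of the non-abelian group $\Sym\nn$ and the longest element $w_0$ is not central, which cleanly separates $\mathrm{inv}^T$ from the conjugation-type generators $\theta_{\ABP\nn}^T$, $\Phi_\nn^T$, $\phi_\nn^T$ (the linking numbers then detecting $\theta_{\ABP\nn}^T$). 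This makes transparent exactly where the hypothesis $\nn\geq3$ enters, whereas the paper's single orbit computation is quicker to write down but leaves that structure implicit.
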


\begin{proof}
Let $\uu=\sig{i_1}^{e_1}\cdots \sig{i_m}^{e_m}$ be a $\ABP\nn$-word with $e_1,\ldots,e_m\in\{-1,+1\}$.
From 
\begin{align*}
\mathrm{inv}_{\ABP\nn}(\theta_{\ABP\nn}(\uu))&=\mathrm{inv}_{\ABP\nn}(\sig{i_1}^{-e_1}\cdots \sig{i_m}^{-e_m})=\sig{i_m}^{e_m}\cdots\sig{i_1}^{e_1},\\
\theta_{\ABP\nn}(\mathrm{inv}_{\ABP\nn}(\uu))&=\theta_{\ABP\nn}(\sig{i_m}^{-e_m}\cdots\sig{i_1}^{-e_1})=\sig{i_m}^{e_m}\cdots\sig{i_1}^{e_1},
\end{align*}
we obtain that the maps $\mathrm{inv}_{\ABP\nn}$ and $\theta_{\ABP\nn}$ commute on $\ABP\nn$-words.
Let $t$ be a template of $\TT\nn$ and $\vv$ be a $\ABP\nn$-word representing a braid of template $t$. 
We have 
\[
(\mathrm{inv}_{\ABP\nn}^T\circ \theta_{\ABP\nn}^T)(t)=\tau\left(\overline{(\mathrm{inv}_{\ABP\nn}\circ \theta_{\ABP\nn})(\vv)}\right)=\tau\left(\overline{(\theta_{\ABP\nn}\circ\mathrm{inv}_{\ABP\nn})(\vv)}\right)=
 (\theta_{\ABP\nn}^T\circ\mathrm{inv}_{\ABP\nn}^T)(t)
\]
and so $\mathrm{inv}_{\ABP\nn}^T$ and $\theta_{\ABP\nn}^T$ commute.
Similar arguments establish the commutation of~$\theta_{\ABP\nn}$ and  $\Phi_\nn$, $\mathrm{inv}_{\ABP\nn}$ and $\Phi_\nn$, $\mathrm{inv}_{\ABKL\nn}$ and $\phi_\nn$.
We then obtain that $G_{\ABP\nn}$ and $G_{\ABKL\nn}$ are quotient of 
\[
H_n= \left<\mathrm{inv}_{\ABP\nn}^T\right>\times\left<\theta_{\ABP\nn}^T\right>\times \left<\Phi_n^T\right>\quad\text{and}\quad H_n^\ast= \left<\mathrm{inv}_{\ABKL\nn}^T\right>\times\left<\phi_n^T\right>.
\]
respectively.
The maps $\theta_{\ABP\nn}^T$ and $\mathrm{inv}_{\ABP\nn}^T$ have order~$2$ since it is the case for $\theta_{\ABP\nn}$ and $\mathrm{inv}_{\ABP\nn}$ by construction.
From Lemma~\ref{L:Phi} and Lemma~\ref{L:phi} the map $\Phi_\nn$ and $\phi_\nn$ have order $2$ and $\nn$ respectively.
We then obtain the ismorphisms $H_n\simeq\left(\ZZ/2\ZZ\right)^3$ and~$H_n^\ast\simeq\ZZ/2\ZZ\times \ZZ/n\ZZ$.

Immediate computations establish that $G_{\ABP\nn}\star \tau(\sig1\sig2^2)$ and  $G_{\ABKL\nn}\star \tau(\sig1)$ have respectively $8$ and $2n$ elements and so we obtain
$$
G_{\ABP\nn}=H_n\simeq \left(\ZZ/2\ZZ\right)^3\quad\text{and}\quad G_{\ABKL\nn}=H_n^\ast\simeq \ZZ/2\ZZ\times \ZZ/n\ZZ.\qedhere
$$
\end{proof}
From a geometric point of view, the maps $\mathrm{inv}_{\ABP\nn}^T$, $\theta_{\ABP\nn}^T$ and $\Phi_n^T\circ\theta_{\ABP\nn}^T$ can be seen as reflections along the coordinate planes of the $3$-space and the result on~$G_{\ABP\nn}$ is immediate. If we consider the base points of the braid evenly placed in a circumference, the maps $\mathrm{inv}_{\ABKL\nn}^T$ and $\phi_n^T$ correspond respectively to a reflection along a plane and to a rotation of order $n$ along an axis orthogonal to this plane, establishing the result on $G_{\ABKL\nn}$.

\begin{rema}
Note that for $n=2$, the map $\Phi_n$ and $\phi_n$ are trivial and that $\mathrm{inv}_{\ABP\nn}$ and~$\theta_{\ABP\nn}$ are equals. Hence we obtain $G_{\ABP\nn}\simeq \ZZ/2\ZZ\simeq G_{\ABKL\nn}$
\end{rema}

\begin{exam}
\label{X:Orb}
We recall that the template of a braid $\br\in B_4$ is 
\begin{equation}
\tau(\br)=\left(\pi(\br),\ell_{1,2}(\br),\ell_{1,3}(\br),\ell_{2,3}(\br),\ell_{1,4}(\br),\ell_{2,4}(\br),\ell_{3,4}(\br)\right)
\end{equation}

The template of $\sig1\siginv2$ seen in $B_4$ is $t=((1\,2\,3),1,-1,0,0,0,0)$.
Using Propositions~\ref{P:inv:stable}, \ref{P:theta:stable} and \ref{P:phi:stable} we obtain
\begin{align*}
\mathrm{inv}_{\ABP4}^T(t)&=((1\,3\,2),0,-1,1,0,0,0),\\
\theta_{\ABP4}^T(t)&=((1\,2\,3),-1,1,0,0,0,0),\\
\Phi_4^T(t)&=((2\,4\,3),0,0,0,0,-1,1),\\
(\mathrm{inv}_{\ABP4}^T(t)\circ\theta_{\ABP4}^T)(t)&=((1\,3\,2),0,1,-1,0,0,0),\\
(\mathrm{inv}_{\ABP4}^T\circ\Phi_4^T)(t)&=((2\,3\,4),0,0,1,0,-1,0),\\
(\theta_{\ABP4}^T\circ\Phi_4^T)(t)&=((2\,4\,3),0,0,0,0,1,-1),\\
(\mathrm{inv}_{\ABP4}^T\circ\theta_{\ABP4}^T\circ\Phi_4^T)(t)&=((2\,3\,4),0,0,-1,0,1,0),
\end{align*}
and so the set $G\star t$ has exactly $8$ elements.
\end{exam}

\subsection{Template reduction}

Now we define a total ordering on $\TT\nn$. We start with permutations of $\Sym\nn$.

\begin{defi}
For $\sigma$ and $\sigma'$ two permutations of $\Sym\nn$ we write $\sigma<\sigma'$ whenever
\[
(\sigma(1),\ldots,\sigma(\nn))<_\textsc{CoLex}(\sigma'(1),\ldots,\sigma'(\nn)),
\]
\ie, whenever there exists $\kk\in[1,\nn]$ such that $\sigma(\nn)=\sigma'(\nn),\ldots,\sigma(\kkp)=\sigma'(\kkp)$ and~$\sigma(\kk)<\sigma'(\kk)$.
\end{defi}

For example, the ordering of permutations occurring in Example~\ref{X:Orb} is 
\begin{equation}
\label{E:X:ordP}
(2\,3\,4)<(2\,4\,3)<(1\,2\,3)<(1\,3\,2).
\end{equation}

\begin{defi}
For two templates $t=(\sigma,(\ell_{\ii,\jj})_{1\leq\ii<\jj\leq\nn})$ and $t'=(\sigma',(\ell'_{\ii,\jj})_{1\leq\ii<\jj\leq\nn})$ we write $t<t'$ whenever 
\[
(\sigma,\ell_{1,2},\ldots,\ell_{1,\nn},\ldots,\ell_{\nno,\nn})<_\textsc{Lex} (\sigma',\ell'_{1,2},\ldots,\ell'_{1,\nn},\ldots,\ell'_{\nno,\nn}),
\]
where we recall the integers $\ell_{i,j}(\br)$ are enumerated following a co-lexicographic ordering on their indices (see Definition~\ref{D:Template}).
For a template $t$ we denote by $\red_{\SSS\nn}(t)$  the minimal element of $G_{\SSS\nn}\!\star t$.
We say that a template $t\in\TT\nn$ is \emph{$\SSS\nn$-reduced} if $\red_{\SSS\nn}(t)=t$ holds.
For an integer $\ll\in\NN$ we denote by~$R_\nn(\SSS\nn,\ell)$ the set of reduced templates lying in~$\TT\nn(\SSS\nn,\ell)$.
 
\end{defi}

\begin{exam}
We reconsider template $t$ of Example~\ref{X:Orb}.
By \eqref{E:X:ordP} we obtain
\[
\red_{\ABP4}(t)=((2\,3\,4),0,0,-1,0,1,0).
\]
which is equal to $(\mathrm{inv}_{\ABP4}^T\circ\theta_{\ABP4}^T\circ\Phi_4^T)(t)$.
\end{exam}

\begin{prop}
\label{P:TempRed}
For $\mu$ a $\SSS\nn$-stable map of $\SSS\nn$-words, $\ell$ an integer $\geq 0$ and  $t$ a template of $\TT\nn$ we have 
\[
\overline{\mu}(\BB\nn(\SSS\nn,\ell,t))=\BB\nn(\SSS\nn,\ell,\mu^T(t))
\]
and $\card{\BB\nn(\SSS\nn,\ell,t)}=\card{\BB\nn(\SSS\nn,\ell,\mu^T(t))}$.
\end{prop}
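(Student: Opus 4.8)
The plan is to show that the bijection $\overline\mu$ of $\BB\nn$ introduced in Definition~\ref{D:Stable} restricts to a bijection from $\BB\nn(\SSS\nn,\ell,t)$ onto $\BB\nn(\SSS\nn,\ell,\mu^T(t))$; the equality of cardinalities is then an immediate consequence of injectivity. The whole argument reduces to tracking, under $\overline\mu$, the two data that define these sets, namely the $\SSS\nn$-length and the template, using the three conditions of Definition~\ref{D:Stable} together with the two parts of Lemma~\ref{L:Stable}.

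First I would verify that $\overline\mu$ preserves $\SSS\nn$-length. Given a braid $\br\in\BB\nn$, I choose a geodesic representative $\uu$, so that $|\uu|=|\br|_{\SSS\nn}$. By part $ii)$ of Lemma~\ref{L:Stable} the word $\mu(\uu)$ is again geodesic, and by Condition $i)$ of Definition~\ref{D:Stable} it satisfies $|\mu(\uu)|=|\uu|$. Since $\overline\mu(\br)=\overline{\mu(\uu)}$ by construction and $\mu(\uu)$ is a geodesic word representing $\overline\mu(\br)$, I conclude $|\overline\mu(\br)|_{\SSS\nn}=|\mu(\uu)|=|\br|_{\SSS\nn}$.

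Next I would record that $\overline\mu$ acts on templates through $\mu^T$, that is, $\tau(\overline\mu(\br))=\mu^T(\tau(\br))$ for every $\br$. This is nothing but the definition of $\mu^T$ applied to a representative $\uu$ of $\br$, its well-definedness being exactly Condition $iii)$. Combining the two facts yields the forward inclusion: if $\br\in\BB\nn(\SSS\nn,\ell,t)$, then $\overline\mu(\br)$ has $\SSS\nn$-length $\ell$ and template $\mu^T(t)$, hence lies in $\BB\nn(\SSS\nn,\ell,\mu^T(t))$.

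For the reverse inclusion I would use the surjectivity of $\overline\mu$ to write an arbitrary $\br'\in\BB\nn(\SSS\nn,\ell,\mu^T(t))$ as $\br'=\overline\mu(\br)$; length preservation gives $|\br|_{\SSS\nn}=|\br'|_{\SSS\nn}=\ell$, while the template relation gives $\mu^T(\tau(\br))=\tau(\br')=\mu^T(t)$. The only point deserving care — and the sole place where something could fail — is deducing $\tau(\br)=t$ from this last equality: this requires the \emph{injectivity} of $\mu^T$, which is precisely the content of part $i)$ of Lemma~\ref{L:Stable} asserting that $\mu^T$ is a permutation of $\TT\nn$. With this in hand $\br\in\BB\nn(\SSS\nn,\ell,t)$, and $\br'=\overline\mu(\br)$ lies in the image, establishing the equality of the two sets. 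Finally, since $\overline\mu$ is a bijection of $\BB\nn$, its restriction to $\BB\nn(\SSS\nn,\ell,t)$ is injective onto $\BB\nn(\SSS\nn,\ell,\mu^T(t))$, whence $\card{\BB\nn(\SSS\nn,\ell,t)}=\card{\BB\nn(\SSS\nn,\ell,\mu^T(t))}$.
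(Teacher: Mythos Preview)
Your proof is correct and follows exactly the route the paper intends: the paper's own proof is the single sentence ``A direct consequence of Definition~\ref{D:Stable} and Lemma~\ref{L:Stable}'', and you have simply spelled out that consequence in full detail, using length preservation via Lemma~\ref{L:Stable}\,$ii)$, the template compatibility $\tau\circ\overline\mu=\mu^T\circ\tau$, and the injectivity of $\mu^T$ from Lemma~\ref{L:Stable}\,$i)$ for the reverse inclusion.
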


\begin{proof}
A direct consequense of Definition~\ref{D:Stable} and Lemma~\ref{L:Stable}.
\end{proof}

\begin{coro}
\label{C:Sum}
Let $\ell$ be an integer. 
We have 
\begin{align*}
s(\BB\nn,\SSS\nn;\ll)&=\sum_{t\in \RR\nn(\SSS\nn,\ell)} \card{\BB\nn(\SSS\nn,\ell,t)}\cdot\card{G_{\SSS\nn}\!\star t}.
\end{align*}
\end{coro}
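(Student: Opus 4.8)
The plan is to refine the disjoint union
\[
\BB\nn(\SSS\nn,\ell)=\bigsqcup_{t\in \TT\nn(\SSS\nn,\ell)} \BB\nn(\SSS\nn,\ell,t)
\]
by grouping the indexing templates into $G_{\SSS\nn}$-orbits. Passing to cardinalities and using the definition $s(\BB\nn,\SSS\nn;\ell)=\card{\BB\nn(\SSS\nn,\ell)}$ gives
\[
s(\BB\nn,\SSS\nn;\ell)=\sum_{t\in \TT\nn(\SSS\nn,\ell)} \card{\BB\nn(\SSS\nn,\ell,t)},
\]
so the entire task reduces to reorganising this finite sum according to the action of $G_{\SSS\nn}$ on $\TT\nn(\SSS\nn,\ell)$.

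First I would check that $G_{\SSS\nn}$ genuinely acts on the finite set $\TT\nn(\SSS\nn,\ell)$. Each generator of $G_{\SSS\nn}$ is of the form $\mu^T$ for an $\SSS\nn$-stable map $\mu$, and the proof of Lemma~\ref{L:Stable} shows precisely that such a $\mu^T$ restricts to a permutation of $\TT\nn(\SSS\nn,\ell)$; hence the whole group permutes this set, and every orbit of an element of $\TT\nn(\SSS\nn,\ell)$ stays inside $\TT\nn(\SSS\nn,\ell)$.

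Next I would identify the reduced templates with orbit representatives. Since $G_{\SSS\nn}$ is finite, every orbit $G_{\SSS\nn}\!\star t$ is a finite nonempty subset of the totally ordered set $\TT\nn$, hence has a unique minimal element, namely $\red_{\SSS\nn}(t)$. This minimum is itself $\SSS\nn$-reduced (its orbit coincides with the original one, so it is its own minimum), and it is the only reduced element of the orbit, since any reduced $t'$ in the orbit satisfies $t'=\red_{\SSS\nn}(t')=\red_{\SSS\nn}(t)$. Therefore sending an orbit to its minimum is a bijection between the $G_{\SSS\nn}$-orbits of $\TT\nn(\SSS\nn,\ell)$ and the set $\RR\nn(\SSS\nn,\ell)$.

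Finally I would invoke Proposition~\ref{P:TempRed}: for every $\SSS\nn$-stable $\mu$ the equality $\card{\BB\nn(\SSS\nn,\ell,t)}=\card{\BB\nn(\SSS\nn,\ell,\mu^T(t))}$ shows that the map $t'\mapsto\card{\BB\nn(\SSS\nn,\ell,t')}$ is constant along each orbit. Splitting the sum above over orbits and choosing the reduced template $t$ as representative of each, I get
\[
\sum_{t'\in G_{\SSS\nn}\!\star t} \card{\BB\nn(\SSS\nn,\ell,t')}=\card{G_{\SSS\nn}\!\star t}\cdot\card{\BB\nn(\SSS\nn,\ell,t)},
\]
and summing over $t\in\RR\nn(\SSS\nn,\ell)$ yields the announced formula. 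The only point requiring care is the orbit-representative bijection of the third step; once the constancy of the cardinality along orbits is available, the remainder is bookkeeping.
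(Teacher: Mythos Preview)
Your argument is correct and follows essentially the same route as the paper: decompose $\BB\nn(\SSS\nn,\ell)$ over templates, group templates into $G_{\SSS\nn}$-orbits indexed by their reduced representatives, and use Proposition~\ref{P:TempRed} to see that $\card{\BB\nn(\SSS\nn,\ell,\cdot)}$ is constant on each orbit. You add a bit more detail than the paper (the check that the action preserves $\TT\nn(\SSS\nn,\ell)$ and the uniqueness of the reduced representative), but the substance is identical.
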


\begin{proof}
We have 
\[
\BB\nn(\SSS\nn,\ell)=\bigsqcup_{t\in\TT\nn(\SSS\nn,\ell)} \BB\nn(\SSS\nn,\ell,t)=\bigsqcup_{t_r\in\RR\nn(\SSS\nn,\ell)} \bigsqcup_{t\in G_{\SSS\nn}\!\star t_r} \BB\nn(\SSS\nn,\ell,t)
\]
Assume $t_r$ is a template of $\RR\nn(\SSS\nn,\ell)$ and $t$ lies in $G_{\SSS\nn}\!\star t_r$. 
Then there exists a $\SSS\nn$-stable bijection $\mu\in G_{\SSS\nn}$ satisfying $t=\mu^T(t_r)$.
It follows from Proposition~\ref{P:TempRed} that the set~$\BB\nn(\SSS\nn,\ell,t)$ has the same cardinality as $\BB\nn(\SSS\nn,\ell,t_r)$. So we obtain
\begin{align*}
s(\BB\nn,\SSS\nn;\ll)&=\card{\BB\nn(\SSS\nn,\ell)}=\sum_{t_r\in\RR\nn(\SSS\nn,\ll)} \sum_{t\in G(\SSS\nn)\star t_r} \card{\BB\nn(\SSS\nn,\ell,t)}\\
&=\sum_{t_r\in\RR\nn(\SSS\nn,\ll)}\card{\BB\nn(\SSS\nn,\ell,t_r)}\cdot\card{G_{\SSS\nn}\!\star t_r}.\qedhere
\end{align*}
\end{proof}

\subsection{Algorithmic improvement}

We now give an improvement of the algorithms of Section~\ref{S:Template} using Corollary~\ref{C:Sum}. 
From Corollary~\ref{C:Sum} we know how to obtain $s(\BB\nn,\SSS\nn;\ell)$ from an enumeration of braids associated to a reduced template.
As in Section~\ref{S:Template} we assume we have a function $\textsc{LoadRed}(\nn,\ell,t)$ loading from the storage memory a representative set of~$\BB\nn(\SSS\nn,\ell,t)$ where $t$ is a reduced template.
We also assume we have a function $\textsc{SaveRed}(\WW,\nn,\ell,t)$ which saves a representative set~$\WW$ of~$\BB\nn(\SSS\nn,\ell,t)$ whenever $t$ is a reduced template.

Enumerating only braids with a reduced template reduces the requirements of storage space.
But there is a little difficulty.
The template $t_x$ used in the call of \textsc{Load} line~6 of Algorithm~\ref{A:TempRepSet} -- \textsc{TempRepSet} is not necessarily reduced.
However, thanks to Proposition~\ref{P:TempRed} we have 
\[
\BB\nn(\SSS\nn,\ell,t)=\overline{g}\inv(\BB\nn(\SSS\nn,\ell,\red_{\SSS\nn}(t))), 
\]
where $g^T(t)=\red_{\SSS\nn}(t)$.
Hence if $\WW_r$ is a representative set of $\BB\nn(\SSS\nn,\ell,\red_{\SSS\nn}(t))$ then $\WW=g\inv(\WW_r)$ is a representative set of $\BB\nn(\SSS\nn,\ell,t)$.
We then obtain Algorithm~\ref{A:Rep2} -- \textsc{LoadFromRed} that can return any representative set of $\BB\nn(\SSS\nn,\ell,t)$ from the storage of braids of $\SSS\nn$-length $\ell$ with a reduced template.

\begin{algorithm}
\caption{-- \small \textsc{LoadFromRed} : Returns a representative set \texttt{W} of $\BB\nn(\SSS\nn,\ell,t)$ from the storage of representative sets of braids of length $\ell$ having a reduced template.}
\label{A:Rep2}
\small
\begin{algorithmic}[1]
\Function{LoadFromRed}{$\nn,\ell,t$}
 \State $t_r\gets \red_{\SSS\nn}(t)$  \Comment{the minimal element of $G_{\SSS\nn}\star t$}
\State determine $g \in G_{\SSS\nn}$ such that $t=g^T(t_r)$
\State $\texttt{\WW}_\texttt{r}\gets \textsc{LoadRed}(\nn,\ell,t_r)$
\State $\texttt{\WW}\gets \emptyset$
\For{$\texttt{\ww}\in \texttt{W}_\texttt{r}$}
    \State $\texttt{\WW}\gets \texttt{\WW}\sqcup\{g\inv(\texttt{\ww})\}$
\EndFor
\State{\textbf{return} $\texttt{\WW}$}
\EndFunction
\end{algorithmic}
\end{algorithm}

Replacing calls of \textsc{Load} by \textsc{LoadRed} and call of \textsc{Save} by \textsc{SaveRed} in Algorithm~\ref{A:TempRepSet}~--~\textsc{TempRepSet} we obtain Algorithm $\textsc{RedTempRepSet}(\ell,t)$ which saves a representative set $\texttt{\WW}_{\ll,t}$ of $\BB\nn(\SSS\nn,\ell,t)$ and returns the pair $(\card{\texttt{\WW}_\ell},\sum_{\uu\in\texttt{\WW}_\ell} \omega_{\SSS\nn}(\overline\uu))$ for every integer $\ell\geq 1$ and every reduced template $t$ of $\RR\nn(\SSS\nn,\ell)$.

By Corollary~\ref{C:Sum} the number $s(\BB\nn,\SSS\nn;\ell)$ can be determined by running Algorithm~\textsc{RedTempRepSet} on all reduced templates of $\TT\nn(\SSS\nn,\ell)$.
As for braids we can't determine reduced templates of $\TT\nn(\SSS\nn,\ell)$ considering only reduced templates of $\TT\nn(\SSS\nn,\llo)$.
Assume we dispose of the set $\RR\nn(\SSS\nn,\llo)$ of reduced templates of~$\TT\nn(\SSS\nn,\llo)$.
First we reconstruct the set $\TT\nn(\SSS\nn,\llo)$ using 
\[
 \TT\nn(\SSS\nn,\llo)=\{ g(t)\ \text{for} (g,t)\in G_{\SSS\nn}\times\RR\nn(\SSS\nn,\llo)\}.
\]
As a second step we use \eqref{E:TempSub} to obtain a supset $\TT\ell'$ of $\TT\nn(\SSS\nn,\ll)$.
Then we filter element of $\TT\ell'$ keeping only reduced templates by testing if a template is minimal in its orbit under the action of $G_{\SSS\nn}$.
Eventually we obtain the set $\RR\nn'(\SSS\nn,\ell)$ of reduced templates containing the reduced templates of $\TT\nn(\SSS\nn,\ell)$.
Moreover a template of~$\RR\nn'(\SSS\nn,\ell)$ is a reduced template of $\TT\nn(\SSS\nn,\ell)$ if and only if there exists a braid~$\br$ of $\BB\nn(\SSS\nn,\ell)$ having this precise template. We then obtain $\RR\nn(\SSS\nn,\ell)$ from  the set~$\RR\nn'(\SSS\nn,\ell)$.
These lead to Algorithm~\ref{A:NumB2}~--~\textsc{RedCombi}, which is an improved version of Algorithm~\ref{A:NumB}~--~\textsc{Combi}.

\begin{algorithm}
\caption{-- \small  \textsc{RedCombi} : Returns a pair of arrays of numbers $(n_\texttt{s},n_\texttt{g})$ satisfying $n_\texttt{s}[\ell]=s(\BB\nn,\SSS\nn;\ell)$ and $n_\texttt{g}[\ell]=g(\BB\nn,\SSS\nn;\ell)$ for all $\ell\leq\ell_{\text{max}}$ }
\label{A:NumB2}
\small
\begin{algorithmic}[1]
\Function{RedCombi}{$\ell_{\text{max}}$}
\State $n_\texttt{s}[0]\gets 1$;\ $n_\texttt{g}[0]\gets 1$
\State $R\gets \{(1_{\Sym\nn}, 0, \ldots, 0)\}$ \Comment{reduced templates of $\TT\nn(\SSS\nn,0)$}
\For{$\ell$ \textbf{from} $1$ \textbf{to} $\ell_{\text{max}}$}
	\State $R'\gets\emptyset$
	\State $n_\texttt{s}[\ell]\gets 0$;\ $n_\texttt{g}[\ell]\gets 0$
	\For{$t_r\in R$}
        \For{$t \in G_{\SSS\nn}\star t_r$}
            \For{$\texttt{x}\in \SSS\nn$}   
                \State $t'\gets t\ast \texttt{x}$
                \If{$t'$ is reduced and $t'\not\in R'$}
                    \State $(n'_\texttt{s},n'_\texttt{g})\gets \textsc{RedTempRepSet}(\ell,t')$
                    \If{$n'_\texttt{s}\not=0$}
                        \State $R'\gets R'\cup\{t'\}$
                        \State $n_\texttt{s}[\ell]\gets n_\texttt{s}[\ell]+n'_\texttt{s}\times\card{G_{\SSS\nn}\!\star t'}$
                        \State $n_\texttt{g}[\ell]\gets n_\texttt{g}[\ell]+n'_\texttt{g}\times\card{G_{\SSS\nn}\!\star t'}$
                    \EndIf
                \EndIf
            \EndFor 
		\EndFor
	\EndFor
	\State $R\gets R'$\Comment{reduced templates of $\TT\nn(\SSS\nn,\ell)$}
\EndFor
\State \textbf{return} $(n_\texttt{s},n_\texttt{g})$
\EndFunction
\end{algorithmic}
\end{algorithm}

\section{Results}
\label{S:Results}
For our experimentations we have coded a distributed version of Algorithm~\ref{A:NumB2}~--~\textsc{RedCombi} following a client / server model.
Roughly speaking the server runs the core of Algorithm~\ref{A:NumB2} while clients run Algorithm~\ref{A:Rep2}~--~\textsc{RedTempRepSet} in parallel. Technical details are voluntarily omitted. 
The source code of our program is available on GitHub \cite{github}.

These programs were executed on a single computational node\footnote{Financed by the project BQR CIMPA 2020 and the laboratory LMPA.} of the computing platform \texttt{CALCULCO}~\cite{Calculco}.
This node is equipped with 256~\texttt{Go} of RAM together with two processors AMD Epyc 7702 with 64 cores each for a total of~128 cores. 
In addition of this computational node we have used a distributed storage space of~30~\texttt{To} storing files containing representative sets.

\subsection{Three strands}

As values of $\mathcal{S}(\BB3,\ABP3)$ and $\mathcal{G}(\BB3,\ABP3)$ are already known since the work of L. Sabalka \cite{Sabalka} we have started our experimentation on the dual presentation of $\BB3$ (see Table~\ref{T:B3G}).

\begin{table}[h]

\footnotesize
 \begin{tabular}{r|r|r}
  $\ell$&$s(\BB3,\ABKL3;\ell)$& $g(\BB3,\ABKL3;\ell)$\\
  \hline
  0 & 1 & 1\\
  1 & 6 & 6\\
  2 & 20 & 30\\
  3 & 54 & 126\\
  4 & 134 & 498\\
  5 & 318 & 1\,926\\
  6 & 734 & 7\,410\\
  7 & 1\,662 & 28\,566\\
  8 & 3\,710 & 110\,658\\
  9 & 8\,190 & 431\,046\\
  10 & 17\,918 & 1\,687\,890\\
 \end{tabular}
 \qquad
 \begin{tabular}{r|r|r}
  $\ell$&$s(\BB3,\ABKL3;\ell)$& $g(\BB3,\ABKL3;\ell)$\\
  \hline
  11 & 38\,910 & 6\,639\,606 \\
  12 & 83\,966 & 26\,216\,418 \\
  13 & 180\,222 & 103\,827\,366\\
  14 & 385\,022 & 412\,169\,970\\
  15 & 819\,198 & 1\,639\,212\,246\\
  16 & 1\,736\,702 & 6\,528\,347\,778\\
  17 & 3\,670\,014 & 26\,027\,690\,886\\
  18 & 7\,733\,246 & 103\,853\,269\,650\\
  19 & 16\,252\,926 & 414\,639\,810\,486\\
  20 & 34\,078\,718 & 1\,656\,237\,864\,738\\
  21 & 71\,303\,166 & 6\,617\,984\,181\,606
 \end{tabular}
 \vspace{1em}
 \caption{\label{T:B3G} Combinatorics of $\BB3$ relatively to dual generators $\ABKL3$.}
\end{table}

Using Padé approximant on obtained values we can  conjecture rational expression for the spherical and geodesic growth series of $\BB3$ relatively to dual generators.

\begin{conj}
 The spherical and geodesic growth series of $\BB3$ relatively to dual generators are
\begin{equation*}
\mathcal{S}(\BB3,\ABKL3)=\frac{(t+1)(2t^2-1)}{(t-1)(2t-1)^2},
\qquad
\mathcal{G}(\BB3,\ABKL3)=\frac{12t^3-2t^2+3t-1}{(2t-1)(3t-1)(4t-1)}.
\end{equation*}
\end{conj}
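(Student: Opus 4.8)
The plan is to upgrade each conjectured identity to a theorem by exhibiting, for $\BB3$ with the dual generating set $\ABKL3$, a regular language and an accepting automaton whose length generating function equals the claimed rational fraction; this follows the strategy Sabalka used for the Artin case. The structural input I would rely on is the transparency of the dual presentation of $\BB3$: writing $a=\aa12$, $b=\aa23$, $c=\aa13$, the only defining relations are $ab=bc=ca$ together with the inverse relations \eqref{E:DualInverses}, so the common product $\delta=\delta_3=ab=bc=ca$ satisfies $\delta^3=\Delta_3^2$ and is central, and $\BB3/\langle\delta^3\rangle\simeq PSL_2(\ZZ)\simeq \ZZ/2\ZZ\ast\ZZ/3\ZZ$. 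I would use this to bound the amount of information an element carries that can influence future geodesy, namely its $\delta$-exponent together with bounded template data modulo $\langle\delta^3\rangle$.

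For the spherical series, abbreviate $s_\ell=\sph{\BB3}{\ABKL3}\ell$. I would first fix a system of unique geodesic $\ABKL3$-normal forms for the elements of $\BB3$, coordinatising a braid as a central power $\delta^{3k}$ times a coset representative and tracking how $\len{\ABKL3}{\cdot}$ splits along this decomposition. Building a deterministic automaton that outputs exactly one geodesic word per element then yields rationality of $\mathcal{S}(\BB3,\ABKL3)$, and reading off its transfer matrix gives the fraction. Equivalently, and more robustly to verify, I would aim to prove the order-three recurrence $s_\ell=5s_{\ell-1}-8s_{\ell-2}+4s_{\ell-3}$ valid for $\ell\geq4$, whose characteristic polynomial $(x-1)(x-2)^2$ matches the denominator $(t-1)(2t-1)^2$; such a recurrence should come from a length-preserving correspondence between $\BB3(\ABKL3,\ell)$ and smaller spheres indexed by the three directions around $\delta$.

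For the geodesic series, abbreviate $g_\ell=\geo{\BB3}{\ABKL3}\ell$. Here I would build an automaton accepting every geodesic $\ABKL3$-word, not merely one per element. The essential ingredient is a local geodesy criterion: by Lemma~\ref{L:Geo}, a word $\uu\xx$ with $\uu$ geodesic fails to be geodesic exactly when $\overline{\uu\xx}$ has a representative two letters shorter, and this event is detectable from the template $\tau(\overline\uu)$ of Definition~\ref{D:Template} and the update rule $\tau\ast x$. I would take as automaton states the finitely many relevant packets of template data modulo $\langle\delta^3\rangle$, with transitions recording which letters may legally follow a geodesic; its generating function should be $\mathcal{G}(\BB3,\ABKL3)$, and in particular produce the recurrence $g_\ell=9g_{\ell-1}-26g_{\ell-2}+24g_{\ell-3}$ with characteristic polynomial $(x-2)(x-3)(x-4)$ matching $(2t-1)(3t-1)(4t-1)$.

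The hard part is proving the two automata correct rather than guessing them. One must show the state sets are genuinely finite and closed under the $\ABKL3$-action, that the geodesy event really is determined by the retained template data so that no strictly shorter representative is ever overlooked, and, for the spherical series, that each braid is counted exactly once. Finiteness is exactly where centrality of $\delta^3$ and the eventual linearity of $\len{\ABKL3}{\cdot}$ in the $\delta$-exponent are needed, to bound the template information that can affect later geodesy. Once finiteness and correctness are in hand, matching the transfer-matrix generating functions to the conjectured fractions is routine, and the agreement with the twenty-one computed coefficients of Table~\ref{T:B3G} serves as an independent numerical check.
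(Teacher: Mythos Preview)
The statement you are addressing is a \emph{conjecture} in the paper, not a theorem: the paper offers no proof whatsoever. The rational expressions are obtained purely experimentally, by running the algorithms of Sections~\ref{S:Counting}--\ref{S:Reduced} to produce the values in Table~\ref{T:B3G} and then fitting a Pad\'e approximant. There is no automaton, no recurrence argument, and no claim of rigour; the paper explicitly leaves this open.

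Your proposal is therefore not comparable to the paper's argument, because there is none. On its own merits, what you have written is a reasonable \emph{strategy outline} in the spirit of Sabalka's work on $\ABP3$, but it is not a proof: you correctly identify the hard steps (finiteness of the state set, correctness of the geodesy test from template data, uniqueness of representatives for the spherical series) and then explicitly defer them. In particular, the assertion that geodesy of $\uu\xx$ is detectable from the template $\tau(\overline\uu)$ alone is exactly the point that requires work---templates as defined in the paper take values in the infinite set $\Sym3\times\ZZ^3$, and you would need to prove that only a bounded window of this data matters, which is where your appeal to centrality of $\delta^3$ would have to be made precise. Until those steps are carried out, the conjecture remains open, just as the paper states.
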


If the previous conjecture is true the growth rate of $s(\BB3,\ABKL3;\ell)$ is $2$ while that of $g(\BB3,\ABKL3;\ell)$ is $4$.

\subsection{Four strands}

In her thesis \cite{Albenque-thesis}, M. Albenque computes the value $s(\BB4,\ABP4;\ell)$ up to $\ell\leq12$.
Running our algorithm on the $128$-cores node of the CALCULCO platform we determine the spherical and geodesic combinatorics of $\BB4$ relatively to Artin's generators up to length $25$ (see Table~\ref{T:B4A}).
\begin{table}[h]
\footnotesize
 \begin{tabular}{r|r|r}
  $\ell$&$s(\BB4,\ABP4;\ell)$&$g(\BB4,\ABP4;\ell)$\\
  \hline
  0&1&1\\
  1&6&6\\
  2&26&30\\
  3&98&142\\
  4&338&646\\
  5&1\,110&2\,870\\
  6&3\,542&12\,558\\
  7&11\,098&54\,026\\
  8&34\,362&229\,338\\
  9&105\,546&963\,570\\
  10&322\,400&4\,016\,674\\
  11&980\,904&16\,641\,454\\
  12&2\,975\,728&68\,614\,150
  \end{tabular}
  \qquad
   \begin{tabular}{r|r|r}
  $\ell$&$s(\BB4,\ABP4;\ell)$&$g(\BB4,\ABP4;\ell)$\\
  \hline
  13&9\,007\,466&281\,799\,158\\
  14&27\,218\,486&1\,153\,638\,466\\
  15&82\,133\,734&4\,710\,108\,514\\
  16&247\,557\,852&19\,186\,676\,438\\
  17&745\,421\,660&78\,004\,083\,510\\
  18&2\,242\,595\,598&316\,591\,341\,866\\
  19&6\,741\,618\,346&1\,283\,041\,428\,650\\
  20&20\,252\,254\,058&5\,193\,053\,664\,554\\
  21&60\,800\,088\,680&20\,994\,893\,965\,398\\
  22&182\,422\,321\,452&84\,795\,261\,908\,498\\
  23&547\,032\,036\,564&342\,173\,680\,884\,002\\
  24&1\,639\,548\,505\,920&1\,379\,691\,672\,165\,334\\
  25&4\,911\,638\,066\,620&5\,559\,241\,797\,216\,166\\
  \end{tabular}
   \vspace{1em} 
  
  \caption{\label{T:B4A} Combinatorics of $\BB4$ relatively to Artin's generators $\ABP4$.}
  \label{T:B4Artin}
  \end{table}
  Unfortunately the obtained values do not allow us to guess a rational expression of $\mathcal{S}(\BB4,\ABP4)$ or of $\mathcal{G}(\BB4,\ABP4)$.
  For information the storage of all braids of $\BB4$ with geodesic $\ABP4$-length $\leq 25$ and reduced templates requires $26$ \texttt{To} of disk space.
  
In case of dual generators we have reached length $17$ (see Table~\ref{T:B4D}).
\begin{table}[h]
\footnotesize
\begin{tabular}{r|r|r}
  $\ell$&$s(\BB4,\ABKL4;\ell)$&$g(\BB4,\ABKL4;\ell)$\\
  \hline
  0&1&1\\
  1&12&12\\
  2&84&132\\
  3&478&1\,340\\
  4&2\,500&12\,788\\
  5&12\,612&117\,452\\
  6&62\,570&1\,053\,604\\
  7&303\,356&9\,311\,420\\
  8&1\,506\,212&81\,488\,628
\end{tabular}
\qquad
 \begin{tabular}{r|r|r}
  $\ell$&$S(\BB4,\ABKL4;\ell)$&$g(\BB4,\ABKL4;\ell)$\\
  \hline
  9&7\,348\,366&708\,368\,540\\
  10&35\,773\,324&6\,128\,211\,364\\
  11&173\,885\,572&52\,826\,999\,612\\
  12&844\,277\,874&454\,136\,092\,148\\
  13&4\,095\,929\,948&3\,895\,624\,824\,092\\
  14&19\,858\,981\,932&33\,359\,143\,410\,468\\
  15&96\,242\,356\,958&285\,259\,736\,104\,444\\
  16&466\,262\,144\,180&2\,436\,488\,694\,821\,748\\
  17&2\,258\,320\,991\,652&20\,790\,986\,096\,580\,060
  \end{tabular}
  \vspace{1em}
  \caption{\label{T:B4D} Combinatorics of $\BB4$ relatively to dual generators $\ABKL4$.}
\end{table}
Using Padé approximant on our values we can conjecture the value of the spherical growth series of $\BB4$ relatively to dual generators.

\begin{conj}
The spherical growth series of $\BB4$ relatively to dual generators is
\begin{equation}
\label{E:SB4}
\mathcal{S}(\BB4,\ABKL4)=-\frac{(t+1)(10t^6-10t^5-3t^4+11t^3-4t^2-3t+1)}{(t-1)(5t^2-5t+1)(10t^4-20t^3+19t^2-8t+1)}
\end{equation}
\end{conj}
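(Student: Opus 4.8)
The statement is a conjecture, so a genuine proof must first secure the \emph{rationality} of $\mathcal{S}(\BB4,\ABKL4)$, which---unlike $\mathcal{S}(\BB\nn,D_n)$, rational by \cite{Charney,DehornoyGarside}---is not known a priori for the atomic dual generators. The plan is to follow Sabalka's strategy for $\BB3$ \cite{Sabalka}: exhibit a regular language $L$ of \emph{unique} geodesic $\ABKL4$-words in bijection with $\BB4$, so that, as noted just after the definition of the spherical series, $\mathcal{S}(\BB4,\ABKL4)$ becomes the generating function counting the words of $L$ by length, hence the rational function read off from the transfer matrix of the deterministic automaton recognizing $L$. Once such an automaton is in hand the conjectured expression is checked by a finite computation: the determinant $\det(I-tA)$ of its transfer matrix $A$ yields the denominator, the numerator is recovered from finitely many initial terms, and equality with \eqref{E:SB4} reduces to a routine polynomial identity.

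To build and certify the automaton I would reuse the template machinery of Sections~\ref{S:Template} and~\ref{S:Reduced}. The data already computed are precisely the reduced representative sets $\BB4(\ABKL4,\ell,t)$ for $\ell\le 17$; the candidate automaton carries one state per reachable reduced template together with the canonical-representative bookkeeping, and its transitions are governed by the geodesity test of Lemma~\ref{L:Geo}: appending a letter $x$ keeps a word geodesic unless its class already appears at length $\ell-2$. The crucial point to isolate is that beyond a bounded length no genuinely new states arise; concretely, that the linking-number coordinates $\ell_{i,j}$---a priori unbounded---factor through a \emph{finite} congruence compatible with the template action $\ast$ of $\ABKL4$ and with the symmetry group $G_{\ABKL4}$ (Corollary~\ref{C:Sum}). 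Establishing this finiteness, equivalently the regularity of the geodesic language, is the substantive step.

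A weaker but immediate consistency result needs no regularity. If one merely \emph{grants} that $\mathcal{S}(\BB4,\ABKL4)$ is rational with denominator of degree at most $7$---equivalently, that $s(\BB4,\ABKL4;\ell)$ satisfies a linear recurrence of order $7$---then the eighteen values of Table~\ref{T:B4D} over-determine the fifteen free coefficients of such a fraction, so Padé approximation pins down \eqref{E:SB4} uniquely and the three surplus terms act as independent checks. This is exactly the heuristic that produced the conjecture; upgrading it to a theorem only requires the recurrence to be shown to persist for every $\ell$.

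The main obstacle is this regularity step. For $\BB3$ the automaton is small enough to exhibit by hand, but with twelve dual generators the reachable template set for $\BB4$ is far larger, the invariants $\ell_{i,j}$ are unbounded, and no general theorem forces geodesic automaticity in this metric. The realistic route is therefore semi-computational: verify, through the machinery of Algorithm~\ref{A:NumB2}, that the set of reachable reduced templates together with their canonical transitions stabilizes up to some length, and then prove that stability propagates---most plausibly by producing an explicit bounded invariant refining the template that is preserved under right multiplication by $\ABKL4$. Absent such an invariant, the expression \eqref{E:SB4} remains, as stated, a conjecture.
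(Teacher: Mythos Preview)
The statement is a \emph{conjecture}, and the paper offers no proof whatsoever: the entirety of its justification is the heuristic you describe in your third paragraph---the eighteen computed values of Table~\ref{T:B4D} are fed to a Pad\'e approximant, the rational expression~\eqref{E:SB4} pops out, and the surplus terms are consistent with it. You correctly identify this, and you correctly conclude that absent a regularity argument the expression remains conjectural. In that sense your proposal is accurate and there is nothing in the paper to compare it against.

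What you add beyond the paper is a speculative roadmap (a Sabalka-style regular language of unique geodesic $\ABKL4$-words, built from the template data and certified by a finiteness/stabilization argument). This is a reasonable outline of what a proof \emph{might} look like, but be aware that it is not a proof attempt in the usual sense: the substantive step---showing that the linking-number coordinates factor through a finite congruence compatible with the $\ast$-action, or more generally that the geodesic language is regular---is precisely the open problem, and you do not indicate how to carry it out. So your write-up is best read as a correct diagnosis of the difficulty together with a plausible but unexecuted plan, not as a proof that can be checked against the paper's own argument (which does not exist).
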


If the previous conjecture is true, the growth rate of $s(\BB4,\ABKL4;\ell)$ is given by the inverse of the maximal root of the denominator of \eqref{E:SB4}, which is approximatively~$4.8$.
 Unfortunately we are not able to formulate such a conjecture for the geodesic growth series of $\BB4$ relatively to dual generators.

 \vspace{1em}
\noindent \textbf{Acknowledgments.} The author wishes to thank the anonymous referee for his/her very sharp comments.

\bibliographystyle{plain}
\bibliography{biblio}

\end{document}